\numberwithin{equation}{section}
\numberwithin{figure}{section}
\newcommand\blfootnote[1]{%
  \begingroup
  \renewcommand\thefootnote{}\footnote{#1}%
  \addtocounter{footnote}{-1}%
  \endgroup
}
\newtheorem{thm}{Theorem}[section]
\newtheorem{lemma}{Lemma}[section]
\newtheorem{rem}{Remark}[section]
\newcommand{\commentout}[1]{{}} 
\newcommand{\abs}[1]{\left|#1\right|}
\newcommand{\norm}[1]{\left\|#1\right\|}
\newcommand{\bfb}{{\bf b}}
\newcommand{\bfc}{{\bf c}}
\newcommand{\bfn}{{\bf n}}
\newcommand{\bfdelta}{\boldsymbol{\delta}}
\newcommand{\bfgamma}{\boldsymbol{\gamma}}
\newcommand{\bfLambda}{\boldsymbol{\Lambda}}
\begin{document}
\title{A Group of Immersed Finite Element Spaces \\
For Elliptic Interface Problems \thanks{This research was partially supported by the National Science Foundation (Grant Number DMS 1016313)}
\blfootnote{Keywords: Interface problems, discontinuous coefficients, finite element spaces, Cartesian mesh.}
\blfootnote{2010 Mathematics Subject Classification: 35R05, 65N30, 97N50.}}
\author{Ruchi Guo and Tao Lin\\
(ruchi91@vt.edu and tlin@vt.edu) \\
Department of Mathematics \\
Virginia Tech }
\date{}
\maketitle

\begin{abstract}
We present a unified framework for developing and analyzing immersed finite element (IFE) spaces for solving typical elliptic interface problems
 with interface independent meshes. This framework allows us to construct a group of new IFE spaces with either linear, or bilinear, or the rotated-$Q_1$ polynomials. Functions in these IFE spaces are locally piecewise polynomials defined according to the sub-elements formed by the interface itself instead of its line approximation. We show that the unisolvence for these IFE spaces follows from the invertibility of the Sherman-Morrison matrix. A group of estimates and identities are established for the interface geometry and shape functions that are applicable to all of these IFE spaces. These fundamental preparations enable us to develop a unified multipoint Taylor expansion procedure for proving that these IFE spaces have the expected optimal approximation capability according to the involved polynomials.
\end{abstract}


\section{Introduction}

This article presents a unified framework for developing and analyzing a group of immersed finite element (IFE) spaces that use interface independent meshes (such as highly structured \textrm{Cartesian} meshes) to solve interface problems of the typical second order elliptic partial differential equations:
\begin{align}
\label{eq1_1}
 -\nabla\cdot(\beta\nabla u)=f, \;\;\;\; & \text{in} \; \Omega^-  \cup \Omega^+, \\
 u=g, \;\;\;\; &\text{on} \; \partial\Omega,
\end{align}
where, without loss of generality, the domain $\Omega\subseteq \mathbb{R}^2$ is separated by an interface curve $\Gamma$ into two subdomains $\Omega^+$ and $\Omega^-$, the diffusion coefficient $\beta(X)$ is discontinuous such that
\begin{equation*}
\beta(X)=
\left\{\begin{array}{cc}
\beta^- & \text{if} \; X\in \Omega^- ,\\
\beta^+ & \text{if} \; X\in \Omega^+ ,
\end{array}\right.
\end{equation*}
where $\beta^{\pm}$ are positive constants. In addition, the solution $u$ is assumed to satisfy the jump conditions:
\begin{align}
[u]_{\Gamma} &= 0, \label{eq1_3} \\
\big[\beta \nabla u\cdot \mathbf{n}\big]_{\Gamma} &= 0, \label{eq1_4}
\end{align}
where $\mathbf{ n}$ is the unit normal vector to the interface $\Gamma$, and for every piece-wise function $v$ defined as
\begin{equation*}
v=
\left\{\begin{array}{cc}
v^-(X) & \text{if} \; X\in \Omega^- , \\
v^+(X) & \text{if} \; X\in \Omega^+ ,
\end{array}\right.
\end{equation*}
we adopt the notation $[v]|_{\Gamma}=v^+|_{\Gamma}-v^-|_{\Gamma}$.

\par
It is well known that the standard finite element method can be applied to interface problems provided that the mesh is formed according to the interface, see \cite{1970Babuska,1996BrambleKing,1998ChenZou} and references therein. Many efforts have been made to develop alternative finite element
methods based on unfitted meshes for solving interface problems. The advantages of using unfitted meshes are discussed in
\cite{2010Bordas,2014MoumnassiFigueredo,2011MohammedSalim,2013NadalAlbelda}. A variety of finite element methods that can use interface independent meshes to solve interface problems have been reported in the literature, see
\cite{2003BabuskaBanerjeeOsborn,
1998BabuskaZhang,
1987BarrettElliott,
2009BastianEngwer,
2001DolbowMoesBelytschko,2009EfendievHou,2009GuyomarchLeeJeon,2015GuzmanSanchezSarkisMathComp,1996BabuskaMelenk} for a few examples. In particular,
instead of modifying the shape functions on interface elements which is an approach to be discussed in this article, methods in \cite{2015BurmanClaus,2016BurmanGuzmanSanchezSarkisHcontrast,2002HansboHansbo,2016WangXiaoXu} employ standard finite element functions but use the Nitsche's penalty along the interface in the finite element schemes.

\commentout{
{\color{red}In the literature, alternative finite element methods have been developed for using unfitted meshes. For example, in stead of modifying the shape functions on interface elements, methods in \cite{2015BurmanClaus,2016BurmanGuzmanSanchezSarkisHcontrast,2002HansboHansbo,2016WangXiaoXu}
employ the Nitsche's penalty along the interface in the finite element schemes. Other finite element methods that can use interface independent meshes to solve interface problems can be found in
\cite{2003BabuskaBanerjeeOsborn,
1998BabuskaZhang,
1987BarrettElliott,
2009BastianEngwer,
2001DolbowMoesBelytschko,2009EfendievHou,2009GuyomarchLeeJeon,2015GuzmanSanchezSarkisMathComp,1996BabuskaMelenk}. The unfitted mesh is also advantageous when the boundary geometry of the computation is complicated or moving during the computation \cite{2014MoumnassiFigueredo,2011MohammedSalim,2013NadalAlbelda}. In addition, the authors in \cite{2010Bordas} review different approaches aiming at reducing the mesh burden in computation.}
}

This article focuses on the immersed finite element (IFE) methods, whose basic idea was introduced in \cite{1998Li}, for those applications where it is preferable to solve interface problems with a mesh independent of the interface,
for example, the Particle-In-Cell method for plasma particle simulations \cite{2005KafafyWangLin,2007KafafyWang,2002LinWang}, the problems with moving interfaces
\cite{2013HeLinLinZhang,2013LinLinZhang1}, and the electroencephalography forward problem \cite{2010VallaghePapadopoulo}. IFE methods for interface problems of other types of partial differential equations can be found in \cite{2015AdjeridChaabaneLin,2013HeLinLinZhang,2012HouLiWangWang,2005LiYang,2011LinLinSunWang,2013LinLinZhang1,2013LinSheenZhang,2016KihyoMoon_thesis,2003YangLiLi}.

The IFE spaces developed in this article are extended from the IFE spaces constructed with linear polynomials \cite{2007GongLiLi,2010KwakWeeChang,2004LiLinLinRogers,2003LiLinWu}, bilinear polynomials
\cite{2009HeTHESIS,2008HeLinLin,2001LinLinRogersRyan}, and rotate-$Q_1$ polynomials \cite{2013LinSheenZhang,2013ZhangTHESIS} using the standard Lagrange type local degrees of freedom imposed either
at element vertices as usual or at midpoints of element edges in the Crouzeix-Raviart way \cite{1973CrouzeixRaviart}. We note that, the local linear IFE space on each triangular interface element constructed here with the Lagrange local degrees of freedom imposed at vertices is very similar to the one recently introduced in \cite{2015GuzmanSanchezSarkisP1}.
The IFE spaces in this article are new because, locally on each interface element, they are Hsieh-Clough-Tocher type macro finite element functions \cite{2001Braess,1966CloughTocher} defined with sub-elements formed by the interface curve itself in contrast to those IFE spaces in the literature defined with sub-elements formed by a straight line approximating the interface curve.

Our research presented here is motivated by two issues. The first issue concerns the general $O(h^2)$ order accuracy for a line to approximate a curve which is a fundamental ingredient for the optimal approximation capability of those IFE spaces in the literature. We hope the study of IFE spaces based on curve sub-elements can shed light on the development of higher degree IFE spaces for which the $O(h^2)$ order is not sufficient. For examples, those techniques incorporating the exact geometry for constructing basis functions \cite{2012LianBordas,2015NguyenAnitescu,2008SevillaSoniaHuerta} may be considered. The second issue is the attempt to unify the fragmented framework for developing and analyzing the IFE spaces in the literature. For IFE spaces based on different meshes, different polynomials, and different local degrees of freedom, we show that their unisolvence, i.e.,
the existence and uniqueness of IFE shape functions, can be established through a uniform procedure related with the invertibility of the Sherman-Morrison matrix. We have derived a group of identities for the interface geometry and shape functions that are applicable to all of these IFE spaces, and this enables us to derive error estimates for the interpolation in these new IFE spaces in a general unified multipoint Taylor expansion approach in which, IFE functions defined according to the given interface actually simplify the analysis because we only need to apply the same arguments to two sub-elements formed by the interface while the analysis for the IFE spaces in the literature has to use a different set of arguments to handle the sub-elements sandwiched between the interface curve and its approximate line.
Also, inspired by \cite{2015GuzmanSanchezSarkisP1}, we have made an effort to show how the error bounds explicitly depend on the maximum curvature of the interface curve and the ratio between $\beta^-$ and $\beta^+$, which are two important problem dependent characteristics effecting the approximation capability of IFE spaces. We note that the dependence of constants in the error bounds on the ratio between $\beta^-$ and $\beta^+$ in our article is similar to the one discussed in \cite{2010ChuGrahamHou}, and we think this coincidence follows from the fact that we analyze the approximation capability of
finite element spaces with Lagrange type degrees of freedom.


This article consists of 5 additional sections. In the next section we describe common notations and some basic assumptions used in this article. In Section 3, we derive estimates and identities associated with the interface and the jump conditions in an element. From these estimates, we can see how their bounds explicitly depend on
curvature of the interface and the ratio between $\beta^-$ and $\beta^+$, and how the mesh size $h$ is subject to the interface curvature.
In Section 4 we present generalized multipoint Taylor expansions for piecewise $C^2$ functions in an interface element. Estimates for the remainders in these expansions are derived in terms of pertinent Sobolev norms. In Section 5, first, we establish the unisolvence of immersed finite element functions constructed with linear, bilinear, Crouzeix-Raviart and rotated-$Q_1$ polynomials, i.e., we show the standard Lagrange local degrees of freedom imposed at the nodes of an interface element can uniquely determine an IFE function that satisfies the interface jump conditions in a suitable approximate sense. Then, we show that the IFE shape functions have several desirable properties such as the partition of unity and the critical identities in Theorem \ref{thm4_3}.
Finally, with a unified analysis, we show that the IFE spaces have the expected optimal approximation capability. In Section 6, we demonstrate features of these IFE spaces by numerical examples.

\section{Preliminaries}
Throughout the article, $\Omega\subset\mathbb{R}^2$ denotes a bounded domain as a union of finitely many rectangles. The interface curve $\Gamma$ separates $\Omega$ into two subdomains $\Omega^+$ and $\Omega^-$ such that $\overline{\Omega} = \overline{\Omega^+} \cup \overline{\Omega^-} \cup \Gamma$. For every measurable subset $\tilde \Omega \subseteq \Omega$, let $W^{k,p}(\tilde \Omega)$ be the standard Sobolev spaces on $\tilde \Omega$ associated with the norm $\|\cdot\|_{k,p,\tilde \Omega}$ and the semi-norm $|v|_{k,p,\tilde \Omega}=\|D^{\alpha}v\|_{0,p,\tilde \Omega}$, for $|\alpha|=k$. The corresponding Hilbert space is $H^k(\tilde \Omega)=W^{k,2}(\tilde \Omega)$. When
$\tilde \Omega^s = \tilde \Omega \cap \Omega^s \not = \emptyset, s = \pm$, we let
\begin{equation*}
PH^k_{int}(\tilde \Omega)=\{ u: u|_{\tilde{\Omega}^s}\in H^k(\tilde \Omega^s),\; s=\pm; \; [u]_\Gamma=0 \; \text{and} \; [\beta\nabla u\cdot \mathbf{n}]_\Gamma=0\; \text{on}\; \Gamma \cap \tilde \Omega \},
\end{equation*}
\begin{equation*}
PC^k_{int}(\tilde \Omega)=\{ u: u|_{\tilde \Omega^s}\in C^k(\tilde \Omega^s),\; s=\pm; \; [u]_\Gamma=0 \; \text{and} \; [\beta\nabla u\cdot \mathbf{n}]_{\Gamma}=0\; \text{on}\; \Gamma \cap \tilde \Omega \}.
\end{equation*}
The norms and semi-norms to be used on $PH^k_{int}(\tilde \Omega)$ are
\begin{equation*}
\|\cdot\|^2_{k,\tilde \Omega}=\|\cdot\|^2_{k,\tilde \Omega^+}+\|\cdot\|^2_{k,\tilde \Omega^-}, \;\;\;\;\; |\cdot|^2_{k,\tilde \Omega}=|\cdot|^2_{k,\tilde \Omega^+}+|\cdot|^2_{k,\tilde \Omega^-},
\end{equation*}
\begin{equation*}
\|\cdot\|_{k,\infty,\tilde \Omega}=\max(\|\cdot\|_{k,\infty,\tilde \Omega^+} \;,\; \|\cdot\|_{k,\infty,\tilde \Omega^-}), \;\;\;\;\; |\cdot|_{k,\infty,\tilde \Omega}=\max(|\cdot|_{k,\infty,\tilde \Omega^+} \;,\; |\cdot|_{k,\infty,\tilde \Omega^-}).
\end{equation*}

Let $\mathcal{T}_h$ be a Cartesian triangular or rectangular mesh of the domain $\Omega$ with the maximum length of edge $h$. An element $T\in \mathcal{T}_h$ is called an interface element provided the interior of $T$ intersects with the interface $\Gamma$; otherwise, we name it a non-interface element. We let $\mathcal{T}^i_h$ and $\mathcal{T}^n_h$ be the set of interface elements and non-interface elements, respectively. Similarly, $\mathcal{E}^i_h$ and $\mathcal{E}^n_h$ are sets of interface edges and non-interface edges, respectively. In addition, as in \cite{2009HeLinLin}, we assume that $\mathcal{T}_h$ satisfies the following hypotheses when the mesh size $h$ is small enough:

\begin{itemize}[leftmargin=30pt]
  \item [\textbf{(H1)}] The interface $\Gamma$ cannot intersect an edge of any element at more than two points unless the edge is part of $\Gamma$.
  \item [\textbf{(H2)}] If $\Gamma$ intersects the boundary of an element at two points, these intersection points must be on different edges of this element.
  \item [\textbf{(H3)}] The interface $\Gamma$ is a piecewise $C^2$ function, and the mesh $\mathcal{T}_h$ is formed such that the subset of $\Gamma$ in every interface element $T\in\mathcal{T}^i_h$ is $C^2$.
  \item [\textbf{(H4)}] The interface $\Gamma$ is smooth enough so that $PC^2_{int}(T)$ is dense in $PH^2_{int}(T)$ for every interface element $T\in\mathcal{T}^i_h$.
\end{itemize}

On an element $T \in \mathcal{T}_h$, we consider the local finite element space $(T, \Pi_T, \Sigma_T^P)$ with
\begin{eqnarray}
\Pi_T &=& \begin{cases}
\textrm{Span} \{ 1,x,y\}, & \text{for $P_1$ or Crouzeix-Raviart (C-R) finite element functions}, \\
\textrm{Span} \{ 1,x,y,xy\}, & \text{for $Q_1$ (bilinear) finite element functions}, \\
\textrm{Span} \{ 1,x,y,x^2-y^2 \}, & \text{for rotated-$Q_1$ finite element functions},
\end{cases} \label{eq4_1}\\
\Sigma_T^P &=& \{ \psi^P_T(M_i): i \in \mathcal{I}, \; \forall \psi^P_T \in \Pi_T \}, \label{eq4_2}
\end{eqnarray}
where $\mathcal{I} = \{1, 2, \cdots, DOF(T)\}$, $DOF(T) = 3$ or $4$ depending on whether $T$ is triangular or rectangular, $M_i, i\in \mathcal{I}$ are the local nodes to determine
shape functions on $T$, and the super script $P$ is to emphasize the Lagrange type degrees of freedom imposed at the points $M_i$s. For $P_1$ and $Q_1$ finite elements, $M_i = A_i, i\in \mathcal{I}$ where $A_i$'s are vertices of $T$. For C-R and rotated-$Q_1$ finite elements,
$M_i$ is the midpoint of the $i$-th edge of $T$ for $i\in \mathcal{I}$. It is well known \cite{1994BrennerScott,1978Ciarlet, 1973CrouzeixRaviart,1992RanacherTurek} that
$(T, \Pi_T, \Sigma_T^P)$ has a set of shape functions $\psi_i^P(X), i\in \mathcal{I}$ such that
\begin{equation}
\label{eqn_psi_iT_values_bounds}
\psi^P_{i,T}(M_j)=\delta_{ij},~~\norm{\psi^P_{i,T}}_{\infty, T} \leq C,~~\norm{\nabla \psi^P_{i,T}}_{\infty, T} \leq Ch^{-1}, \;\; i,j\in \mathcal{I},
\end{equation}
where $\delta_{ij}$ is the \textit{Kronecker} delta function.

Throughout this article, without loss of generality, we assume that $\beta^+\geqslant\beta^-$ and let $\rho=\beta^-/\beta^+\leqslant1$. In addition, on any $T\in \mathcal{T}^i_h$, we use $D$, $E$ to denote the intersection points of $\Gamma$ and $\partial T$ and let $l$ be the line connecting $DE$.


\section{Geometric Properties of the Interface}

In this section, we discuss geometric properties on interface elements that are useful for developing and analyzing IFE spaces. Let $T$ be an interface element.
As illustrated in Figure \ref{fig2_2}, for a point $\widetilde{X}$ on $\Gamma\cap T$, we let
$\mathbf{n}(\widetilde{X})=(\tilde{n}_x(\widetilde{X}), \tilde{n}_y(\widetilde{X}))$ be the normal of $\Gamma$ at $\widetilde{X}$, and we let $\widetilde{X}_{\bot}\in l$ be the orthogonal projection of $\widetilde{X} \in \Gamma\cap T$ onto the line $l$. Also, for the line $l$, we let $\bar{\mathbf{n}}=(\bar{n}_x,\bar{n}_y)$ be its unit normal vector and, consequently, $\bar{\mathbf{t}}=(\bar{n}_y, -\bar{n}_x)$ is the vector tangential to $l$. Without loss of generality, we assume the orientation of all the normal vectors are from $T^-$ to $T^+$. In addition, we let $\kappa$ be the maximum curvature of the curve $\Gamma$.

\begin{figure}[H]
\centering
\begin{minipage}[t]{0.4\textwidth}
\centering
\includegraphics[width=2.5in]{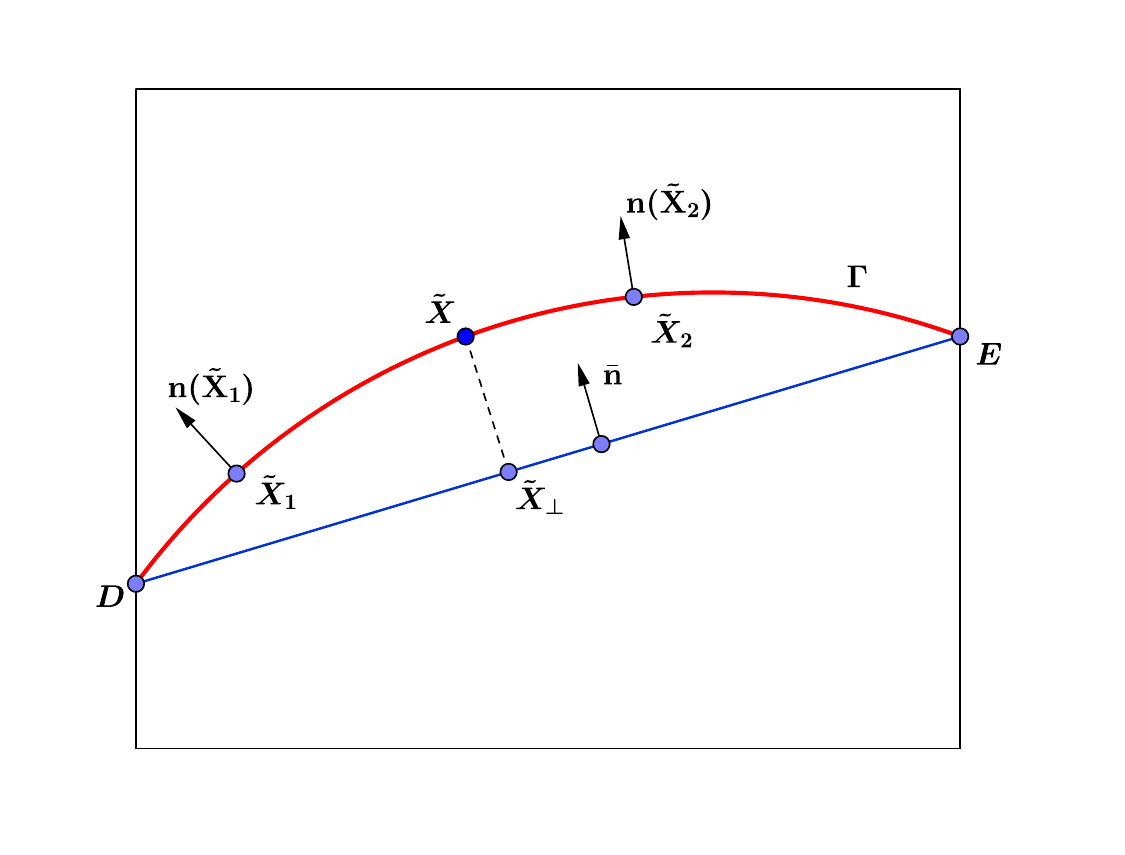}
\caption{ The local system }
\label{fig2_2}
\end{minipage}
\begin{minipage}[t]{0.5\textwidth}
\centering
\includegraphics[width=2.5in]{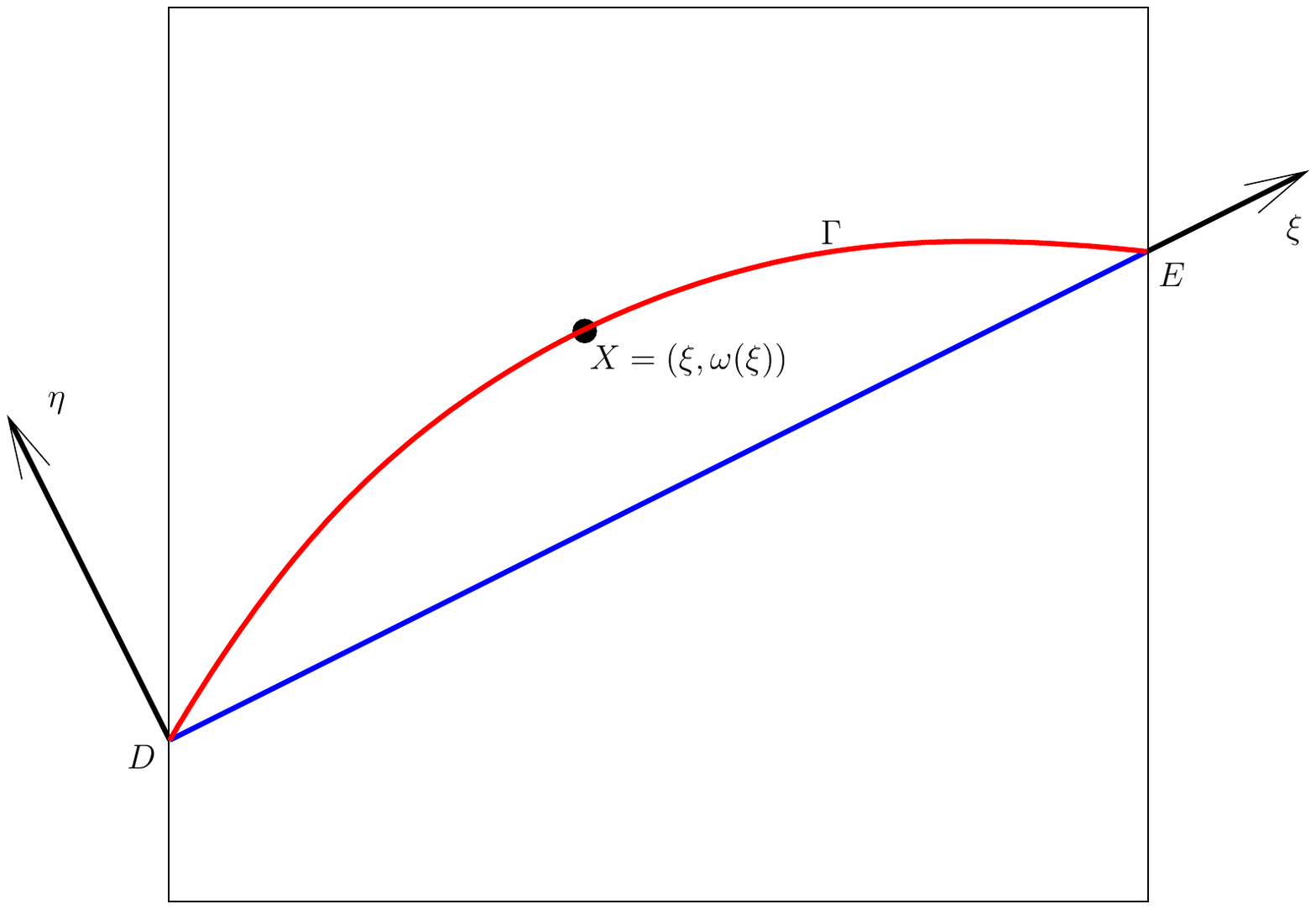}
\caption{The geometry of an interface element }
\label{fig2_1}
\end{minipage}
\end{figure}

For $T \in \mathcal{T}_h^i$, without loss of generality, we can introduce a local coordinate system such that
the point on $\Gamma \cap T$ can expressed as $(\xi,\eta) = (\xi, \omega(\xi))$ for a suitable function $\omega$ such that, in this local system, $D$ is origin and its $x$-axis is aligned with $l$, as shown in Figure \ref{fig2_1}. We start from the following lemma which extends
similar results in \cite{2004LiLinLinRogers}:

\begin{lemma}
\label{lem_geo_curv}
Given any $\epsilon\in(0,\frac{\sqrt{2}}{2})$, assume $h\kappa\leqslant\epsilon$, then for any interface element $T\in\mathcal{T}^i_h$, there hold
\begin{eqnarray}
&&|w(\xi)|\leqslant 2(1-2\epsilon^2)^{-3/2}\kappa h^2, \label{eq2_1} \\
&&|w'(\xi)|\leqslant \sqrt{2}(1-2\epsilon^2)^{-3/2}\kappa h. \label{eq2_2}
\end{eqnarray}
\end{lemma}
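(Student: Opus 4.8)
The plan is to use the fact that, in the chosen local frame, the interface arc $\Gamma\cap T$ is the graph $\eta=\omega(\xi)$ of a $C^2$ function whose curvature is dominated by $\kappa$, and then to obtain the two estimates by integrating $\omega''$ once and twice from suitable base points. First I would record the boundary data for $\omega$: since both $D$ and $E$ lie on $\Gamma\cap T$ and, by construction of the local coordinates, on the $\xi$-axis, we have $\omega(0)=0$ and $\omega(\xi_E)=0$, where $\xi_E=|DE|$ is the chord length. A short geometric check shows that for the Cartesian triangular or rectangular elements under consideration the chord satisfies $|DE|\le\sqrt2\,h$; this is the source of the factor $\sqrt2$ in the statement. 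Applying Rolle's theorem to $\omega$ on $[0,\xi_E]$ produces a point $\xi^\ast$ with $\omega'(\xi^\ast)=0$, which will serve as the base point for integrating $\omega'$.

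The heart of the argument is to convert the pointwise curvature bound into a uniform bound on $\omega'$. The planar curvature formula gives $|\omega''(\xi)|=\kappa(\xi)\bigl(1+\omega'(\xi)^2\bigr)^{3/2}\le\kappa\bigl(1+\omega'(\xi)^2\bigr)^{3/2}$, where $\kappa(\xi)$ is the curvature at $(\xi,\omega(\xi))$. The key observation, which sidesteps a circular bootstrap through the nonlinear right-hand side, is that the quantity $\omega'/\sqrt{1+\omega'^2}$ — the sine of the angle the tangent makes with $l$ — has derivative $\omega''/(1+\omega'^2)^{3/2}$, whose modulus is exactly $\kappa(\xi)\le\kappa$. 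Integrating this from $\xi^\ast$ and using $\omega'(\xi^\ast)=0$ yields $\bigl|\omega'(\xi)/\sqrt{1+\omega'(\xi)^2}\bigr|\le\kappa\,|\xi-\xi^\ast|\le\kappa\,|DE|\le\sqrt2\,h\kappa\le\sqrt2\,\epsilon$. Squaring and rearranging — and here the hypothesis $\epsilon<\tfrac{\sqrt2}{2}$ is precisely what keeps $1-2\epsilon^2>0$ — gives $1+\omega'(\xi)^2\le(1-2\epsilon^2)^{-1}$, hence $|\omega''(\xi)|\le\kappa\,(1-2\epsilon^2)^{-3/2}$.

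With $\omega''$ uniformly bounded the two estimates follow by integration. Writing $\omega'(\xi)=\int_{\xi^\ast}^{\xi}\omega''(t)\,dt$ and using $|\xi-\xi^\ast|\le|DE|\le\sqrt2\,h$ produces \eqref{eq2_2}; writing $\omega(\xi)=\int_0^{\xi}\omega'(t)\,dt$ and inserting \eqref{eq2_2} together with $|\xi|\le\sqrt2\,h$ produces \eqref{eq2_1}. I expect the only genuinely delicate point to be the passage from the pointwise curvature inequality to the uniform gradient bound: handled naively one runs into the nonlinear factor $(1+\omega'^2)^{3/2}$ and a Gr\"onwall-type loop, whereas the substitution $\omega'/\sqrt{1+\omega'^2}=\sin\theta$ linearizes the problem and delivers exactly the constant $(1-2\epsilon^2)^{-3/2}$. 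A secondary technical matter is justifying that the arc is a graph over $l$ in the first place; this is implicit in the local-coordinate setup and can be recovered a posteriori from the finiteness of the bound on $\omega'$.
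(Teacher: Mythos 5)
Your proposal is correct and follows essentially the same route as the paper: the same local frame with $\omega(0)=\omega(\xi_E)=0$, Rolle's theorem to find a zero of $\omega'$, the substitution $g=\omega'/\sqrt{1+\omega'^2}$ (whose derivative is the signed curvature) to convert the pointwise curvature bound into the uniform bound $|\omega''|\leqslant(1-2\epsilon^2)^{-3/2}\kappa$, and then integration over an interval of length at most $\sqrt{2}h$. The only cosmetic difference is that you obtain \eqref{eq2_1} by integrating \eqref{eq2_2} from $\xi=0$, while the paper uses a second-order Taylor expansion of $\omega$ about $D$; both yield the same constant.
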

\begin{proof}
In the local system, let $\xi_E$ be the coordinate of the point $E$. And by the Mean Value Theorem, there is some $\xi_0\in[0,\xi_E]$ such that $\omega'(\xi_0)=0$. Consider a function $g$ as well as its derivative
$$
g(\xi)=\frac{\omega'(\xi)}{\sqrt{1+(\omega'(\xi))^2}}, ~~~ \textrm{and}, ~~~ g'(\xi)=\frac{\omega''(\xi)}{(1+(\omega'(\xi))^2)^{3/2}}.
$$
Note that $g'(\xi)$ is the curvature of $\Gamma$ at $\xi$; hence, we have $|g'(\xi)| \leq \kappa$. Then, by $g(\xi_0)=0$, we have $|g(\xi)|=\abs{\int^{\xi}_{\xi_0}g'(s)ds}\leqslant\int^{\xi}_{\xi_0}|g'(s)|ds\leqslant\sqrt{2}\kappa h=\sqrt{2}\epsilon$, which implies $|\omega'(\xi)|\leqslant\frac{\sqrt{2}\epsilon}{\sqrt{1-2\epsilon^2}}$. By the definition of $\kappa$, we have $|\omega''(\xi)|\leqslant (1-2\epsilon^2)^{-3/2} \kappa$.

Now using the Taylor expansion for $\omega$ around $D$ leads to
$
\omega(\xi)=\omega'(0)\xi+\frac{1}{2}\omega''(\bar{\xi})\xi^2
$
 for some $\bar{\xi}\in[0,\xi_E]$. Note that $\omega(\xi_E)=0$ shows $\omega'(0)=-\frac{1}{2}\omega''(\bar{\xi}_E))\xi_E$. Thus we have $\omega(\xi)=-\frac{1}{2}\omega''(\bar{\xi}_E))\xi_E\xi+\frac{1}{2}\omega''(\bar{\xi})\xi^2$ and therefore, $|\omega(\xi)|\leqslant 2\|\omega''\|_{\infty}h^2\leqslant2(1-2\epsilon^2)^{-3/2}\kappa h^2$, which yields \eqref{eq2_1}. And using the Taylor expansion again for $\omega'$ around $\xi_0$, we have $\omega'(\xi)=\omega''(\tilde{\xi})(\xi-\xi_0)$ for some $\tilde{\xi}$ between $\xi_0$ and $\xi$. Finally we obtain $|\omega'(\xi)|\leqslant \sqrt{2}\|\omega''\|h\leqslant\sqrt{2}(1-2\epsilon^2)^{-3/2}h$.
\end{proof}
%

%

We note that the argument in the lemma above is similar to Assumption 3.14 in \cite{2010ChuGrahamHou} with a minor difference that
a local polar coordinate system is used on the interface element in \cite{2010ChuGrahamHou}.
The following lemmas provide estimates about various geometric quantities defined at points on $\Gamma \cap T$.

\begin{lemma}
\label{lem2_3}
Given any $\epsilon\in(0,\frac{\sqrt{2}}{2})$, assume $h\kappa\leqslant\epsilon$, then for any interface element $T\in\mathcal{T}^i_h$ and any point $\widetilde{X}\in\Gamma\cap T$, the following inequality holds:
\begin{equation}
\label{eq2_6}
    \| \widetilde{X}-\widetilde{X}_{\bot}\| \leqslant 2(1-2\epsilon^2)^{-3/2}\kappa h^2,
\end{equation}
and for any $\widetilde{X}_1$, $\widetilde{X}_2 \in \Gamma \cap T$, we have
\begin{subequations} \label{eq2_7}
\begin{align}
    & \| \mathbf{ n}(\widetilde{X}_1)-\mathbf{ n}(\widetilde{X}_2)\|\leqslant \sqrt{2}(1+(1-2\epsilon^2)^{-3/2})\kappa h,  \label{eq2_7a} \\
    & \mathbf{ n}(\widetilde{X}_1)\cdot\mathbf{ n}(\widetilde{X}_2) \geqslant 1- 2(1+(1-2\epsilon^2)^{-3/2})^2\kappa^2h^2. \label{eq2_7b}
\end{align}
\end{subequations}
\end{lemma}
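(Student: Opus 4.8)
The plan is to carry out everything in the local coordinate system of Figure~\ref{fig2_1}, in which $l$ is the $x$-axis, $D$ is the origin, and $\Gamma\cap T$ is the graph $\{(\xi,\omega(\xi)):\xi\in[0,\xi_E]\}$ with $\xi_E=\|DE\|\leqslant\sqrt{2}\,h$ (the diameter of $T$). In these coordinates all three estimates reduce to statements about $\omega$ and $\omega'$ that are already controlled by Lemma~\ref{lem_geo_curv} and its proof. For \eqref{eq2_6} I would simply note that the orthogonal projection of $\widetilde X=(\xi,\omega(\xi))$ onto the $x$-axis is $(\xi,0)$, so that $\|\widetilde X-\widetilde X_{\bot}\|=|\omega(\xi)|$, and \eqref{eq2_6} is then immediate from \eqref{eq2_1}.

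The substantive part is \eqref{eq2_7a}. First I would write the unit normal at $(\xi,\omega(\xi))$ componentwise as $\mathbf n(\xi)=(n_x(\xi),n_y(\xi))$ with $n_x=-\omega'/\sqrt{1+(\omega')^2}$ and $n_y=1/\sqrt{1+(\omega')^2}$ (up to the chosen orientation). Recognizing $n_x=-g$, where $g$ is the auxiliary function from the proof of Lemma~\ref{lem_geo_curv} whose derivative is exactly the curvature, gives $|n_x'|=|g'|\leqslant\kappa$. Differentiating the second component yields $n_y'=-\omega'\,g'$, so $|n_y'|\leqslant|\omega'|\,\kappa$; feeding in the slope bound $|\omega'|\leqslant\sqrt{2}\,\epsilon/\sqrt{1-2\epsilon^2}$ from the proof of Lemma~\ref{lem_geo_curv}, together with the elementary inequality $\sqrt{2}\,\epsilon(1-2\epsilon^2)\leqslant1$ valid on $(0,\tfrac{\sqrt2}{2})$, upgrades this to $|n_y'|\leqslant(1-2\epsilon^2)^{-3/2}\kappa$. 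I would then apply the mean value theorem to each component over the interval between $\xi_1$ and $\xi_2$, use $|\xi_1-\xi_2|\leqslant\xi_E\leqslant\sqrt{2}\,h$, and combine via $\|\mathbf n(\xi_1)-\mathbf n(\xi_2)\|\leqslant|n_x(\xi_1)-n_x(\xi_2)|+|n_y(\xi_1)-n_y(\xi_2)|$ to produce exactly the two summands $\sqrt{2}\,\kappa h$ and $\sqrt{2}\,(1-2\epsilon^2)^{-3/2}\kappa h$ of \eqref{eq2_7a}.

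Finally, \eqref{eq2_7b} is a short consequence of \eqref{eq2_7a} and the fact that $\mathbf n(\widetilde X_1)$ and $\mathbf n(\widetilde X_2)$ are unit vectors with the same orientation: the identity $\mathbf n(\widetilde X_1)\cdot\mathbf n(\widetilde X_2)=1-\tfrac12\|\mathbf n(\widetilde X_1)-\mathbf n(\widetilde X_2)\|^2$, together with $1-\tfrac12 t\geqslant1-t$, turns the square of \eqref{eq2_7a} directly into the stated lower bound. The main obstacle is the normal-difference estimate \eqref{eq2_7a}: one must split into the two coordinate components so that the slope-dependent factor $(1-2\epsilon^2)^{-3/2}$ is tracked correctly, and one must justify that $\Gamma\cap T$ is a graph over $l$ of $\xi$-extent at most $\sqrt{2}\,h$ (which rests on $|\omega'|$ being finite for $\epsilon<\tfrac{\sqrt2}{2}$ and on hypotheses \textbf{(H1)}--\textbf{(H3)}). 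The remaining two estimates are immediate from Lemma~\ref{lem_geo_curv} and the unit-vector identity.
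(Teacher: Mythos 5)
Your proposal is correct and follows essentially the same route as the paper: estimate \eqref{eq2_6} from \eqref{eq2_1} via the local coordinates, a componentwise mean value theorem argument on $\mathbf{n}(\xi)=\bigl(-\omega'/\sqrt{1+(\omega')^2},\,1/\sqrt{1+(\omega')^2}\bigr)$ using the curvature bound for \eqref{eq2_7a}, and the polarization identity $\|\mathbf{n}_1-\mathbf{n}_2\|^2=2-2\,\mathbf{n}_1\cdot\mathbf{n}_2$ for \eqref{eq2_7b}. Your bookkeeping for the second component (bounding $|n_y'|\leqslant|\omega'|\,|g'|$ and then invoking the slope bound) is a slightly cleaner packaging of the same computation the paper carries out directly, and it lands on exactly the two summands of \eqref{eq2_7a}.
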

\begin{proof}
Estimate \eqref{eq2_6} directly follows from \eqref{eq2_1}. For \eqref{eq2_7a}, we assume $\widetilde{X}_1 = (\xi_1, w(\xi_2))$ and $\widetilde{X}_2 = (\xi_2, w(\xi_2))$ in the local system, respectively. Then we have
\begin{equation*}
\mathbf{ n}(\widetilde{X}_1)=\frac{1}{\sqrt{1+(w'(\xi_1))^2}}
\left(\begin{array}{c}
-w'(\xi_1) \\
1
\end{array}\right),
 \;\;\;
\mathbf{ n}(\widetilde{X}_2)=\frac{1}{\sqrt{1+(w'(\xi_2))^2}}
\left(\begin{array}{c}
-w'(\xi_2) \\
1
\end{array}\right).
\end{equation*}
By the calculation in Lemma \ref{lem_geo_curv} and Mean Value Theorem, there is some $\bar{\xi}\in[0,\xi_E]$ such that
\begin{equation*}
\begin{split}
\abs{\frac{w'(\xi_1)}{\sqrt{1+(w'(\xi_1))^2}} - \frac{w'(\xi_2)}{\sqrt{1+(w'(\xi_2))^2}}}
=\frac{|\omega''(\bar{\xi})|}{(1+(\omega'(\bar{\xi}))^{2})^{3/2}}|\xi_1-\xi_2|\leqslant \sqrt{2}\kappa h,
\end{split}
\end{equation*}
and $\tilde{\xi}\in[0,\xi_E]$ such that
\begin{equation*}
\begin{split}
\abs{\frac{1}{\sqrt{1+(w'(\xi_1))^2}} - \frac{1}{\sqrt{1+(w'(\xi_2))^2}}} = \frac{|\omega'(\tilde{\xi})| |\omega''(\tilde{\xi})|}{ (1+(\omega'(\tilde{\xi}))^{2})^{3/2} } |\xi_1-\xi_2| 
\leqslant 2(1-2\epsilon^2)^{-3/2}\epsilon \kappa h.
\end{split}
\end{equation*}
Then \eqref{eq2_7a} follows by applying these estimates in the local coordinate forms of $\mathbf{ n}(\widetilde{X}_1)$ and $\mathbf{ n}(\widetilde{X}_2)$.
Furthermore, by \eqref{eq2_7a} and
\begin{equation*}
\begin{split}
\| \mathbf{ n}(\widetilde{X}_1)-\mathbf{ n}(\widetilde{X}_2)\|^2
=\| \mathbf{ n}(\widetilde{X}_1)\|^2+\|\mathbf{ n}(\widetilde{X}_2)\|^2-2\mathbf{ n}(\widetilde{X}_1)\cdot\mathbf{ n}(\widetilde{X}_2)
=2-2\mathbf{ n}(\widetilde{X}_1)\cdot\mathbf{ n}(\widetilde{X}_2),
\end{split}
\end{equation*}
we have \eqref{eq2_7b}.
\end{proof}

\begin{rem}\label{rem2_1}
Note that there exists a point $\widetilde{X}_1 = (\xi_1, w(\xi_1))\in \Gamma\cap T$ such that $w'(\xi_1)=0$ which means $\mathbf{n}(\widetilde{X}_1) = \bar{\mathbf{n}}$. Hence, by Lemma \ref{lem2_3}, we have the following estimates for an arbitrary point $\widetilde{X} \in \Gamma \cap T$:
\begin{subequations}
\begin{align}
    & \| \mathbf{ n}(\widetilde{X})-\bar{\mathbf{ n}}\|\leqslant  \sqrt{2}(1+(1-2\epsilon^2)^{-3/2})\kappa h,  \label{eq2_8a}\\
    & \mathbf{ n}(\widetilde{X})\cdot \bar{\mathbf{ n}} \geqslant 1- 2(1+(1-2\epsilon^2)^{-3/2})^2\kappa^2 h^2. \label{eq2_8b}
\end{align}
\end{subequations}
\end{rem}

The two lemmas above have suggested a criteria about how small $h$ should be according to the maximum curvature $\kappa$ of $\Gamma$ so that the related analysis is valid.
Therefore, for all discussions from now on, we further assume that
\begin{itemize}[leftmargin=30pt]
\item
$h$ is sufficiently small such that for some fixed parameter $\epsilon\in (0,\sqrt{2}/2)$ and $\bar{\kappa} \in (0, 1]$ of one's own choice, there holds
\begin{equation}
\label{h_small_assump}
h<\min{ \left\{ \frac{\sqrt{\bar{\kappa}}}{\sqrt{2}(1+(1-2\epsilon^2)^{-3/2})\kappa}, \; \frac{\epsilon}{\kappa} \right\} }.
\end{equation}
\end{itemize}
Obviously $\epsilon$ is the proportion by which we should choose the mesh size $h$ according to the interface curvature $\kappa$. Also, by \eqref{h_small_assump}
and \eqref{eq2_8b}, we have
\begin{eqnarray}
\mathbf{ n}(\widetilde{X})\cdot \bar{\mathbf{ n}} \geq 1 - \bar{\kappa} \label{n_nbar_angle_lower_bnd}
\end{eqnarray}
which shows how much the angle between the normal of $\Gamma\cap T$ and $\bar{\mathbf{ n}}$ can vary in an interface element $T \in \mathcal{T}^i_h$, a larger value of
$\bar{\kappa}\in (0, 1]$ allows $\mathbf{ n}(\widetilde{X})$ to vary more from $\bar{\mathbf{ n}}$ up to, but not equal to, $90$ degree. Therefore, we will call $\bar{\kappa}$ the angle allowance.

In the rest of this article, all the generic constants $C$ are assumed to possibly depend only on the parameter $\epsilon$ and $\bar{\kappa}$, but they are independent of the interface location, $\beta^{\pm}$, and the curvature $\kappa$.

We now consider some matrices associated with the normal of interface $\Gamma$ and the normal of $l$. First, for any $\widetilde{X}\in\Gamma\cap T$, we use the normal $\mathbf{n}(\widetilde{X})$ to form two matrices:
\begin{equation*}
N^s(\widetilde{X})=\left(\begin{array}{cc}
\tilde{n}_y(\widetilde{X}) & -\tilde{n}_x(\widetilde{X}) \\
\beta^s \tilde{n}_x(\widetilde{X}) & \beta^s \tilde{n}_y(\widetilde{X})
\end{array}\right), ~~s = \pm.
\end{equation*}
Since $Det(N^s(\widetilde{X}))=\beta^s, s = \pm$, these matrices are nonsingular; therefore, we can define another two matrices at the
point $\widetilde{X}\in\Gamma\cap T$:
\begin{equation}
\label{eq2_3}
M^-(\widetilde{X})=\left(N^+(\widetilde{X})\right)^{-1}N^-(\widetilde{X})=
\left(\begin{array}{cc}
\tilde{n}^2_y(\widetilde{X})+\rho \tilde{n}^2_x(\widetilde{X}) & (\rho-1)\tilde{n}_x(\widetilde{X})\tilde{n}_y(\widetilde{X}) \\
(\rho-1)\tilde{n}_x(\widetilde{X})\tilde{n}_y(\widetilde{X}) & \tilde{n}^2_x(\widetilde{X})+\rho \tilde{n}^2_y(\widetilde{X})
\end{array}\right),
\end{equation}
\begin{equation}
\label{eq2_4}
M^+(\widetilde{X})=\left(N^-(\widetilde{X})\right)^{-1}N^+(\widetilde{X})=
\left(\begin{array}{cc}
\tilde{n}^2_y(\widetilde{X})+1/\rho \tilde{n}^2_x(\widetilde{X}) & (1/\rho-1)\tilde{n}_x(\widetilde{X})\tilde{n}_y(\widetilde{X}) \\
(1/\rho-1)\tilde{n}_x(\widetilde{X})\tilde{n}_y(\widetilde{X}) & \tilde{n}^2_x(\widetilde{X})+1/\rho \tilde{n}^2_y(\widetilde{X})
\end{array}\right).
\end{equation}
For matrices $M^-(\widetilde{X})$ and $M^+(\widetilde{X})$, we recall from \cite{2004LiLinLinRogers} the following results
\begin{equation}
\label{eq2_5}
\nabla u^+(\widetilde{X}) = M^-(\widetilde{X}) \nabla u^-(\widetilde{X}), \;\;\;\;\;
\nabla u^-(\widetilde{X})  = M^+(\widetilde{X}) \nabla u^+(\widetilde{X}), ~\forall \widetilde{X}\cap T \in \Gamma, ~~\forall u\in PC^2_{int}(T).
\end{equation}


In addition, for $\widetilde{X}\in\Gamma\cap T$, we can use the normal vectors $\mathbf{n}(\widetilde{X})$ and $\bar{\mathbf{n}}$ to form the following matrices:
\begin{equation*}
\overline{N}^s(\widetilde{X})=\left(\begin{array}{cc}
\bar{n}_y & -\bar{n}_x \\
\beta^s\tilde{n}_x(\widetilde{X}) & \beta^s\tilde{n}_y(\widetilde{X})
\end{array}\right), ~s = \pm.
\end{equation*}
By Remark \ref{rem2_1}, we have
\begin{equation*}
\begin{split}
Det(\overline{N}^{s}(\widetilde{X}))= \beta^{s}\mathbf{ n}(\widetilde{X})\cdot \bar{\mathbf{ n}}\geqslant \beta^{s} (1- \bar{\kappa}), \;\;\; s=\pm ,
\end{split}
\end{equation*}
which means $\overline{N}^{s}(\widetilde{X})$ are non-singular when $h$ is small enough; hence, we can use them to form
\begin{equation}
\label{eq2_9}
\overline{M}^+(\widetilde{X})=(\overline{N}^-(\widetilde{X}))^{-1}\overline{N}^+(\widetilde{X}), \;\;\;\; \overline{M}^-(\widetilde{X})=(\overline{N}^+(\widetilde{X}))^{-1}\overline{N}^-(\widetilde{X}).
\end{equation}


\begin{lemma}
\label{lem2_6}
For the mesh $\mathcal{T}_h$ with $h$ sufficiently small, there exists a constant $C$ independent of interface location, $\beta^{\pm}$, and $\kappa$, such that, for two arbitrary points $\widetilde{X}_i, i = 1, 2$ on $\Gamma \cap T$, we have
\begin{equation}
\label{bounds_for_Mbar}
\| \overline{M}^-(\widetilde{X}_1)\| \leqslant C, ~~~ \| \overline{M}^+(\widetilde{X}_1)\| \leqslant \frac{C}{\rho}, ~~~ \| M^-(\widetilde{X}_2)\| \leqslant C, ~~~ \| M^+(\widetilde{X}_2)\| \leqslant \frac{C}{\rho},
\end{equation}
and
\begin{equation}
\label{eq2_13}
\| \overline{M}^-(\widetilde{X}_1)-M^-(\widetilde{X}_2)\| \leqslant C\kappa h, ~~~~ \| \overline{M}^+(\widetilde{X}_1)-M^+(\widetilde{X}_2)\| \leqslant \frac{C\kappa}{\rho}h.
\end{equation}
\end{lemma}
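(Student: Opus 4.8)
The plan is to reduce the entire statement to elementary scalar estimates on the entries of the four matrices, exploiting the fact that $M^{\pm}$ and $\overline{M}^{\pm}$ depend on $\beta^{\pm}$ only through the ratio $\rho$, so the jump coefficient never interferes with the bounds. First I would carry out the matrix products in \eqref{eq2_9} explicitly, in analogy with \eqref{eq2_3}--\eqref{eq2_4}. Writing $c(\widetilde{X})=\mathbf{n}(\widetilde{X})\cdot\bar{\mathbf{n}}$, so that $Det(\overline{N}^{s}(\widetilde{X}))=\beta^{s}c(\widetilde{X})$, a direct computation gives
\begin{equation*}
\overline{M}^-(\widetilde{X})=\frac{1}{c(\widetilde{X})}
\left(\begin{array}{cc}
\tilde{n}_y\bar{n}_y+\rho\,\tilde{n}_x\bar{n}_x & (\rho-1)\tilde{n}_y\bar{n}_x \\
(\rho-1)\tilde{n}_x\bar{n}_y & \tilde{n}_x\bar{n}_x+\rho\,\tilde{n}_y\bar{n}_y
\end{array}\right),
\end{equation*}
and the formula for $\overline{M}^+(\widetilde{X})$ is identical with $\rho$ replaced by $1/\rho$. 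The key observation linking the barred and unbarred matrices is that setting $\bar{\mathbf{n}}=\mathbf{n}(\widetilde{X})$ (whence $c=1$) recovers exactly $M^-(\widetilde{X})$ and $M^+(\widetilde{X})$ of \eqref{eq2_3}--\eqref{eq2_4}.

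The bounds \eqref{bounds_for_Mbar} follow at once. Since $\mathbf{n}(\widetilde{X})$ and $\bar{\mathbf{n}}$ are unit vectors and $\rho\leqslant1$, every entry of the numerator of $\overline{M}^-$ is bounded in absolute value by $1$ (using $|\tilde{n}_x\bar{n}_x|+|\tilde{n}_y\bar{n}_y|\leqslant1$ by Cauchy--Schwarz and $|\rho-1|\leqslant1$), while $c(\widetilde{X})\geqslant1-\bar{\kappa}$ by \eqref{n_nbar_angle_lower_bnd}; hence $\|\overline{M}^-(\widetilde{X}_1)\|\leqslant C$ with $C$ depending only on $\bar{\kappa}$. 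For $\overline{M}^+$ the coefficients $1/\rho$ and $1/\rho-1$ are bounded by $1/\rho$, producing the factor $C/\rho$, and the bounds on $M^{\pm}(\widetilde{X}_2)$ are read off from \eqref{eq2_3}--\eqref{eq2_4} in the same way.

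For the difference estimates \eqref{eq2_13} I would insert the intermediate matrix $M^-(\widetilde{X}_1)$ and split
\begin{equation*}
\overline{M}^-(\widetilde{X}_1)-M^-(\widetilde{X}_2)
=\big(\overline{M}^-(\widetilde{X}_1)-M^-(\widetilde{X}_1)\big)
+\big(M^-(\widetilde{X}_1)-M^-(\widetilde{X}_2)\big).
\end{equation*}
The second bracket is handled by Lipschitz continuity: each entry of $M^-$ is a quadratic polynomial in $(\tilde{n}_x,\tilde{n}_y)$ with coefficients bounded by $1$, hence Lipschitz on the unit circle, so \eqref{eq2_7a} gives $\|M^-(\widetilde{X}_1)-M^-(\widetilde{X}_2)\|\leqslant C\|\mathbf{n}(\widetilde{X}_1)-\mathbf{n}(\widetilde{X}_2)\|\leqslant C\kappa h$. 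For the first bracket I would use the closed form above: passing from $\overline{M}^-(\widetilde{X}_1)$ to $M^-(\widetilde{X}_1)$ amounts to replacing $\bar{\mathbf{n}}$ by $\mathbf{n}(\widetilde{X}_1)$ in the (bounded, linear) numerator and replacing $1/c(\widetilde{X}_1)$ by $1$. Treating these as two successive perturbations, the first costs $\|\bar{\mathbf{n}}-\mathbf{n}(\widetilde{X}_1)\|\leqslant C\kappa h$ by \eqref{eq2_8a}, and the second costs $|1-c(\widetilde{X}_1)|/c(\widetilde{X}_1)\leqslant C\kappa^2h^2$ by \eqref{eq2_8b}, the numerator being bounded and $c(\widetilde{X}_1)\geqslant1-\bar{\kappa}$. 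Both contributions are $O(\kappa h)$, and the ``$+$'' versions carry the coefficient $1/\rho$ throughout, yielding $C\kappa h/\rho$.

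The main obstacle is precisely the factor $1/c(\widetilde{X}_1)$ in the first bracket: one must verify that $c(\widetilde{X}_1)$ deviates from $1$ only to \emph{second} order, $|1-c(\widetilde{X}_1)|\leqslant C\kappa^2h^2$, using \eqref{eq2_8b} rather than merely \eqref{eq2_8a}, so that after dividing, and absorbing one power of $\kappa h$ via $\kappa h\leqslant\epsilon$, this term still contributes at the required first order $O(\kappa h)$. The remaining work—expanding $\bar{n}_x=\tilde{n}_x(\widetilde{X}_1)+O(\kappa h)$, and collecting cross terms—is routine, the only point of care being to keep the $1/\rho$ scaling correctly attached to every coefficient in the ``$+$'' matrices.
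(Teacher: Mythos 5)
Your proposal is correct and follows essentially the same route as the paper: both rest on the explicit entrywise formula for $\overline{M}^{\pm}$, bound the numerator entries via Cauchy--Schwarz and the denominator via \eqref{n_nbar_angle_lower_bnd}, and for \eqref{eq2_13} separate a first-order perturbation of the normals (controlled by \eqref{eq2_7a} and \eqref{eq2_8a}) from the second-order denominator perturbation $1-\bar{\mathbf{n}}\cdot\mathbf{n}(\widetilde{X}_1)$ controlled by \eqref{eq2_8b}. The only cosmetic difference is that you insert $M^-(\widetilde{X}_1)$ as an intermediate matrix, whereas the paper subtracts $\overline{M}^-(\widetilde{X}_1)-M^-(\widetilde{X}_2)$ directly and isolates the factor $\frac{1-\bar{\mathbf{n}}\cdot\mathbf{n}(\widetilde{X}_1)}{\bar{\mathbf{n}}\cdot\mathbf{n}(\widetilde{X}_1)}$ in a single display.
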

\begin{proof}
\eqref{bounds_for_Mbar} can be verified directly. We only prove \eqref{eq2_13} for the $-$ case and the arguments for the $+$ case are similar. For simplicity, we denote $\mathbf{ n}(\widetilde{X}_i)=(\tilde{n}_{ix}, \tilde{n}_{iy})$, $i=1,2$. Then by direct calculations, we have
\begin{equation*}
\begin{split}
\overline{M}^-(\widetilde{X}_1)-M^-(\widetilde{X}_2) =&
\left(\begin{array}{cc}
\tilde{n}_{1y}\bar{n}_{y}-\tilde{n}^2_{2y}+\rho(\tilde{n}_{1x}\bar{n}_{x}-\tilde{n}^2_{2x}) & (\rho-1)(\tilde{n}_{1y}\bar{n}_x-\tilde{n}_{2x}\tilde{n}_{2y}) \\
(\rho-1)(\tilde{n}_{1x}\bar{n}_y-\tilde{n}_{2x}\tilde{n}_{2y})  & \tilde{n}_{1x}\bar{n}_{x}-n^2_{2x}+\rho(\tilde{n}_{1y}\bar{n}_{y}-\tilde{n}^2_{2y})
\end{array}\right)\\
&+\frac{1-\bar{\mathbf{ n}}\cdot\mathbf{ n}(\widetilde{X}_1)}{\bar{\mathbf{ n}}\cdot\mathbf{ n}(\widetilde{X}_1)}
\left(\begin{array}{cc}
\tilde{n}_{1y}\bar{n}_{y}+\rho\tilde{n}_{1x}\bar{n}_{x} & (\rho-1)\tilde{n}_{1y}\bar{n}_x \\
(\rho-1)\tilde{n}_{1x}\bar{n}_y  & \tilde{n}_{1x}\bar{n}_{x}+\rho\tilde{n}_{1y}\bar{n}_{y}
\end{array}\right).
\end{split}
\end{equation*}
By the triangular inequality, \eqref{eq2_7a}, \eqref{eq2_8b}, \eqref{h_small_assump}, and $\rho\leqslant1$, we can verify that $\| \overline{M}^-(\widetilde{X}_1)-M^-(\widetilde{X}_2)\| \leqslant C\kappa h$ for a constant $C$ independent of interface location, $\beta^{\pm}$, and $\kappa$.
\end{proof}

The following lemmas provide a group of identities on interface elements.
\begin{lemma}
\label{lem2_4}
For the mesh $\mathcal{T}_h$ with $h$ sufficiently small, the following results hold for all $\widetilde{X} \in \Gamma$:
\begin{itemize}
\item
$\overline{M}^-(\widetilde{X})$ and $\overline{M}^+(\widetilde{X})$ are inverse matrices to each other, i.e.,
\begin{equation}
\label{eq2_10}
\overline{M}^-(\widetilde{X})\overline{M}^+(\widetilde{X})=I,~~\overline{M}^+(\widetilde{X})\overline{M}^-(\widetilde{X}) = I.
\end{equation}

\item
Matrix $(\overline{M}^-(\widetilde{X}))^T$ has two eigenvalues $1$ and $\rho$ with the corresponding eigenvectors $\bar{\mathbf{ t}}$ and $\mathbf{ n}(\widetilde{X})$, i.e.,
\begin{equation}
\label{eq2_12}
\left( \overline{M}^-(\widetilde{X}) \right)^T \bar{\mathbf{ t}}=\bar{\mathbf{ t}},~~\left( \overline{M}^- (\widetilde{X})\right)^T \mathbf{ n}(\widetilde{X})=\rho\mathbf{ n}(\widetilde{X}).
\end{equation}

\item
Similarly, matrix $(\overline{M}^+(\widetilde{X}))^T$ has two eigenvalues $1$ and $1/\rho$ with the corresponding eigenvectors $\bar{\mathbf{ t}}$ and $\mathbf{ n}(\widetilde{X})$, respectively, i.e.,
\begin{equation}
\label{eq2_11}
\left( \overline{M}^+(\widetilde{X}) \right)^T \bar{\mathbf{ t}}=\bar{\mathbf{ t}},~~\left( \overline{M}^+(\widetilde{X}) \right)^T \mathbf{ n}(\widetilde{X})=\frac{1}{\rho}\mathbf{ n}(\widetilde{X}).
\end{equation}

\end{itemize}
\end{lemma}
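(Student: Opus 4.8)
The plan is to exploit the common structure of the two defining matrices $\overline{N}^\pm(\widetilde{X})$ rather than to compute $\overline{M}^\pm(\widetilde{X})$ entrywise. Fix a point $\widetilde{X}\in\Gamma\cap T$. The pair of identities \eqref{eq2_10} is immediate from the definitions \eqref{eq2_9}: since each $\overline{N}^s(\widetilde{X})$ is invertible for $h$ small enough (its determinant equals $\beta^s\,\mathbf{n}(\widetilde{X})\cdot\bar{\mathbf{n}}\geq\beta^s(1-\bar{\kappa})>0$, as established just before the lemma), I would telescope
\[
\overline{M}^-\overline{M}^+=(\overline{N}^+)^{-1}\overline{N}^-(\overline{N}^-)^{-1}\overline{N}^+=I,
\]
and symmetrically $\overline{M}^+\overline{M}^-=I$. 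A useful consequence is that \eqref{eq2_11} need not be proved independently: \eqref{eq2_10} gives $(\overline{M}^+)^T=\big((\overline{M}^-)^{-1}\big)^T=\big((\overline{M}^-)^T\big)^{-1}$, so once \eqref{eq2_12} is known, $(\overline{M}^+)^T$ automatically has the reciprocal eigenvalues $1$ and $1/\rho$ with the same eigenvectors $\bar{\mathbf{t}}$ and $\mathbf{n}(\widetilde{X})$. Thus it suffices to establish \eqref{eq2_12}.

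The key observation for the eigenpairs is that the rows of $\overline{N}^s(\widetilde{X})$ are precisely the vectors that are claimed to be eigenvectors. Writing $\mathbf{e}_1=(1,0)^T$ and $\mathbf{e}_2=(0,1)^T$, the first row of $\overline{N}^s$ is $(\bar{n}_y,-\bar{n}_x)=\bar{\mathbf{t}}^T$, which is independent of $s$, while the second row is $\beta^s(\tilde{n}_x,\tilde{n}_y)=\beta^s\,\mathbf{n}(\widetilde{X})^T$. Hence
\[
\mathbf{e}_1^T\overline{N}^+=\mathbf{e}_1^T\overline{N}^-=\bar{\mathbf{t}}^T,\qquad \mathbf{e}_2^T\overline{N}^\pm=\beta^\pm\,\mathbf{n}(\widetilde{X})^T .
\]
These two relations are the whole engine of the argument: they let me compute the left action of $\overline{M}^-=(\overline{N}^+)^{-1}\overline{N}^-$ on the candidate eigenvectors without inverting $\overline{N}^+$ explicitly. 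Since $(\overline{M}^-)^T\mathbf{v}=\lambda\mathbf{v}$ is equivalent to $\mathbf{v}^T\overline{M}^-=\lambda\mathbf{v}^T$, I would substitute $\bar{\mathbf{t}}^T=\mathbf{e}_1^T\overline{N}^+$ to get
\[
\bar{\mathbf{t}}^T\overline{M}^-=\mathbf{e}_1^T\overline{N}^+(\overline{N}^+)^{-1}\overline{N}^-=\mathbf{e}_1^T\overline{N}^-=\bar{\mathbf{t}}^T,
\]
and substitute $\mathbf{n}(\widetilde{X})^T=\tfrac{1}{\beta^+}\mathbf{e}_2^T\overline{N}^+$ to get
\[
\mathbf{n}(\widetilde{X})^T\overline{M}^-=\tfrac{1}{\beta^+}\mathbf{e}_2^T\overline{N}^+(\overline{N}^+)^{-1}\overline{N}^-=\tfrac{\beta^-}{\beta^+}\,\mathbf{n}(\widetilde{X})^T=\rho\,\mathbf{n}(\widetilde{X})^T,
\]
which are exactly the two statements in \eqref{eq2_12}.

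Finally I would check that $\bar{\mathbf{t}}$ and $\mathbf{n}(\widetilde{X})$ are linearly independent: the bound $\mathbf{n}(\widetilde{X})\cdot\bar{\mathbf{n}}\geq 1-\bar{\kappa}>0$ forces $\mathbf{n}(\widetilde{X})$ to have a nonzero component along $\bar{\mathbf{n}}$ and hence to be non-parallel to $\bar{\mathbf{t}}$, so the two eigenpairs exhibited above form a complete eigenbasis and the eigenvalue lists in \eqref{eq2_12} and \eqref{eq2_11} are exhaustive. I do not expect a genuine obstacle in this lemma; the only things to watch are the bookkeeping of which of $\overline{N}^\pm$ cancels against which inverse in each product, and keeping the invertibility of $\overline{N}^s(\widetilde{X})$ (guaranteed by the determinant lower bound) in force so that every cancellation is legitimate.
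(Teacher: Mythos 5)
Your proposal is correct, and for the substantive part of the lemma it takes a genuinely different route from the paper. The paper proves \eqref{eq2_10} by the same telescoping cancellation you use, but for \eqref{eq2_12} it computes $\overline{M}^-(\widetilde{X})$ entrywise via the explicit $2\times 2$ inverse of $\overline{N}^+(\widetilde{X})$ and then verifies the two eigenvector equations by direct multiplication; \eqref{eq2_11} is then obtained, as in your argument, from $(\overline{M}^-)^T(\overline{M}^+)^T=I$. Your alternative replaces the entrywise computation with the observation that the rows of $\overline{N}^s(\widetilde{X})$ are exactly $\bar{\mathbf{t}}^T$ and $\beta^s\mathbf{n}(\widetilde{X})^T$, so that left-multiplying $\overline{M}^-=(\overline{N}^+)^{-1}\overline{N}^-$ by these row vectors collapses immediately to the eigenvalue relations. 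This is cleaner and more structural: it explains \emph{why} the eigenpairs are $(\bar{\mathbf{t}},1)$ and $(\mathbf{n},\rho)$ (the first row of $\overline{N}^s$ encodes the $s$-independent tangential data, the second the flux scaled by $\beta^s$), it avoids the error-prone inversion, and it would generalize to analogous constructions in higher dimensions. What the paper's computation buys instead is the explicit entrywise formula for $\overline{M}^-(\widetilde{X})$, which is not needed for this lemma per se but is of the same flavor as the expressions manipulated in the proof of Lemma \ref{lem2_6}. Your closing remark on linear independence of $\bar{\mathbf{t}}$ and $\mathbf{n}(\widetilde{X})$ (via $\mathbf{n}(\widetilde{X})\cdot\bar{\mathbf{n}}\geq 1-\bar{\kappa}>0$) is a correct and worthwhile addition justifying that the stated eigenvalue lists are exhaustive, a point the paper leaves implicit.
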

\begin{proof} First it is easy to see that $\overline{M}^-(\widetilde{X})\overline{M}^+(\widetilde{X})=(\overline{N}^-(\widetilde{X}))^{-1}\overline{N}^+(\widetilde{X})(\overline{N}^+(\widetilde{X}))^{-1}\overline{N}^-(\widetilde{X})=I$. Next by direct calculation, we have
\begin{equation*}
\begin{split}
\overline{M}^-(\widetilde{X})
&=\frac{1}{\bar{n}_x\tilde{n}_x(\widetilde{X})+\bar{n}_y\tilde{n}_y(\widetilde{X})}
\left(\begin{array}{cc}
\bar{n}_y\tilde{n}_y(\widetilde{X})+\rho\bar{n}_x\tilde{n}_x(\widetilde{X}) & -\bar{n}_x\tilde{n}_y(\widetilde{X})+\rho\bar{n}_x\tilde{n}_y(\widetilde{X}) \\
-\bar{n}_y\tilde{n}_x(\widetilde{X})+\rho\bar{n}_y\tilde{n}_x(\widetilde{X}) & \bar{n}_x\tilde{n}_x(\widetilde{X})+\rho\bar{n}_y\tilde{n}_y(\widetilde{X})
\end{array}\right)
\end{split}
\end{equation*}
from which we can easily verify that $(\overline{M}^-(\widetilde{X}))^{T}\bar{\mathbf{ t}}=\bar{\mathbf{ t}}$ and
$ (\overline{M}^-(\widetilde{X}))^{T}\mathbf{ n}(\widetilde{X})=\rho \; \mathbf{ n}(\widetilde{X})$.
The results about $(\overline{M}^+(\widetilde{X}))^{T}$ follow from the fact $(\overline{M}^-(\widetilde{X}))^{T}(\overline{M}^+(\widetilde{X}))^{T}=I$.
\end{proof}

\begin{lemma}
\label{lem2_5}
Let $\mathcal{T}_h$ be a mesh with $h$ sufficiently small. Let $P\in \Omega$ and $\widetilde{X}$ be an arbitrary point on $\Gamma \cap T$. Then the following vectors are independent of $\overline{X}\in l$:
\begin{equation*}\label{}
(\overline{M}^+(\widetilde{X}) - I)^T(P - \overline{X}) \;\;\; and \;\;\; (\overline{M}^-(\widetilde{X}) - I)^T(P - \overline{X}).
\end{equation*}
\end{lemma}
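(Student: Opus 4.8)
The plan is to reduce the statement to the eigenvector identities already established in Lemma \ref{lem2_4}. The crucial geometric fact is that every displacement along the line $l$ is a multiple of its tangent direction $\bar{\mathbf{t}}=(\bar{n}_y,-\bar{n}_x)$: if $\overline{X}_1,\overline{X}_2\in l$, then $\overline{X}_1-\overline{X}_2=c\,\bar{\mathbf{t}}$ for some scalar $c\in\mathbb{R}$. So the whole proof hinges on how $(\overline{M}^{\pm}(\widetilde{X})-I)^T$ acts on $\bar{\mathbf{t}}$, and Lemma \ref{lem2_4} tells us exactly that.

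First I would fix the point $\widetilde{X}\in\Gamma\cap T$ and, to show independence of $\overline{X}$, verify that the difference of the vector at two arbitrary points $\overline{X}_1,\overline{X}_2\in l$ vanishes. Since $P$ cancels, this difference is
$$\big(\overline{M}^-(\widetilde{X})-I\big)^T(P-\overline{X}_1)-\big(\overline{M}^-(\widetilde{X})-I\big)^T(P-\overline{X}_2)=\big(\overline{M}^-(\widetilde{X})-I\big)^T(\overline{X}_2-\overline{X}_1).$$
Next I would substitute $\overline{X}_2-\overline{X}_1=c\,\bar{\mathbf{t}}$ and invoke the eigenvector identity $(\overline{M}^-(\widetilde{X}))^T\bar{\mathbf{t}}=\bar{\mathbf{t}}$ from \eqref{eq2_12}, which yields $(\overline{M}^-(\widetilde{X})-I)^T\bar{\mathbf{t}}=0$. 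Consequently the difference equals $c\,(\overline{M}^-(\widetilde{X})-I)^T\bar{\mathbf{t}}=0$, proving independence in the $-$ case. The $+$ case is verbatim identical, using instead the identity $(\overline{M}^+(\widetilde{X}))^T\bar{\mathbf{t}}=\bar{\mathbf{t}}$ from \eqref{eq2_11}.

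There is essentially no analytic obstacle once Lemma \ref{lem2_4} is in hand; the only point requiring care is the recognition that the tangent $\bar{\mathbf{t}}$ of $l$ is precisely the eigenvector with eigenvalue $1$ shared by both $(\overline{M}^-(\widetilde{X}))^T$ and $(\overline{M}^+(\widetilde{X}))^T$. I would underscore that this is the structural reason the later construction is robust: the foot of the projection onto $l$ may be chosen at any convenient reference point (for instance $D$, $E$, or $\widetilde{X}_{\bot}$) without altering the quantities that feed into the multipoint Taylor expansion and the subsequent interpolation error analysis.
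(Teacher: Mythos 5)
Your proof is correct and follows essentially the same route as the paper's: take the difference of the vector at two points of $l$, note that $\overline{X}_1-\overline{X}_2$ is a scalar multiple of $\bar{\mathbf{t}}$, and apply the eigenvalue-$1$ identity $(\overline{M}^{\pm}(\widetilde{X}))^T\bar{\mathbf{t}}=\bar{\mathbf{t}}$ from Lemma \ref{lem2_4}. No gaps.
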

\begin{proof}
For two arbitrary points $\overline{X}_i \in l, i = 1, 2$, $\overline{X}_1 - \overline{X}_2$ is a scalar multiple of $\bar{\mathbf{ t}}$. Hence, by Lemma \ref{lem2_4},
\begin{eqnarray*}
(\overline{M}^-(\widetilde{X}) - I)^T(P - \overline{X}_1) - (\overline{M}^-(\widetilde{X}) - I)^T(P - \overline{X}_2) = (\overline{M}^-(\widetilde{X}) - I)^T(\overline{X}_1 - \overline{X}_2) = 0
\end{eqnarray*}
which leads to $(\overline{M}^-(\widetilde{X}) - I)^T(P - \overline{X}_1) = (\overline{M}^-(\widetilde{X}) - I)^T(P - \overline{X}_2)$. Therefore
$(\overline{M}^-(\widetilde{X}) - I)^T(P - \overline{X})$ does not change when $\overline{X}\in l$ varies. The result for  $(\overline{M}^+(\widetilde{X}) - I)^T(P - \overline{X})$ can be proven similarly.
\end{proof}

\section{Multipoint Taylor Expansions on Interface Elements}
In this section, extending those in \cite{2009HeTHESIS,2008HeLinLin,2004LiLinLinRogers,1982Xu,2013ZhangTHESIS}, we derive multipoint Taylor expansions in more general formats for a function $u\in PC^2_{int}(T)$ over an arbitrary interface element $T\in\mathcal{T}^i_h$, in which $u(M_i), i \in \mathcal{I}$ is described in terms of $u$ and its derivatives at $x \in T^s, s = \pm$.  We also estimate the remainders in these expansions.
And as in \cite{2008HeLinLin}, we call a point $X\in T$ an obscure point if one of the lines $\overline{XM_i}, 1 \leq i \leq DOF(T)$ can intersect $\Gamma$ more than once. To facilitate a clear expository presentation of main ideas in our analysis, we carry out error estimation only for interface elements without any obscure points.
For the case containing obscure points, we can use a first order expansion for $u$ and use the argument that the measure of obscure points is bounded by $\mathcal{O}(h^3)$.


First, we partition $\mathcal{I}$ into two index sets: $\mathcal{I}^+=\{ i: M_i\in T^+ \}$ and $\mathcal{I}^-=\{ i: M_i\in T^- \}$ according to the locations of
$M_i, i \in \mathcal{I}$. For every $X\in T$, we let $Y_i(t,X)=tM_i+(1-t)X$. When
$X$ and $M_i$ are on different sides of $\Gamma$, we let $\tilde{t}_i=\tilde{t}_i(X)\in [0,1]$ such that $\widetilde{Y}_i=Y_i(\tilde{t}_i,X)$ is on the curve $\Gamma\cap T$, see Figure \ref{fig3_1} for an illustration in which the rotated-$Q_1$ finite elements are considered. When $M_i$ and $X$ are on the same side of $\Gamma$, by the standard second order Taylor expansion of $u\in PC^2_{int}(T)$, we have
\begin{eqnarray}
u^s(M_i)&=&u^s(X)+ \nabla u^s(X)\cdot(M_i-X) + R_i^s(X),~ i\in \mathcal{I}^s, s = \pm, ~\forall X \in T^s, \label{eq3_1}\\
\text{with~~} R_i^s(X) &=&\int_0^1(1-t)\frac{d^2}{dt^2}u^s(Y_i(t,X))dt, ~i\in \mathcal{I}^s, ~\forall X \in T^s. \label{eq3_2}
\end{eqnarray}

\begin{figure}[H]
\centering
\includegraphics[width=2.3in]{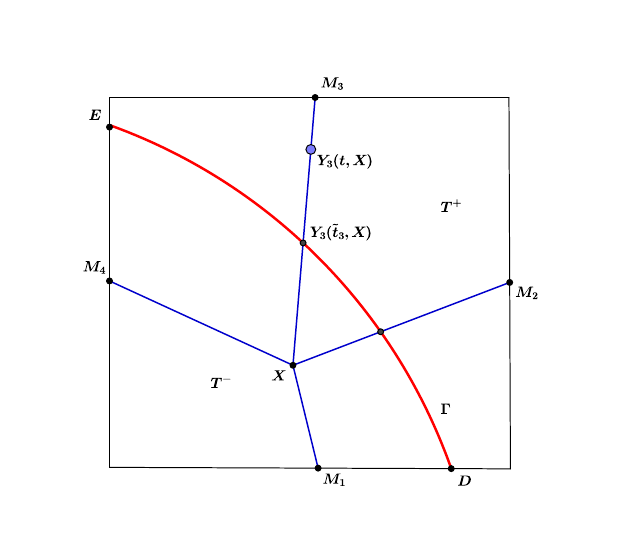}
\caption{ The expansion of $u$ in a rectangular interface element}
\label{fig3_1}
\end{figure}

In the following discussion, we denote $s = \pm, s' = \mp$, i.e., $s$ and $s'$ take opposite signs whenever a formula have them both.
When, $M_i$ and $X$ are on different sides of $\Gamma$, the expansions in \cite{2009HeTHESIS,2008HeLinLin,2004LiLinLinRogers,2013ZhangTHESIS} can be
generalized to the following format for $u\in PC^2_{int}(T)$:
\begin{equation}
\begin{split}
\label{eq3_3}
u^{s'}(M_i)=&u^{s}(X)+\nabla u^{s}(X)\cdot (M_i-X)+ \left(\left( M^{s}(\widetilde{Y}_i)-I \right)\nabla u^{s}(X)\right) \cdot (M_i-\widetilde{Y}_i)\\
& + R_i^{s}(X), ~i\in \mathcal{I}^{s'}, ~\forall X \in T^{s},
\end{split}
\end{equation}
with
\begin{equation}
\begin{cases} \label{eq3_4}
&R_i^s(X)=R_{i1}^s(X)+R_{i2}^s(X)+R_{i3}^s(X), ~i \in \mathcal{I}^{s'}, X \in T^s, \\
&R_{i1}^s(X)=\int_{0}^{\tilde{t}_i}(1-t)\frac{d^2u^s}{dt^2}(Y_i(t,X))dt, \;\;\; R_{i2}^s(X)=\int_{\tilde{t}_i}^{1}(1-t)\frac{d^2u^{s'}}{dt^2}(Y_i(t,X))dt, ~i \in \mathcal{I}^{s'}, X \in T^s,\\
&R_{i3}^s(X)=(1-\tilde{t}_i)\int_{0}^{\tilde{t}_i}\frac{d}{dt} \left( (M^s(\widetilde{Y}_i)-I)\nabla u^s(Y_i(t,X))\cdot (M_i-X) \right)dt, ~i \in \mathcal{I}^{s'}, X \in T^s,
\end{cases}
\end{equation}
where $M^s(\widetilde{Y}_i)$ are from \eqref{eq2_3} and \eqref{eq2_4}. We proceed to estimate remainders in \eqref{eq3_1} and \eqref{eq3_3}.

\begin{lemma}
\label{lem3_2}
Assume $u\in PC^2_{int}(T)$. Then there exist constants $C>0$ independent of the interface location and $\beta^{\pm}$ such that
\begin{equation}
\label{eq3_5}
   \int_{T^s}(1-t)^2|u_{d_1d_2}(Y_i(t,X))|^2dX \leqslant C|u|^2_{2,T},~s = \pm, ~\forall t \in [0, 1],
\end{equation}
where $d_1, d_2 =x$ or $y$.
\end{lemma}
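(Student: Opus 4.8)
The plan is to prove \eqref{eq3_5} by a single change of variables that converts the weighted integral over $T^s$ into an \emph{unweighted} integral of $|u_{d_1d_2}|^2$ over the image of $T^s$ under the map $X\mapsto Y_i(t,X)$. The key observation is that the weight $(1-t)^2$ is precisely the Jacobian of this map, so it is absorbed exactly by the substitution and the constant can be taken to be $C=1$.

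First I would dispose of the degenerate case $t=1$: here $Y_i(1,X)=M_i$ is constant in $X$ and the factor $(1-t)^2$ vanishes, so the left-hand side of \eqref{eq3_5} is $0$ and the bound holds trivially. For the remaining case $t\in[0,1)$ I fix $t$ and view $X\mapsto Z:=Y_i(t,X)=tM_i+(1-t)X$ as the affine contraction of $\mathbb{R}^2$ toward the node $M_i$ by the factor $(1-t)$. Its linear part is $(1-t)I$, so it is a bijection with constant Jacobian determinant $(1-t)^2\neq 0$, and hence $dZ=(1-t)^2\,dX$. Applying the change-of-variables formula to $g(Z)=|u_{d_1d_2}(Z)|^2$ gives
\begin{equation*}
\int_{T^s}(1-t)^2\,|u_{d_1d_2}(Y_i(t,X))|^2\,dX=\int_{Y_i(t,T^s)}|u_{d_1d_2}(Z)|^2\,dZ,
\end{equation*}
where $Y_i(t,T^s)$ is the image of $T^s$.

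Next I would locate the image inside $T$. Since $T$ is convex and both $M_i$ and every $X\in T^s$ lie in $\overline{T}$, the convex combination $Y_i(t,X)$ lies in $\overline{T}$, so $Y_i(t,T^s)\subseteq\overline{T}$. Splitting this image into its intersections with $T^+$ and $T^-$, on which $u_{d_1d_2}$ reads as the one-sided derivatives $u^+_{d_1d_2}$ and $u^-_{d_1d_2}$, and then enlarging each piece to the full subregion, I obtain
\begin{equation*}
\int_{Y_i(t,T^s)}|u_{d_1d_2}(Z)|^2\,dZ\leqslant \int_{T^+}|u^+_{d_1d_2}|^2\,dZ+\int_{T^-}|u^-_{d_1d_2}|^2\,dZ\leqslant |u|^2_{2,T},
\end{equation*}
which is \eqref{eq3_5} with $C=1$. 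Because the estimate reduces to $C=1$, its independence of the interface location and of $\beta^{\pm}$ is automatic.

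I do not expect a substantive obstacle here; the only point requiring care is the piecewise nature of $u\in PC^2_{int}(T)$. The segment $Y_i(t,\cdot)$ and its image may cross $\Gamma$, so $u_{d_1d_2}$ must be interpreted as the appropriate one-sided second derivative on each side. This is harmless because for $t<1$ the substitution is a genuine diffeomorphism and the sets $Y_i(t,T^s)\cap T^{\pm}$ are measurable subsets on which $u^{\pm}\in C^2$, so $u^{\pm}_{d_1d_2}\in L^2$ and the splitting above is legitimate. The entire content of the lemma is thus the remark that the $(1-t)^2$ weight is exactly the Jacobian of the contraction toward $M_i$.
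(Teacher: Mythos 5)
Your proof is correct and follows essentially the same route as the paper's: the paper likewise introduces the image set $T^s(t)=\{tM_i+(1-t)X : X\in T^s\}\subseteq T$, performs the change of variables with Jacobian $(1-t)^2$ that exactly absorbs the weight, and bounds the resulting integral by $|u|^2_{2,T}$. Your added care with the degenerate case $t=1$ and with the piecewise interpretation of $u_{d_1d_2}$ across $\Gamma$ is a harmless refinement of the same argument.
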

\begin{proof}
Let $M_i = (x_i, y_i), i \in \mathcal{I}$ and let $\xi=t x_i+(1-t)x$ and $\eta=t y_i+(1-t)y$.
For each fixed $t \in [0,1]$, define
\begin{equation}
T^s(t)=\{ tM_i+(1-t)X ~|~ X\in T^s \}.
\end{equation}
Since $T^s(t) \subseteq T, s = \pm$, we have
\begin{eqnarray*}
 \int_{T^s}(1-t)^2|u_{d_1d_2}(Y(t,X))|^2dX
= \int_{T^s(t)}(1-t)^2 u^2_{d_1d_2}(\xi,\eta)(1-t)^{-2}d\xi d\eta \leqslant |u|^2_{2,T}
\end{eqnarray*}
which leads to \eqref{eq3_5}. \end{proof}
%

By a direct calculation we have
\begin{equation}
\begin{split}
\label{eq3_7}
\frac{d^2}{dt^2}u^s(Y_i(t,X))=(M_i-X)^TH_u^s(Y_i(t,X))(M_i-X), ~s = \pm,
\end{split}
\end{equation}
where
\begin{equation*}
H_u^s(Y_i(t,X))=
\left(\begin{array}{cc}
u_{xx}^s(Y_i(t,x)) & u_{xy}^s(Y_i(t,x)) \\
u_{yx}^s(Y_i(t,x)) & u_{yy}^s(Y_i(t,x))
\end{array}\right), ~s = \pm,
\end{equation*}
is the Hessian matrix of $u^s$. We are now ready to derive bounds for the remainders in the following lemmas.

\begin{lemma}
\label{lem3_3}
Assume $u\in PC^2_{int}(T)$, there exist constants $C>0$ independent of the location of the interface and $\beta^{\pm}$ such that
\begin{eqnarray}
&&\| R_i^s \|_{0,T^s} \leqslant Ch^2 |u|_{2,T}, \;\;\; i\in \mathcal{I}^s, ~s = \pm, \label{eq3_8} \\
&&\| R_{i1}^s \|_{0,T^s} \leqslant Ch^2 |u|_{2,T}, \;\;\; \| R_{i2}^s \|_{0,T^s}  \leqslant Ch^2 |u|_{2,T}, \;\;\; i \in \mathcal{I}^{s'}, ~s = \pm. \label{eq3_9}
\end{eqnarray}
\end{lemma}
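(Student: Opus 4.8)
The plan is to reduce all three bounds to Lemma~\ref{lem3_2} through the Hessian representation \eqref{eq3_7}, combined with a single Cauchy--Schwarz step that upgrades the weight $(1-t)$ to $(1-t)^2$. The three ingredients I will use repeatedly are: (i) the identity \eqref{eq3_7}, which writes each integrand $\frac{d^2}{dt^2}u^s(Y_i(t,X))$ as the quadratic form $(M_i-X)^T H_u^s(Y_i(t,X))(M_i-X)$; (ii) the geometric bound $\|M_i-X\|\le Ch$, valid since $M_i,X\in T$ and $\mathrm{diam}(T)\le Ch$; and (iii) Lemma~\ref{lem3_2}. Observe that $\beta^{\pm}$ never enters these pointwise integrand bounds, so the constants are automatically $\beta^{\pm}$-independent, and the interface enters only through the splitting $T=T^+\cup T^-$ and the crossing time $\tilde{t}_i$, neither of which affects the shape of the estimate.

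First I would treat \eqref{eq3_8}. Using \eqref{eq3_7} and $\|M_i-X\|\le Ch$, the integrand satisfies $\big|\frac{d^2}{dt^2}u^s(Y_i(t,X))\big|\le Ch^2\sum_{d_1,d_2}|u^s_{d_1d_2}(Y_i(t,X))|$ with $d_1,d_2\in\{x,y\}$; since $M_i$ and $X$ lie on the same side and the element has no obscure points, $Y_i(t,X)\in T^s$ for all $t\in[0,1]$, so $u^s$ is the natural piece throughout. A Cauchy--Schwarz step in $t$ (using $\int_0^1 1\,dt=1$) gives $|R_i^s(X)|^2\le Ch^4\int_0^1(1-t)^2\big(\sum_{d_1,d_2}|u^s_{d_1d_2}(Y_i(t,X))|\big)^2dt$. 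Expanding the square (finitely many terms) and integrating in $X$, Tonelli lets me interchange the $t$- and $X$-integrals and apply Lemma~\ref{lem3_2} for each fixed $t$, producing $\|R_i^s\|_{0,T^s}^2\le Ch^4|u|_{2,T}^2$. The estimate for $R_{i1}^s$ is identical: its integrand is the same quadratic form in $u^s$, and restricting the $t$-interval from $[0,1]$ to $[0,\tilde{t}_i]$ with $\tilde{t}_i\le1$ only shrinks a nonnegative integrand, so the same chain applies verbatim.

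The term $R_{i2}^s$ is the only one requiring extra care, and I expect it to be the main obstacle. Here $M_i\in T^{s'}$ while $X\in T^s$, the integrand involves the opposite-side Hessian $u^{s'}$, and the $t$-integration runs over $[\tilde{t}_i,1]$ with $\tilde{t}_i=\tilde{t}_i(X)$ depending on $X$. For $t\in[\tilde{t}_i(X),1]$ the no-obscure-point assumption guarantees $Y_i(t,X)\in T^{s'}$, so $u^{s'}$ is the natural piece exactly where it is evaluated. I would rewrite the $t$-integral over $[0,1]$ against the indicator $\chi_{[\tilde{t}_i(X),1]}(t)$, run the same Cauchy--Schwarz and Tonelli steps, and for each fixed $t$ estimate the inner quantity $\int_{T^s}(1-t)^2|u^{s'}_{d_1d_2}(Y_i(t,X))|^2\,\chi_{\{Y_i(t,X)\in T^{s'}\}}\,dX$. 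The change of variables $(\xi,\eta)=Y_i(t,X)$ from the proof of Lemma~\ref{lem3_2} maps this set into $T^{s'}$ with Jacobian $(1-t)^2$, so the weight cancels and the integral is bounded by $|u|_{2,T^{s'}}^2\le|u|_{2,T}^2$.

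Putting these together yields $\|R_{i2}^s\|_{0,T^s}^2\le Ch^4|u|_{2,T}^2$, hence $\|R_{i2}^s\|_{0,T^s}\le Ch^2|u|_{2,T}$, which together with the bound for $R_{i1}^s$ establishes \eqref{eq3_9}. The genuinely delicate point throughout is the bookkeeping for $R_{i2}^s$: because $\tilde{t}_i$ is $X$-dependent, one must be careful to evaluate $u^{s'}$ only on the set where the segment has already crossed $\Gamma$, and it is precisely the no-obscure-point hypothesis that makes this set coincide (after the change of variables) with a subset of $T^{s'}$, allowing the Jacobian cancellation that Lemma~\ref{lem3_2} relies on.
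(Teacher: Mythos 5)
Your proof is correct and follows essentially the same route as the paper: both reduce everything to Lemma~\ref{lem3_2} via the Hessian identity \eqref{eq3_7}, the bound $\|M_i-X\|\le Ch$, and a step that exchanges the $L^2$-norm in $X$ with the integral in $t$ (you use Cauchy--Schwarz in $t$ plus Tonelli where the paper uses Minkowski's integral inequality, which is immaterial). Your explicit bookkeeping for $R_{i2}^s$ simply fills in the detail behind the paper's ``Estimate \eqref{eq3_9} can be derived similarly.''
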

\begin{proof}
According to \eqref{eq3_2} and \eqref{eq3_7}, for $s = \pm$, we have
\begin{equation*}
\begin{split}
\| R_i^s \|_{0,T^s}=& \left(  \int_{T^s} \left( \int_{0}^{1}(1-t)(M_i-X)H_u^s(Y_i(t,X))(M_i-X)dt \right)^2 dX  \right)^{\frac{1}{2}}\\
\leqslant& Ch^2  \int_{0}^{1} \left( \int_{T^s}(1-t)^2(|u_{xx}^s(Y_i(t,X))|^2+|u_{xy}^s(Y_i(t,X))|^2+|u_{yy}^s(Y_i(t,X))|^2)dX\right)^{\frac{1}{2}}dt.
\end{split}
\end{equation*}
Then \eqref{eq3_8} follows from Lemma \ref{lem3_2}. Estimate \eqref{eq3_9} can be derived similarly.
\end{proof}


Moreover, note that for $i\in\mathcal{I}^{s'}, X \in T^s$ and $t\in [0, \tilde{t}_i(X)]$, it can be verified that
\begin{equation}
\begin{split}
\label{eq3_10}
\frac{d}{dt}\left( (M^{s}(\widetilde{Y}_i)-I)\nabla u^s(Y_i(t,X))\cdot(M_i-X) \right)= (M_i-X)^T H_u^s(Y_i(t,X))(M^{s}(\widetilde{Y}_i)-I)^T(M_i-X).
\end{split}
\end{equation}

\begin{lemma}
\label{lem3_5}
Assume $u\in PC^2_{int}(T)$, there exist constants $C>0$ independent of the interface location and $\beta^{\pm}$ such that
\begin{equation}
\label{eq3_11}
\| R_{i3}^- \|_{0,T^-}\leqslant Ch^2 |u|_{2,T}, \;\;\; i\in \mathcal{I}^{+}, ~~~~ \| R_{i3}^+ \|_{0,T^+}\leqslant \frac{C}{\rho}h^2 |u|_{2,T}, \;\;\; i\in \mathcal{I}^{-}.
\end{equation}
\end{lemma}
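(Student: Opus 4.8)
The plan is to mirror the argument in the proof of Lemma \ref{lem3_3}, using the identity \eqref{eq3_10} to rewrite the integrand of $R_{i3}^s$ as a Hessian quadratic form and then apply Lemma \ref{lem3_2} once the correct weight has been extracted. First I would invoke \eqref{eq3_10} to write the integrand appearing in the definition \eqref{eq3_4} of $R_{i3}^s(X)$ as $(M_i - X)^T H_u^s(Y_i(t,X)) (M^s(\widetilde{Y}_i) - I)^T (M_i - X)$, and bound its absolute value by the operator-norm estimate
\begin{equation*}
\left| (M_i - X)^T H_u^s(Y_i(t,X)) (M^s(\widetilde{Y}_i) - I)^T (M_i - X) \right| \leq \| M^s(\widetilde{Y}_i) - I \| \, \| H_u^s(Y_i(t,X)) \| \, |M_i - X|^2 \leq C h^2 \| M^s(\widetilde{Y}_i) - I \| \, \| H_u^s(Y_i(t,X)) \|,
\end{equation*}
where I use that the edges of $T$ have length $O(h)$, so $|M_i - X| \leq C h$.

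The key manipulation concerns the prefactor $(1 - \tilde{t}_i)$ sitting outside the integral in \eqref{eq3_4}: Lemma \ref{lem3_2} requires the weight $(1-t)^2$, so this single power must be upgraded. Since the integration variable ranges over $t \in [0, \tilde{t}_i]$, I have $1 - \tilde{t}_i \leq 1 - t$, which lets me pull the prefactor inside and dominate, obtaining
\begin{equation*}
|R_{i3}^s(X)| \leq C h^2 \, \| M^s(\widetilde{Y}_i) - I \| \int_0^1 (1-t)\, \| H_u^s(Y_i(t,X)) \| \, dt.
\end{equation*}
Taking the $L^2(T^s)$ norm and applying Minkowski's integral inequality to move the norm inside the $t$-integral (exactly as in Lemma \ref{lem3_3}), then using Lemma \ref{lem3_2} entrywise on the Hessian (bounding $\|H_u^s\|^2$ by the sum of squares of its entries $u_{d_1 d_2}$), yields
\begin{equation*}
\| R_{i3}^s \|_{0,T^s} \leq C h^2 \, \| M^s(\widetilde{Y}_i) - I \| \, |u|_{2,T}.
\end{equation*}

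Finally I would insert the bounds from Lemma \ref{lem2_6}. For $i \in \mathcal{I}^+$ the relevant side is $s = -$, and $\| M^-(\widetilde{Y}_i) \| \leq C$ gives $\| M^-(\widetilde{Y}_i) - I \| \leq C + 1$, so the minus estimate has a constant independent of $\beta^{\pm}$. For $i \in \mathcal{I}^-$ the relevant side is $s = +$, and $\| M^+(\widetilde{Y}_i) \| \leq C/\rho$ together with $\rho \leq 1$ gives $\| M^+(\widetilde{Y}_i) - I \| \leq C/\rho$, which is precisely the origin of the extra factor $1/\rho$ in \eqref{eq3_11}. The only genuine obstacle is the bookkeeping around the $(1 - \tilde{t}_i)$ weight: one must notice that the restriction $t \leq \tilde{t}_i$ forces $1 - \tilde{t}_i \leq 1 - t$, so the lone power outside the integral combines with the differential to reproduce the squared weight Lemma \ref{lem3_2} is built for; the remainder of the argument is a routine repetition of Lemma \ref{lem3_3}.
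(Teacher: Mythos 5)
Your proposal is correct and follows essentially the same route as the paper's proof: rewrite the integrand via \eqref{eq3_10}, use $1-\tilde{t}_i(X)\leq 1-t$ on $[0,\tilde{t}_i(X)]$ to absorb the outer prefactor into the weight, apply Minkowski's integral inequality and Lemma \ref{lem3_2}, and draw the $1$ versus $1/\rho$ distinction from the bounds $\|M^-\|\leq C$ and $\|M^+\|\leq C/\rho$ in Lemma \ref{lem2_6}. No gaps.
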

\begin{proof}
We only provide the proof of $\| R_{i3}^- \|_{0,T^-}$ and the argument for $\| R_{i3}^+ \|_{0,T^+}$ is similar. According to \eqref{eq3_10} and the fact $0\leqslant1-\tilde{t}_i(X)\leqslant 1-t$ for any $t\in [0,\tilde{t}_i(X)]$, we have
\begin{equation*}
\begin{split}
\|R^-_{i3}\|_{0,T^-}
=&\left( \int_{T^-} (1-\tilde{t}_i(X))^2 \left( \int_{0}^{\tilde{t}_i(X)}  (M_i-X)^T H_u^-(Y_i(t,X))(M^{-}(\widetilde{Y}_i)-I)^T(M_i-X)dt \right)^2 dX \right)^{\frac{1}{2}}\\
\leqslant& Ch^2 \left( \int_{T^-} \left( \int_0^1|1-t| \; \big(\abs{u^-_{xx}(Y_i(t,X))} + \abs{u^-_{xy}(Y_i(t,X))} + \abs{u^-_{yy}(Y_i(t,X))}\big)dt\right)^2dX \right)^{\frac{1}{2}}\\
\leqslant& Ch^2 \int_{0}^{1} \left( \int_{T^-}(1-t)^2\big(|u^-_{xx}(Y_i(t,X))|^2+|u^-_{xy}(Y_i(t,X))|^2+|u^-_{yy}(Y_i(t,X))|^2\big)dX\right)^{\frac{1}{2}}dt,
\end{split}
\end{equation*}
where we use the fact $\rho\leqslant1$. Then applying the estimates in Lemma \ref{lem3_2} to the above leads to \eqref{eq3_11}.
\end{proof}

\section{IFE Spaces and Their Properties}
\label{sec:IFE Spaces and Their Properties}

In this section, we discuss IFE spaces constructed from the related finite elements $(T,\Pi_T,\Sigma^P_T)$ for $T \in \mathcal{T}_h$
described in \eqref{eq4_1} and \eqref{eq4_2}. We will first address the unisolvence of the immersed finite elements on interface elements. We will then present a few fundamental properties of IFE functions. Moreover, we will show that these IFE spaces have the optimal approximation capability according to the polynomials used to constructed them.

\subsection{Local IFE spaces}


First, on each element $T \in \mathcal{T}_h$, the standard finite element $(T,\Pi_T,\Sigma^P_T)$ leads to the following local finite element space:
\begin{equation}
\label{eq4_3}
S^{P}_h(T)= \textrm{Span} \{ \psi^P_{i,T}:\; i\in\mathcal{I} \},
\end{equation}
where $\psi^P_{i,T}, i\in\mathcal{I}$ are the shape functions satisfying \eqref{eqn_psi_iT_values_bounds}. This local finite element space is then naturally used as the local
IFE space on every non-interface element $T \in \mathcal{T}_h^n$. Therefore, our effort here focuses on the local IFE space on interface elements. We will discuss the unisolvence, i.e., we will show that the local degrees of freedom $\Sigma^P_T$ can uniquely determine an IFE function with a suitable set of interface jump conditions. The unisolvence guarantees the existence and uniqueness of IFE shape functions that can span the local IFE space on interface elements.

Let $T\in \mathcal{T}^i_h$ be a typical interface element with vertices $A_i, i \in \mathcal{I}$. Without loss of generality, we assume
\begin{eqnarray}\label{eq4_4}
\begin{array}{ll}
A_1=(0,0),~~A_2=(h,0),~~A_3=(h,h), & \text{($T$ is a triangular element)}, \\
A_1=(0,0),~~A_2=(h,0),~~A_3=(h,h),~~A_4=(0,h), & \text{($T$ is a rectangular element)},
\end{array}
\end{eqnarray}
and the edges of $T$ are denoted as
\begin{eqnarray}\label{eq4_5}
\begin{array}{ll}
b_1=\overline{A_1A_2},~~b_2=\overline{A_2A_3},~~b_3=\overline{A_3A_1}, & \text{($T$ is a triangular element)}, \\
b_1=\overline{A_1A_2},~~b_2=\overline{A_2A_3},~~b_3=\overline{A_3A_4},~~b_4=\overline{A_4A_1}, & \text{($T$ is a rectangular element)}.
\end{array}
\end{eqnarray}


On each interface element $T$, we consider IFE functions in the following piecewise polynomial format:
\begin{equation}
\label{eq4_10}
\phi^P_T(X) =
\left\{
\begin{aligned}
\phi^{P\;-}_T(X)=\phi^{-}_T(X)\in \Pi_T \;\;\;\;\; & \text{if} \;\; X\in T^-, \\
\phi^{P\;+}_T(X)=\phi^{+}_T(X)\in \Pi_T \;\;\;\;\; & \text{if} \;\; X\in T^+,
\end{aligned}
\right.
\end{equation}
such that it can satisfy the jump conditions \eqref{eq1_3} and \eqref{eq1_4} in an approximate sense as follow:
\begin{eqnarray}
&&\begin{cases}
\phi^{-}_T|_{l}=\phi^{+}_T|_{l}, &\text{($T$ is a triangular element)}, \\
\phi^{-}_T|_{l}=\phi^{+}_T|_{l}, ~~d(\phi^{-}_T)=d(\phi^{+}_T), &\text{($T$ is a rectangular element)},
\end{cases} \label{eq4_8} \\
&&\beta^-\nabla\phi^{-}_T(F)\cdot\mathbf{ n}(F)=\beta^+\nabla\phi^{+}_T(F)\cdot\mathbf{ n}(F), \label{eq4_9}
\end{eqnarray}
where $d(p)$ denotes the coefficient in the second degree term for $p \in \Pi_T$ and $F$ is an arbitrary point on $\Gamma \cap T$.
For an IFE function $\phi^P_T$ such that
\begin{equation}
\label{eq4_12}
\phi^P_T(M_i)=v_i, \;\;\;\; i\in \mathcal{I},
\end{equation}
we can first expand $\phi^P_T$ on the sub-element with more degrees of freedom, i.e., on $T^+$ with the assumption that
$\abs{\mathcal{I}^+} \geq \abs{\mathcal{I}^-}$ without loss of generality, and the condition \eqref{eq4_8} then implies that
\begin{equation}
\label{eq4_13}
\phi^P_T(X) =
\begin{cases}
 \phi^{P\;-}_T(X)  = \phi^{P\;+}_T(X)+c_0L(X) & \text{if} \;\; X\in T^-, \\
 \phi^{P\;+}_T(X)  = \sum_{i\in\mathcal{I}^-}c_i\psi^P_{i,T}(X)+\sum_{i\in\mathcal{I}^+}v_i\psi^P_{i,T}(X)& \text{if} \;\; X\in T^+,
\end{cases}
\end{equation}
where the function
\begin{equation}
\label{eq4_11}
L(X)=\bar{\mathbf{ n}}\cdot(X-D)
\end{equation}
is such that $L(X)=0$ is the equation of the line $l$ and $\nabla L(X) = \bar{\mathbf{n}}$.

Recall from Remark \ref{rem2_1},
$
\nabla L(F)\cdot\mathbf{ n}(F)=\bar{\mathbf{n}}\cdot\mathbf{ n}(F)\geqslant 1-\bar{\kappa} >0,
$
when $h$ is small enough; hence
\begin{eqnarray}
\label{k}
\mu = \left( \frac{1}{\rho}-1 \right)\frac{ 1}{\bar{\mathbf{n}}\cdot\mathbf{ n}(F)}
\end{eqnarray}
is well defined, and, by $\rho\leqslant1$, we have
\begin{eqnarray}
\label{bound_for_k}
0\leqslant \mu  \leqslant \left( \frac{1}{\rho}-1 \right) \frac{1}{1 - \bar{\kappa}}.
\end{eqnarray}
By condition \eqref{eq4_9}, we then have
\begin{equation}
\label{eqn4_18}
c_0=
\mu \left( \sum_{i\in\mathcal{I}^-}c_i\nabla\psi^P_{i,T}(F)\cdot\mathbf{ n}(F)+\sum_{i\in\mathcal{I}^+}v_i\nabla\psi^P_{i,T}(F)\cdot\mathbf{ n}(F) \right).
\end{equation}
Putting this formula for $c_0$ in formula \eqref{eq4_13} for $\phi_T^P(X)$ and setting
$\phi^{P\;-}_T(M_j)=v_j$ for $j \in \mathcal{I}^-$ leads to the following linear system for $c_i, i \in \mathcal{I}^-$:
\begin{equation}
\begin{split}
\label{eqn4_20}
&\sum_{i\in\mathcal{I}^-} \left( \psi^P_{i,T}(M_j) + \mu \nabla\psi^P_{i,T}(F)\cdot\mathbf{ n}(F) L(M_j) \right)c_i \\
=& v_j-\sum_{i\in\mathcal{I}^+} \left( \psi^P_{i,T}(M_j) + \mu \nabla\psi^P_{i,T}(F)\cdot\mathbf{ n}(F) L(M_j) \right)v_i,~~j \in \mathcal{I}^-.
\end{split}
\end{equation}
Since $\psi^P_{i,T}(M_j)=\delta_{ij}$, for $i,j\in\mathcal{I}^-$, we can write the linear system \eqref{eqn4_20} in the following matrix form:
\begin{equation}
\label{eqn4_15}
(I + \mu \,\bfdelta \bfgamma^T )\bfc = \bfb,
\end{equation}
where $\mathbf{c}=(c_i)_{i\in \mathcal{I}^-}$,
\begin{equation}
\label{eqn4_16}
\bfgamma = \left( \nabla\psi^P_{i,T}(F)\cdot\mathbf{ n}(F) \right)_{i\in\mathcal{I}^-}, \;\;\; \bfdelta=\left( L(M_i) \right)_{i\in\mathcal{I}^-}
\end{equation}
and
\begin{equation}
\label{eqn4_17}
\bfb =
\left(
v_i- \mu L(M_i) \sum_{j\in\mathcal{I}^+} \nabla\psi^P_{j,T}(F)\cdot\mathbf{ n}(F)v_j
 \right)_{i\in\mathcal{I}^-}
\end{equation}
are all column vectors. We proceed to show that $\phi_T^P(X)$, i.e., its coefficients $c_0, \bfc$ are uniquely determined. We need the following two lemmas. Let
$\bar{\bfgamma}=\left( \nabla\psi_{i,T}(F_{\bot})\cdot \bar{\bfn} \right)_{i\in\mathcal{I}^-}$.
\begin{lemma}
\label{lemma4_3}
For all the interface elements, we have
$
\bar{\bfgamma}^T\bfdelta \in [0,1]
$.
And for the linear and bilinear $\psi_{i, T}, i\in\mathcal{I}$, there holds
\begin{equation}
\label{eq_Ldelgam_1}
|L(M_i)|<2h \, \sqrt{\bar{\bfgamma}^T\bfdelta}, ~~~ \forall i\in\mathcal{I}^-.
\end{equation}
Furthermore, for the bilinear $\psi_{i, T}, i\in\mathcal{I}$, if $F$ is chosen to be such that $F_{\bot}$ is the mid-point of the line $\overline{DE}$, then
\begin{equation}
\label{eq_Ldelgam_2}
|L(M_i)|<2h \, \bar{\bfgamma}^T\bfdelta, ~~~ \forall i\in\mathcal{I}^-.
\end{equation}
\end{lemma}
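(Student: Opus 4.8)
The plan is to handle all three assertions through a single device, namely the exactness of the local Lagrange interpolation on affine functions, and to close with elementary estimates involving only the two edge intercepts of the cut. First I set up the reduction. By hypotheses (H1) and (H2) the interface meets $\partial T$ at the two points $D,E$ lying on distinct edges, and after the normalization \eqref{eq4_4} the index set splits as $\mathcal{I}=\mathcal{I}^-\cup\mathcal{I}^+$ with $1\le|\mathcal{I}^-|\le 2$. I orient $\bar{\mathbf{n}}$ from $T^-$ to $T^+$, so that $L(M_i)\le 0$ for $i\in\mathcal{I}^-$ and $L(M_i)\ge 0$ for $i\in\mathcal{I}^+$, and I note that $F_\bot$ lies on the segment $\overline{DE}\subset l$. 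Each of the four spaces $\Pi_T$ contains $\mathrm{Span}\{1,x,y\}$ and its Lagrange degrees of freedom reproduce $\Pi_T$, so the affine function $L$ of \eqref{eq4_11} is reproduced exactly, $L=\sum_{i\in\mathcal{I}}L(M_i)\psi_{i,T}$. Differentiating this and using $\nabla L=\bar{\mathbf{n}}$ together with $\|\bar{\mathbf{n}}\|=1$ gives the key identity
$$
1=\sum_{i\in\mathcal{I}}L(M_i)\bigl(\nabla\psi_{i,T}(X)\cdot\bar{\mathbf{n}}\bigr)\qquad\text{for every } X\in T,
$$
whence $\bar{\bfgamma}^T\bfdelta=\sum_{i\in\mathcal{I}^-}L(M_i)\bigl(\nabla\psi_{i,T}(F_\bot)\cdot\bar{\mathbf{n}}\bigr)=1-\sum_{i\in\mathcal{I}^+}L(M_i)\bigl(\nabla\psi_{i,T}(F_\bot)\cdot\bar{\mathbf{n}}\bigr)$.

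For $\bar{\bfgamma}^T\bfdelta\in[0,1]$ I would prove the two bounds separately. The lower bound is sign based: from the explicit gradients one checks that for $i\in\mathcal{I}^-$ the normal derivative $\nabla\psi_{i,T}(F_\bot)\cdot\bar{\mathbf{n}}$ has the same (non-positive) sign as $L(M_i)$ whenever $F_\bot\in\overline{DE}$, so every term of the first sum is non-negative and $\bar{\bfgamma}^T\bfdelta\ge 0$. The upper bound comes from inserting the explicit shape functions into that sum, writing the cut through its edge intercepts $0<d,e\le h$ and $F_\bot=(1-s)D+sE$ with $s\in[0,1]$, and bounding the resulting rational expression by elementary inequalities; for instance in the one-node bilinear case the sum is at most $\frac{de(d+e)}{h(d^2+e^2)}\le 1$, using $d+e\le 2h$ and $2de\le d^2+e^2$.

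For the quantitative estimates I would reduce \eqref{eq_Ldelgam_1} to a single per-node inequality. Because every summand is non-negative, for each fixed $i\in\mathcal{I}^-$ one has $\bar{\bfgamma}^T\bfdelta\ge |L(M_i)|\,\bigl|\nabla\psi_{i,T}(F_\bot)\cdot\bar{\mathbf{n}}\bigr|$, so the bound $|L(M_i)|<2h\sqrt{\bar{\bfgamma}^T\bfdelta}$ follows once I establish $|L(M_i)|<4h^2\,\bigl|\nabla\psi_{i,T}(F_\bot)\cdot\bar{\mathbf{n}}\bigr|$, a comparison between the distance of $M_i$ to $l$ and the normal derivative of its own shape function that the explicit forms reduce, via $d,e\le h$, to statements such as $de<4h\min(d,e)$. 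For the sharpened bound \eqref{eq_Ldelgam_2} I would discard this crude single-term estimate and instead evaluate $\bar{\bfgamma}^T\bfdelta$ at the midpoint $F_\bot$ of $\overline{DE}$; the midpoint choice symmetrizes the expression in $D$ and $E$, and after cancelling the common factor the claim becomes $\sqrt{d^2+e^2}+(d^2+e^2)/h<2(d+e)$, which holds strictly since $\sqrt{d^2+e^2}<d+e$ and $(d^2+e^2)/h\le d+e$.

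I expect the main obstacle to be the non-affine elements. For $P_1$ and Crouzeix--Raviart the gradients are constant and each estimate is a one-line computation, but for the bilinear and rotated-$Q_1$ elements $\nabla\psi_{i,T}(F_\bot)\cdot\bar{\mathbf{n}}$ varies along $\overline{DE}$; consequently both the sign of the $\mathcal{I}^+$ terms entering the upper bound and the sharp constant $2h$ demand a genuine case analysis over the possible one-node and two-node cuts rather than a coordinate-free argument. Removing the square root in the third assertion, which rests entirely on the midpoint symmetry, is the most delicate of these computations.
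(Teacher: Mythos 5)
Your opening identity $1=\sum_{i\in\mathcal I}L(M_i)\,\nabla\psi_{i,T}(X)\cdot\bar{\bfn}$ is correct for all four elements (each $\Pi_T$ contains the affine functions and the Lagrange interpolation reproduces them), and it is a tidier starting point than the paper's, which simply computes $\bar{\bfgamma}^T\bfdelta$ in closed form configuration by configuration and uses the fact that it is affine in the position of $F_\bot$ along $\overline{DE}$ to reduce the range claim to the two endpoints. But the step you build everything on --- that each individual summand $L(M_i)\,\nabla\psi_{i,T}(F_\bot)\cdot\bar{\bfn}$, $i\in\mathcal I^-$, is non-negative --- is false in the two-node configurations. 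Take the bilinear element with $\mathcal I^-=\{1,2\}$, $D=(0,hd)\in b_4$, $E=(h,he)\in b_2$, $0<d<e<1$, so that $\bar{\bfn}=(d-e,1)/\sqrt{(d-e)^2+1}$. Then $L(A_1)=-hd/\sqrt{(d-e)^2+1}<0$ while $\nabla\psi_{1,T}(E)\cdot\bar{\bfn}=(1-e)(e-d)/\bigl(h\sqrt{(d-e)^2+1}\bigr)>0$, so the $i=1$ term is strictly negative for $F_\bot$ at or near $E$. The same failure occurs for $P_1$ independently of $F_\bot$: with $A_1,A_2\in T^-$, $D=(ha,ha)\in b_3$, $E=(h,hb)\in b_2$ and $b>a$, one gets $\nabla\psi_{1,T}\cdot\bar{\bfn}=(b-a)/\bigl(h\sqrt{(a-b)^2+(1-a)^2}\bigr)>0$ but $L(A_1)<0$. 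The conclusion $\bar{\bfgamma}^T\bfdelta\in[0,1]$ survives in these cases (e.g.\ for the bilinear two-node cut with $F_\bot=(1-s)D+sE$ one finds $\bar{\bfgamma}^T\bfdelta=\bigl(Q(d-e)^2+d(1-s)+es\bigr)/\bigl((d-e)^2+1\bigr)$ with $Q=1-(1-s)d-se\in[0,1]$), but only after the two terms are combined; so both your sign-based proof of the lower bound and, more seriously, your reduction of \eqref{eq_Ldelgam_1} to the per-node inequality $\bar{\bfgamma}^T\bfdelta\ge|L(M_i)|\,|\nabla\psi_{i,T}(F_\bot)\cdot\bar{\bfn}|$ collapse.

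Even where term-wise positivity does hold, the auxiliary estimate $|L(M_i)|<4h^2\,|\nabla\psi_{i,T}(F_\bot)\cdot\bar{\bfn}|$ that you want to feed into it cannot be true in general: in the bilinear two-node cut with $d=e$ and $F_\bot=E$ one has $\nabla\psi_{1,T}(E)\cdot\bar{\bfn}=0$ while $L(A_1)=-hd\ne0$. So \eqref{eq_Ldelgam_1} cannot be extracted one node at a time; as in the paper, it has to come from the full expression for $\bar{\bfgamma}^T\bfdelta$ (two terms in the two-node cases). Your explicit one-node computations --- the bound $de(d+e)/\bigl(h(d^2+e^2)\bigr)\le1$ and the midpoint inequality $\sqrt{d^2+e^2}+(d^2+e^2)/h<2(d+e)$ for \eqref{eq_Ldelgam_2} --- check out, but they cover only the configurations where your shortcut happens to work; the two-node case analysis, which is the bulk of the "similar calculation" the paper leaves to the reader, is exactly where your argument has a genuine gap.
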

\begin{proof}
We only give the proof for the case in which $\Pi_T$ is the rotated-$Q_1$ polynomial space, the interface element $T$ is such that $\mathcal{I}^-=\{ 1 \}$ and $\mathcal{I}^+=\{2,3,4\}$ with $D=(hd, 0)$ and $E=(0,he)$ for some $d\in[1/2,1]$ and $e\in[0,1/2]$. Similar arguments apply to all other cases. First, $\bar{\mathbf{ n}}=(e,d)/\sqrt{d^2+e^2}$. Hence, $F_{\bot}=(td,e(h-t))$ for some $t\in (0,h)$. By direct calculation, we have
\begin{equation}
\begin{split}
\label{}
 \bar{\bfgamma}^T \bfdelta= \frac{1}{h(d^2+e^2)}\left[ h(e-2d)-2(2t-h)de \right] (\frac{1}{2}-d)e,
\end{split} \label{eq_lemma5.1_1}
\end{equation}
Note that
\begin{equation*}
\label{}
\bar{\bfgamma}^T \bfdelta = \frac{e-2d+2de}{d^2+e^2}(\frac{1}{2}-d)e\in [0,1] \;\;\text{if} \;\; t=0 \;\;\; \text{and} \;\;\;
\bar{\bfgamma}^T \bfdelta = \frac{e-2d-2de}{d^2+e^2}(\frac{1}{2}-d)e\in [0,1] \;\;\text{if} \;\; t=h,
\end{equation*}
which leads to $\bar{\bfgamma}^T\bfdelta \in [0,1]$ because $\bar{\bfgamma}^T \bfdelta$ is a linear function of $t$ according to \eqref{eq_lemma5.1_1}. And \eqref{eq_Ldelgam_1} and \eqref{eq_Ldelgam_2} follow from similar calculation.
\end{proof}


\begin{lemma}
\label{lemma4_4}
For $h$ small enough, we have
\begin{equation}
\label{eqn4_22}
1+\mu \,\bfgamma^T\bfdelta 
  \geqslant 1- 4\mu\sqrt{\bar{\kappa}}.
\end{equation}
And for the linear and Crouzeix-Raviart $\psi_{i, T}, i\in\mathcal{I}$, if $F$ is chosen such that $\mathbf{n}(F)=\bar{\mathbf{ n}}$, then
\begin{equation}
\label{eq_lower_bound_2}
1+\mu\bfgamma^T\bfdelta=1+\mu\bar{\bfgamma}^T\bfdelta\geqslant1.
\end{equation}
In addition, for the bilinear $\psi_{i, T}, i\in\mathcal{I}$, if $F$ is chosen such that $F_{\bot}$ is the midpoint of $\overline{DE}$, then
\begin{equation}
\label{eq_lower_bound_3}
1+\mu\bfgamma^T\bfdelta\geqslant1+\mu\bar{\bfgamma}^T\bfdelta(1-4\sqrt{2}\sqrt{\bar{\kappa}}).
\end{equation}
\end{lemma}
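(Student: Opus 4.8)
The three inequalities all concern the scalar $1+\mu\,\bfgamma^T\bfdelta$ appearing in the Sherman--Morrison matrix $I+\mu\,\bfdelta\bfgamma^T$, and the plan is to control it by comparing $\bfgamma^T\bfdelta$ against the reference quantity $\bar{\bfgamma}^T\bfdelta$, for which Lemma \ref{lemma4_3} already supplies $\bar{\bfgamma}^T\bfdelta\in[0,1]$ together with the pointwise bounds on $L(M_i)$. Since $\mu\geqslant 0$ by \eqref{bound_for_k}, the case $\rho=1$ (hence $\mu=0$) is trivial, and for $\mu>0$ each claim is equivalent, after dividing by $\mu$, to a lower bound on $\bfgamma^T\bfdelta$. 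First I would write $\bfgamma^T\bfdelta=\bar{\bfgamma}^T\bfdelta+(\bfgamma-\bar{\bfgamma})^T\bfdelta$, reducing everything to an estimate of the perturbation $(\bfgamma-\bar{\bfgamma})^T\bfdelta=\sum_{i\in\mathcal{I}^-}\big(\nabla\psi_{i,T}(F)\cdot\mathbf{n}(F)-\nabla\psi_{i,T}(F_\bot)\cdot\bar{\mathbf{n}}\big)\,L(M_i)$.

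The core step is a componentwise bound on $\gamma_i-\bar\gamma_i$ obtained from the splitting
\[
\gamma_i-\bar\gamma_i=\nabla\psi_{i,T}(F)\cdot(\mathbf{n}(F)-\bar{\mathbf{n}})+\big(\nabla\psi_{i,T}(F)-\nabla\psi_{i,T}(F_\bot)\big)\cdot\bar{\mathbf{n}}.
\]
For the first term I would combine $\norm{\nabla\psi_{i,T}}_{\infty,T}\leqslant Ch^{-1}$ from \eqref{eqn_psi_iT_values_bounds} with the angle estimate \eqref{eq2_8a}, which under the mesh restriction \eqref{h_small_assump} gives $\norm{\mathbf{n}(F)-\bar{\mathbf{n}}}<\sqrt{\bar\kappa}$, so that this term is $O(h^{-1}\sqrt{\bar\kappa})$. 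The second term vanishes identically for the linear and Crouzeix--Raviart elements since their shape functions have constant gradients; for the bilinear (and rotated-$Q_1$) elements only the second-degree coefficient survives, so $\norm{\nabla\psi_{i,T}(F)-\nabla\psi_{i,T}(F_\bot)}\leqslant Ch^{-2}\norm{F-F_\bot}$, and the projection-distance estimate \eqref{eq2_6} together with \eqref{h_small_assump} again produces an $O(h^{-1}\sqrt{\bar\kappa})$ bound. In every case this yields $\abs{\gamma_i-\bar\gamma_i}\leqslant Ch^{-1}\sqrt{\bar\kappa}$ with an explicit constant coming from the normalized shape-function gradients.

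I would then assemble the three statements. For \eqref{eq_lower_bound_2} the choice $\mathbf{n}(F)=\bar{\mathbf{n}}$ makes the first term vanish too, so $\bfgamma=\bar{\bfgamma}$ exactly and $1+\mu\bfgamma^T\bfdelta=1+\mu\bar{\bfgamma}^T\bfdelta\geqslant 1$ because $\mu\geqslant 0$ and $\bar{\bfgamma}^T\bfdelta\geqslant 0$. For the general bound \eqref{eqn4_22} I would use the elementary geometric bound $\abs{L(M_i)}=\abs{\bar{\mathbf{n}}\cdot(M_i-D)}\leqslant\sqrt{2}h$ (valid for every element type), which combined with the componentwise estimate and the small cardinality of $\mathcal{I}^-$ gives $\abs{(\bfgamma-\bar{\bfgamma})^T\bfdelta}\leqslant 4\sqrt{\bar\kappa}$, whence $\bfgamma^T\bfdelta\geqslant\bar{\bfgamma}^T\bfdelta-4\sqrt{\bar\kappa}\geqslant-4\sqrt{\bar\kappa}$ and \eqref{eqn4_22} follows. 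For the sharper bilinear bound \eqref{eq_lower_bound_3}, the midpoint choice of $F_\bot$ activates the stronger estimate $\abs{L(M_i)}<2h\,\bar{\bfgamma}^T\bfdelta$ from \eqref{eq_Ldelgam_2}; inserting it gives $\abs{(\bfgamma-\bar{\bfgamma})^T\bfdelta}\leqslant 4\sqrt{2}\,\sqrt{\bar\kappa}\,\bar{\bfgamma}^T\bfdelta$, so that $\bfgamma^T\bfdelta\geqslant\bar{\bfgamma}^T\bfdelta\,(1-4\sqrt{2}\sqrt{\bar\kappa})$.

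The step I expect to be most delicate is the constant bookkeeping needed to land exactly on $4$ and $4\sqrt{2}$: this requires the precise normalized forms of the shape-function gradients on the reference square/triangle of side $h$, the exact constant in \eqref{eq2_8a}, and the correct matching of the two $L(M_i)$ bounds to the chosen $F$. A secondary technical point is the non-constant-gradient term for the bilinear and rotated-$Q_1$ elements, where the curvature $\kappa$ must be converted into $\sqrt{\bar\kappa}/h$ through \eqref{eq2_6} and \eqref{h_small_assump}; this conversion is the source of the extra $\sqrt{2}$ loss that appears in \eqref{eq_lower_bound_3}.
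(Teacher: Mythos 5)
Your proposal is correct and follows essentially the same route as the paper's proof: the same decomposition $\bfgamma^T\bfdelta=\bar{\bfgamma}^T\bfdelta+(\bfgamma-\bar{\bfgamma})^T\bfdelta$, the same two-term splitting of $\gamma_i-\bar{\gamma}_i$ bounded via \eqref{eqn_psi_iT_values_bounds}, \eqref{eq2_6} and \eqref{eq2_8a}, the same conversion of $\kappa h$ into $\sqrt{\bar{\kappa}}$ through \eqref{h_small_assump}, and the same use of Lemma \ref{lemma4_3} (in particular \eqref{eq_Ldelgam_2} for the bilinear midpoint case and the constancy of $\nabla\psi_{i,T}$ for the linear/C-R case). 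The constant bookkeeping you flag as delicate is exactly where the paper lands on $4$ and $4\sqrt{2}$, and your accounting matches it.
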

\begin{proof}
By Lemma \ref{lem2_3}, \eqref{eqn_psi_iT_values_bounds}, and Remark \ref{rem2_1}, we have
\begin{equation*}
\begin{split}
\label{}
& |\nabla\psi^P_{i,T}(F)\cdot\mathbf{ n}(F) -  \nabla\psi^P_{i,T}(F_{\bot})\cdot \bar{\bfn}| \\
\leqslant& |(\nabla\psi^P_{i,T}(F) -  \nabla\psi^P_{i,T}(F_{\bot}))\cdot\mathbf{ n}(F)|+|\nabla\psi^P_{i,T}(F_{\bot})\cdot(\mathbf{ n}(F)- \bar{\bfn})| \\
 \leqslant& \|(\nabla\psi^P_{i,T}(F) -  \nabla\psi^P_{i,T}(F_{\bot}))\|\cdot \|\mathbf{ n}(F)\|+ \|\nabla\psi^P_{i,T}(F_{\bot})\|\cdot \|(\mathbf{ n}(F)- \bar{\bfn})\| \\
\leqslant&  4(1+(1-2\epsilon^2)^{-3/2})\kappa, ~i \in \mathcal{I}
\end{split}
\end{equation*}
which implies $\norm{\bfgamma - \bar{\bfgamma}} \leq 2(1+2(1-2\epsilon^2)^{-3/2})\kappa$. For all the types of finite elements considered in this article, by $\|L\|_{\infty,T} \leq \sqrt{2}h$, we have $\norm{\bfdelta} \leq \sqrt{2}h$ and therefore,
\begin{equation*}
\begin{split}
\label{}
 \mu \bfgamma^T\bfdelta = \mu \bar{\bfgamma}^T\bfdelta + \mu(\bfgamma^T - \bar{\bfgamma}^T)\bfdelta \geq \mu \bar{\bfgamma}^T\bfdelta- 4\sqrt{2}(1+(1-2\epsilon^2)^{-3/2})\kappa \mu h,
\end{split}
\end{equation*}
which yields \eqref{eqn4_22} by the assumption \eqref{h_small_assump}. Furthermore, for linear finite elements, if $F$ is chosen such that $\mathbf{ n}(F)=\bar{\mathbf{ n}}$, then $\bar{\bfgamma}=\bfgamma$; thus Lemma \ref{lemma4_3} and \eqref{bound_for_k} imply $1+\mu \,\bar{\bfgamma}^T\bfdelta\geqslant1$. For the bilinear finite elements, if $F$ is chosen such that $F_{\bot}$ is  the mid point of $\overline{DE}$, then by \eqref{eq_Ldelgam_2} we have
\begin{eqnarray*}
 \mu \bfgamma^T\bfdelta = \mu \bar{\bfgamma}^T\bfdelta + \mu(\bfgamma^T - \bar{\bfgamma}^T)\bfdelta
&\geq& \mu \bar{\bfgamma}^T\bfdelta - \mu 4(1+(1-2\epsilon^2)^{-3/2})\kappa (2h\bar{\bfgamma}^T\bfdelta) \\
&\geq& \mu\bar{\bfgamma}^T\bfdelta ( 1 - 8(1+(1-2\epsilon^2)^{-3/2})\kappa h ),
\end{eqnarray*}
which leads to \eqref{eq_lower_bound_3} by the assumption \eqref{h_small_assump}.
\end{proof}

\begin{thm}[$\mathbf{ Unisolvence}$]
\label{thm4_1}
Let $\mathcal{T}_h$ be a mesh satisfying \eqref{h_small_assump} with $\bar{\kappa}$ specified therein for linear and Crouzeix-Raviart $\psi_{i, T}, i\in\mathcal{I}$ and
\begin{equation}
\sqrt{\bar{\kappa}} < \frac{1}{4\sqrt{2}}, \text{~~for bilinear~~} \psi_{i, T}, i\in\mathcal{I}, \label{rem_unisol_eq2_1}
\end{equation}
and for some $\lambda \in (0, 1)$
\begin{equation}
\sqrt{\bar{\kappa}} \leq \frac{\rho(1- \lambda)}{4 - (3 + \lambda)\rho}, \text{~~for rotated-$Q_1$~~} \psi_{i, T}, i\in\mathcal{I}. \label{rem_unisol_eq1_1}
\end{equation}
In addition, we assume $F$ and $F_{\bot}$ are chosen such that estimates given by Lemma \ref{lemma4_4} hold. Then, given any vector $v=(v_1,v_2,v_3,v_4)\in \mathbb{R}^4$ for the bilinear and rotated $Q_1$ case (or $v=(v_1,v_2,v_3)\in \mathbb{R}^3$ for the linear and Crouzeix-Raviart case), there exists one and only one IFE function $\phi^P_T$ in the form of \eqref{eq4_10} satisfying \eqref{eq4_8}-\eqref{eq4_12}.
\end{thm}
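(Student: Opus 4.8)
The plan is to reduce the existence-and-uniqueness claim to the nonsingularity of the single rank-one-perturbed matrix $I + \mu\,\bfdelta\bfgamma^T$ that already surfaced in the construction. Recall that once we expand on the larger sub-element (say $T^+$, with $|\mathcal{I}^+| \geq |\mathcal{I}^-|$) and impose \eqref{eq4_8}, every candidate IFE function is forced into the form \eqref{eq4_13}, and the gradient jump condition \eqref{eq4_9} pins down $c_0$ through \eqref{eqn4_18}. Substituting this and enforcing $\phi^{P\,-}_T(M_j)=v_j$ for $j\in\mathcal{I}^-$ collapses everything to the linear system \eqref{eqn4_15}, whose right-hand side $\bfb$ in \eqref{eqn4_17} is an explicit, hence well-defined, function of the data $v$. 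Thus $\phi^P_T$ exists and is unique for every $v$ if and only if the coefficient matrix $I + \mu\,\bfdelta\bfgamma^T$ is invertible; the scalar $c_0$ is then recovered uniquely from the computed $\bfc$ via \eqref{eqn4_18}. Note this argument is agnostic to the size of $\mathcal{I}^-$, so it covers all four element types uniformly.

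First I would invoke the matrix determinant lemma (the determinant form of the Sherman--Morrison identity): for the rank-one update one has $\det(I + \mu\,\bfdelta\bfgamma^T) = 1 + \mu\,\bfgamma^T\bfdelta$. Hence it suffices to prove $1 + \mu\,\bfgamma^T\bfdelta \neq 0$, and in fact I would show it is bounded below by a positive constant, which is precisely the quantity estimated in Lemma \ref{lemma4_4}. This immediately disposes of two of the four cases. For the linear and Crouzeix--Raviart elements, choosing $F$ with $\mathbf{n}(F)=\bar{\mathbf{n}}$ forces $\bfgamma=\bar{\bfgamma}$, so \eqref{eq_lower_bound_2} together with $\mu\geq 0$ and $\bar{\bfgamma}^T\bfdelta\in[0,1]$ from Lemma \ref{lemma4_3} yields $1+\mu\,\bfgamma^T\bfdelta\geq 1>0$. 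For the bilinear element, choosing $F_\bot$ to be the midpoint of $\overline{DE}$ and invoking \eqref{eq_lower_bound_3}, the standing assumption \eqref{rem_unisol_eq2_1} that $\sqrt{\bar\kappa}<1/(4\sqrt2)$ makes the factor $1-4\sqrt2\,\sqrt{\bar\kappa}$ positive; combined again with $\mu\geq 0$ and $\bar{\bfgamma}^T\bfdelta\geq 0$, this gives $1+\mu\,\bfgamma^T\bfdelta\geq 1>0$.

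The rotated-$Q_1$ case is where the real work lies, since no special placement of $F$ gives $\bfgamma=\bar{\bfgamma}$ and the only available lower bound is the crude \eqref{eqn4_22}, namely $1+\mu\,\bfgamma^T\bfdelta\geq 1-4\mu\sqrt{\bar\kappa}$. Here I would bound $\mu$ via \eqref{bound_for_k}, $\mu\leq(\tfrac1\rho-1)\tfrac{1}{1-\bar\kappa}$, and then use $\bar\kappa\leq\sqrt{\bar\kappa}$ (valid since $\bar\kappa\in(0,1]$) to replace the denominator, obtaining $\mu\leq \frac{1-\rho}{\rho(1-\sqrt{\bar\kappa})}$. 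Demanding $1-4\mu\sqrt{\bar\kappa}\geq\lambda$ for the prescribed $\lambda\in(0,1)$, clearing denominators, and solving the resulting linear inequality in $\sqrt{\bar\kappa}$ produces exactly the threshold $\sqrt{\bar\kappa}\leq \rho(1-\lambda)/(4-(3+\lambda)\rho)$ of \eqref{rem_unisol_eq1_1}. The main obstacle is therefore this single bookkeeping computation: one must track the competition between the contrast $\rho$ (which inflates $\mu$ as $\rho\to 0$) and the angle allowance $\bar\kappa$, and verify that the stated condition genuinely forces $1+\mu\,\bfgamma^T\bfdelta\geq\lambda>0$. With the determinant positive in every case, the coefficient matrix is nonsingular, so $\bfc$ is uniquely determined by \eqref{eqn4_15}, and with it $c_0$ from \eqref{eqn4_18} and hence the full IFE function $\phi^P_T$, which completes the proof.
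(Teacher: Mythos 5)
Your proposal is correct and follows essentially the same route as the paper: reduce unisolvence to the nonsingularity of the Sherman--Morrison matrix $I+\mu\,\bfdelta\bfgamma^T$, then verify $1+\mu\,\bfgamma^T\bfdelta\neq 0$ case by case from Lemma \ref{lemma4_4}, with \eqref{eq_lower_bound_2} handling linear/Crouzeix--Raviart, \eqref{eq_lower_bound_3} plus \eqref{rem_unisol_eq2_1} handling bilinear, and \eqref{eqn4_22} plus \eqref{bound_for_k} and \eqref{rem_unisol_eq1_1} handling rotated-$Q_1$. Your explicit algebra deriving the threshold $\sqrt{\bar\kappa}\leq\rho(1-\lambda)/(4-(3+\lambda)\rho)$ is a slightly more detailed (and valid) version of the inequality chain the paper simply asserts in \eqref{rem_unisol_eq3}.
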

\begin{proof}
For linear and Crouzeix-Raviart $\psi_{i, T}, i\in\mathcal{I}$, by \eqref{eq_lower_bound_2} in Lemma \ref{lemma4_4}, we have $1+\mu \bfgamma^T\bfdelta \neq 0$.
For bilinear $\psi_{i, T}, i\in\mathcal{I}$, by \eqref{rem_unisol_eq2_1}, we have
$
1 - 4\sqrt{2}\sqrt{\bar{\kappa}} > 0
$
which leads to $1+\mu \bfgamma^T\bfdelta \neq 0$ because of \eqref{eq_lower_bound_3} in Lemma \ref{lemma4_4}. Similarly,
for rotated-$Q_1$ $\psi_{i, T}, i\in\mathcal{I}$, by \eqref{rem_unisol_eq1_1}, we have
\begin{align}
1 - 4\mu \sqrt{\bar{\kappa}} \geq 1-\frac{4\sqrt{\bar{\kappa}}}{1-\bar{\kappa}}\left(\frac{1}{\rho}-1\right) \geq \lambda \label{rem_unisol_eq3}
\end{align}
which, by \eqref{eqn4_22} in Lemma \ref{lemma4_4}, leads to $1+\mu \bfgamma^T\bfdelta \neq 0$ again.
Hence, by the well known results about the \textit{Sherman-Morrison} formula, the matrix in the linear system \eqref{eqn4_15} is nonsingular which together with \eqref{eqn4_18} lead to the existence and uniqueness for coefficients $c_i, i \in \mathcal{I}^-$ and $c_0$ of $\phi_T^P(X)$.
\end{proof}

\begin{rem}
\label{rem_unisol}
Theorem \ref{thm4_1} provides guidelines on the choice for the angle allowance parameter $\bar{\kappa}$ needed in \eqref{h_small_assump} for
bilinear and rotated-$Q_1$ $\psi_{i, T}, i\in\mathcal{I}$. In the bilinear case, condition \eqref{rem_unisol_eq2_1} suggests an upper bound for $\bar{\kappa}$
which is nevertheless independent of $\rho$. In the rotated-$Q_1$ case, condition \eqref{rem_unisol_eq1_1} leads to the following upper bound for $\bar{\kappa}$:
$\sqrt{\bar{\kappa}}<\frac{\rho}{4- 3\rho}$
which depends on $\rho$, and this restriction on $\bar{\kappa}$ becomes severer when $\rho$ approaches $0$.
\end{rem}

\begin{rem}
\label{rem4_1}
Under the conditions given in Theorem \ref{thm4_1}, we can apply the \textit{Sherman-Morrison} formula to express the solution to \eqref{eqn4_15} explicitly as
\begin{equation}
\label{eqn4_23}
\bfc = \bfb - \mu \frac{(\bfgamma^T \bfb)\bfdelta}{1 + \mu \bfgamma^T\bfdelta}
\end{equation}
which facilitates both analysis and computation for these IFE spaces.
\end{rem}


On each interface element $T$, Theorem \ref{thm4_1} guarantees the existence and uniqueness of the IFE shape functions $\phi^P_{i,T}$, $i\in \mathcal{I}$
satisfying \eqref{eq4_10}-\eqref{eq4_9} such that
\begin{equation}
\label{eq4_21}
\phi^P_{i,T}(M_j)=\delta_{ij},~~i, j \in \mathcal{I},
\end{equation}
where $\delta_{ij}$ is the Kronecker delta function. Therefore, we can define the local IFE space on each interface element $T\in\mathcal{T}^i_h$ as
\begin{equation}
\label{eq4_22}
S^{P}_h(T)=\textrm{Span}\{ \phi^P_{i,T}:\;i\in \mathcal{I} \}.
\end{equation}



\commentout{
As in \cite{2013ZhangTHESIS}, depending on whether the interface cuts two adjacent or opposite edges, there are two types of interface elements.
According the location of $M_i, i \in \mathcal{I}$, a Type I element can be in $3$ cases and a Type II element can be in $2$ cases, see Figure \ref{fig_noncon_P_TypeI} and Figure \ref{fig_noncon_P_TypeII} for
illustrations of typical configurations.

 Let $F$ be an arbitrary point on $\Gamma\cap T$. We emphasize that, once chosen, $F$ should be fixed for each interface element.
\begin{figure}[H]
\centering
\includegraphics[width=6in]{figure5_1-eps-converted-to.pdf}
\caption{ \textbf{Type I} interface element and 3 cases }
\label{fig_noncon_P_TypeI}
\end{figure}

\begin{figure}[H]
\centering
\includegraphics[width=4in]{figure5_2-eps-converted-to.pdf}
\caption{ \textbf{Type II} interface element and 2 cases }
\label{fig_noncon_P_TypeII}
\end{figure}
}



\subsection{Properties of the IFE Shape Functions}

In this section, we present some fundamental properties of IFE shape functions.

\begin{thm}[Bounds of IFE shape functions]
\label{thm4_2}
Under the conditions given in Theorem \ref{thm4_1}, we have the following estimates:
\begin{itemize}
\item
For rotated-$Q_1$ and Crouzeix-Raviart $\phi^P_{i,T}$, $i\in \mathcal{I},~\forall \,T \in \mathcal{T}^i_h$,
\begin{equation}
\label{eq4_24}
|\phi^P_{i,T}|_{k,\infty,T}\leqslant  \frac{C}{\rho}h^{-k}, k= 0, 1, 
\end{equation}
where $C$ depends also on $\lambda$ for rotated-$Q_1$ case.
\item
For linear $\phi^P_{i,T}$, $i\in \mathcal{I}, ~\forall \,T \in \mathcal{T}^i_h$,
\begin{equation}
\label{linear_basis_bounds}
|\phi^P_{i,T}|_{k,\infty,T^+}\leqslant  \frac{C}{\sqrt{\rho}}h^{-k}, ~~|\phi^P_{i,T}|_{k,\infty,T^-}\leqslant  \frac{C}{\rho}h^{-k}, ~~ k=0,1.
\end{equation}
\item
For bilinear $\phi^P_{i,T}$, $i\in \mathcal{I}, ~\forall \,T \in \mathcal{T}^i_h$,
\begin{equation}
\label{bilinear_basis_bounds_2}
|\phi^P_{i,T}|_{k,\infty,T^+}\leqslant  Ch^{-k}, ~~~ |\phi^P_{i,T}|_{k,\infty,T^-}\leqslant  \frac{C}{\rho}h^{-k}, ~~ k=0,1.
\end{equation}
\end{itemize}
\end{thm}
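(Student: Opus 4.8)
The plan is to reduce everything to bounds on the coefficient vector $\bfc$ and the scalar $c_0$ in the representation \eqref{eq4_13} of the shape function $\phi^P_{i,T}$ (so the data is $v=e_j$ with $\|v\|\le C$). Using \eqref{eqn_psi_iT_values_bounds} together with $\|L\|_{\infty,T}\le\sqrt2\,h$ and $\|\nabla L\|_{\infty,T}=1$, the representation $\phi^{P\,+}_T=\sum_{i\in\mathcal I^-}c_i\psi^P_{i,T}+\sum_{i\in\mathcal I^+}v_i\psi^P_{i,T}$ on $T^+$ and $\phi^{P\,-}_T=\phi^{P\,+}_T+c_0L$ on $T^-$ immediately give, for $k=0,1$,
\[
|\phi^P_{i,T}|_{k,\infty,T^+}\le Ch^{-k}\big(\|\bfc\|+1\big),\qquad
|\phi^P_{i,T}|_{k,\infty,T^-}\le Ch^{-k}\big(\|\bfc\|+1\big)+|c_0|\,h^{1-k}.
\]
Thus the theorem follows once the polynomial-dependent bounds on $\|\bfc\|$ (sharper on $T^+$) and on $|c_0|$ are established.

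Second, I would simplify the Sherman-Morrison solution \eqref{eqn4_23}. Writing $\bfb=\mathbf{v}^--\mu\,\omega\,\bfdelta$ with $\omega=\sum_{j\in\mathcal I^+}\nabla\psi^P_{j,T}(F)\cdot\mathbf n(F)\,v_j$ (from \eqref{eqn4_17}), an algebraic cancellation collapses \eqref{eqn4_23} and \eqref{eqn4_18} into the clean forms
\[
c_0=\frac{\mu\,\sigma}{1+\mu\,\bfgamma^T\bfdelta},\qquad
\bfc=\mathbf{v}^--c_0\,\bfdelta,\qquad
\sigma=\sum_{i\in\mathcal I}\nabla\psi^P_{i,T}(F)\cdot\mathbf n(F)\,v_i,
\]
where $\sigma=\nabla\big(\sum_{i\in\mathcal I}v_i\psi^P_{i,T}\big)(F)\cdot\mathbf n(F)$ is the normal derivative of the ordinary nodal interpolant, so $|\sigma|\le Ch^{-1}$ by \eqref{eqn_psi_iT_values_bounds}. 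This step is crucial: it shows that $c_0$ carries a single factor $\mu/(1+\mu\bfgamma^T\bfdelta)$ rather than the $\mu^2$ one would fear from multiplying $\bfb$ with $\bfgamma^T\bfb$, and it reduces $\|\bfc\|$ to $1+|c_0|\,\|\bfdelta\|$.

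Third, I would bound $|c_0|$ and $|c_0|\,\|\bfdelta\|$ by inserting the denominator lower bounds of Lemma \ref{lemma4_4}, the estimate $0\le\mu\le C/\rho$ from \eqref{bound_for_k}, and the polynomial-specific estimates on $\bfdelta$ from Lemma \ref{lemma4_3}. Since $1+\mu\bfgamma^T\bfdelta$ is bounded below by $1$ (linear, C-R, bilinear) or by $\lambda$ (rotated-$Q_1$), one gets $|c_0|\le C\mu h^{-1}\le (C/\rho)h^{-1}$ in every case, which with the reduction above yields all the $T^-$ bounds. The sharper $T^+$ bounds come from matching $\|\bfdelta\|$ against the denominator: for bilinear, $\|\bfdelta\|\le Ch\,\bar{\bfgamma}^T\bfdelta$ from \eqref{eq_Ldelgam_2} together with $1+\mu\bfgamma^T\bfdelta\ge 1+c\,\mu\,\bar{\bfgamma}^T\bfdelta$ (with $c=1-4\sqrt2\sqrt{\bar\kappa}>0$) gives $|c_0|\,\|\bfdelta\|\le C\mu\bar{\bfgamma}^T\bfdelta/(1+c\mu\bar{\bfgamma}^T\bfdelta)\le C$, hence $\|\bfc\|\le C$ independent of $\rho$; for linear, $\|\bfdelta\|\le Ch\sqrt{\bar{\bfgamma}^T\bfdelta}$ from \eqref{eq_Ldelgam_1} with denominator $\ge 1$ gives $|c_0|\,\|\bfdelta\|\le C\mu\sqrt{\bar{\bfgamma}^T\bfdelta}/(1+\mu\bar{\bfgamma}^T\bfdelta)\le C\sqrt\mu\le C/\sqrt\rho$ after maximizing over $\bar{\bfgamma}^T\bfdelta\in[0,1]$; for C-R and rotated-$Q_1$ only $\|\bfdelta\|\le\sqrt2\,h$ is available, yielding $\|\bfc\|\le C/\rho$.

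The main obstacle is the sharp tracking of the $\rho$-dependence: a naive estimate of \eqref{eqn4_23} produces spurious factors of $1/\rho^2$, and the correct $T^+$ rates ($\rho$-free for bilinear, $1/\sqrt\rho$ for linear) emerge only after the cancellation in the second step and the delicate pairing of the power of $\bar{\bfgamma}^T\bfdelta$ from Lemma \ref{lemma4_3} with the matching denominator lower bound from Lemma \ref{lemma4_4}. In particular, the elementary optimization $\sup_{s\in[0,1]}\mu\sqrt s/(1+\mu s)\le C\sqrt\mu$ is exactly what produces the $1/\sqrt\rho$ in the linear case, while the exact $\bar{\bfgamma}^T\bfdelta$ cancellation produces the $\rho$-free bilinear bound.
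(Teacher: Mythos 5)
Your proposal is correct and follows essentially the same route as the paper: both reduce the theorem to bounds on $\bfc$ and $c_0$, collapse the Sherman--Morrison formula to $c_0=\mu(w+\bfgamma^T\mathbf{e}^-)/(1+\mu\,\bfgamma^T\bfdelta)$ and $\bfc=\mathbf{e}^--c_0\bfdelta$, and then pair the polynomial-specific estimates of Lemma \ref{lemma4_3} on $\bfdelta$ with the matching denominator lower bounds of Lemma \ref{lemma4_4}, including the same optimization $\sup_{s\in[0,1]}\mu\sqrt{s}/(1+\mu s)\leq C\max(1,\sqrt{\mu})$ for the linear case and the exact $\bar{\bfgamma}^T\bfdelta$ cancellation for the bilinear case. (Your bound $|c_0|\leq C\mu h^{-1}$ is in fact the correct scaling; the paper's displayed $|c_0|\leq C/\rho$ in \eqref{bounded_7} carries a spurious factor $\|\bfdelta\|$, which is immaterial for the final result.)
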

\begin{proof}
For convenience, we let $\mathbf{ e}=(e_{j})_{j\in\mathcal{I}}$ be the unit vector constructing the basis functions $\phi^P_{i,T}$, which could be $(1,0\cdots,0)$, $(0,1\cdots,0)$, $\cdots$, $(0,0\cdots,1)$, and $\mathbf{ e}^-=(e_{j})_{j\in\mathcal{I}^-}$. Let $w=\sum_{j\in\mathcal{I}^+} \nabla\psi^P_{j,T}(F)\cdot\mathbf{ n}(F)e_j$. Then \eqref{eqn4_17} implies $\bfb=\mathbf{ e}^- - \mu w \bfdelta$ and plugging it into the \textit{Sherman}-\textit{Morrison} formula \eqref{eqn4_23} leads to
\begin{equation}
\label{bounded_1}
\begin{split}
\bfc 
 = \mathbf{e}^- - \mu \frac{(w + \bfgamma^T\mathbf{ e}^-)\bfdelta }{1 + \mu \bfgamma^T\bfdelta },
\end{split}
\end{equation}
and plugging \eqref{bounded_1} into \eqref{eqn4_18} yields
\begin{equation}
\label{bounded_2}
c_0=\frac{\mu (w + \bfgamma^T\mathbf{ e}^-)}{1 + \mu \bfgamma^T\bfdelta}.
\end{equation}
Since
$
\|\nabla\psi^P_{i,T}\|_{\infty,T}\leqslant Ch^{-1}, ~i\in \mathcal{I}, ~\|L\|_{\infty,T}<Ch, ~|\nabla L\|_{\infty,T}<C \label{eq_thm4_2_1}
$
for some constants $C$ independent of the location of the interface and $\beta^{\pm}$, we have $\norm{\bfgamma} \leq Ch^{-1}, ~\norm{\bfdelta} \leq Ch$, $\norm{\bfb} \leq C$ and $|w|\leqslant Ch^{-1}$. \\

When $\phi^P_{i,T}$, $i\in \mathcal{I}$ are rotated-$Q_1$ polynomials, we can apply \eqref{bound_for_k} and \eqref{rem_unisol_eq1_1} to \eqref{bounded_1} to obtain:
\begin{equation}
\begin{split}
\label{eqn3_13}
\|\bfc\| \leq \|\mathbf{ e}^-\|+ \frac{1}{1- \bar{\kappa}}\left( \frac{1}{\rho}-1 \right) \frac{ (|w|+\| \bfgamma \| \| \mathbf{ e}^-\|)\|\bfdelta\| }{1-\frac{4\sqrt{\bar{\kappa}}}{1-\bar{\kappa}}\left( \frac{1}{\rho}-1 \right)} \leq \frac{C}{\rho}.
\end{split}
\end{equation}
Applying similar arguments to \eqref{bounded_2} we have $\|c_0\|<C/\rho$. Constant $C$ in these inequalities depends on $\lambda$. Finally, \eqref{eq4_24} follows from applying these bounds for $\bfc$ and $\bfc_0$ and the bounds for standard finite element basis functions $\psi_{i,T}$ to the formula of $\phi^P_{i,T}$ given in \eqref{eq4_13}. When $\phi^P_{i,T}$, $i\in \mathcal{I}$ are
Crouzeix-Raviart polynomials, we apply \eqref{bound_for_k} and \eqref{eq_lower_bound_2} to \eqref{bounded_1} and \eqref{bounded_2} to obtain:
\begin{eqnarray}
\|\bfc\| &\leq&  \|\mathbf{ e}^-\|+ \frac{1}{1- \bar{\kappa}}\left( \frac{1}{\rho}-1 \right) (|w|+\| \bfgamma \| \| \mathbf{ e}^-\|)\|\bfdelta\|  \leq \frac{C}{\rho}, \nonumber \\
| c_0 | &\leq& \frac{1}{1- \bar{\kappa}}\left( \frac{1}{\rho}-1 \right) (|w|+\| \bfgamma \| \| \mathbf{ e}^-\|)\|\bfdelta\| \leq  \frac{C}{\rho}. \label{bounded_7}
\end{eqnarray}
Then \eqref{eq4_24} in this case follows from the same arguments used above for the rotated-$Q_1$ case.

When $\phi^P_{i,T}$, $i\in \mathcal{I}$ are linear polynomials, by using \eqref{eq_Ldelgam_1} and Lemma \ref{lemma4_4} in \eqref{bounded_1}, we have
\begin{equation}
\label{bounded_3}
\| \bfc \| \leq C+C\frac{\mu\sqrt{\bar{\bfgamma}^T\bfdelta}}{1+\mu\bar{\bfgamma}^T\bfdelta} \leq C+C\max{ (1, \sqrt{\mu}) } \leq \frac{C}{\sqrt{\rho}}.
\end{equation}
Also, estimate in \eqref{bounded_7} is valid. Then, estimates in \eqref{linear_basis_bounds} follow from applying \eqref{bounded_3} and \eqref{bounded_7} to \eqref{eq4_13}.
%
%

Finally, when $\phi^P_{i,T}$, $i\in \mathcal{I}$ are bilinear polynomials, we can apply \eqref{eq_Ldelgam_2} and Lemma \ref{lemma4_4} in \eqref{bounded_1} to have
\begin{equation}
\label{bounded_5}
\| \bfc \|<C+C\frac{\mu\bar{\bfgamma}^T\bfdelta}{1 + (1-4\sqrt{2}\sqrt{\bar{\kappa}}) \mu\bar{\bfgamma}^T\bfdelta}<C,
\end{equation}
because $1-4\sqrt{2}\sqrt{\bar{\kappa}}$ is a positive constant by the condition \eqref{rem_unisol_eq2_1}.  Also, the estimate for $c_0$ is similar to \eqref{bounded_7}. Then, \eqref{bilinear_basis_bounds_2} follows from applying these bounds for $\bfc$ and $c_0$ to \eqref{eq4_13}.
\end{proof}

\begin{lemma} [Partition of Unity]
\label{lem:POU}
On every interface element $T \in \mathcal{T}_h^i$, we have
\begin{equation}
\label{eq4_25}
\sum_{i\in \mathcal{I}}\phi^P_{i,T}(X) = 1,~~\forall X \in T,
\end{equation}
\begin{equation}
\label{eq4_26}
\nabla \left( \sum_{i\in\mathcal{I}}\phi^P_{i,T}(X) \right)=\sum_{i\in\mathcal{I}}\nabla\phi^P_{i,T}(X)=0, ~~\forall X \in T.
\end{equation}
\end{lemma}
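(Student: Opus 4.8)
The plan is to invoke the uniqueness half of Theorem \ref{thm4_1}. The strategy is to recognize the constant function $\mathbf{1}(X)\equiv 1$ as the unique IFE function associated with the nodal data $v=(1,1,\ldots,1)$, and then to observe that $\sum_{i\in\mathcal{I}}\phi^P_{i,T}$ is another IFE function carrying exactly the same nodal data; uniqueness forces the two to coincide, which is precisely \eqref{eq4_25}.

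First I would verify that $\mathbf{1}$ qualifies as an IFE function of the form \eqref{eq4_10} satisfying \eqref{eq4_8}--\eqref{eq4_12} with $v_i=1$ for every $i\in\mathcal{I}$. Since $1\in\Pi_T$ for each of the polynomial families in \eqref{eq4_1}, taking $\phi^-_T=\phi^+_T=1$ puts $\mathbf{1}$ in the piecewise format \eqref{eq4_10}. The continuity condition \eqref{eq4_8} holds trivially because $1|_l=1|_l$, and in the rectangular case $d(1)=0=d(1)$. The flux condition \eqref{eq4_9} holds because $\nabla 1=0$, so both sides vanish. Finally $\mathbf{1}(M_i)=1=v_i$ for all $i$, so \eqref{eq4_12} is met.

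Next I would note that $\Phi:=\sum_{i\in\mathcal{I}}\phi^P_{i,T}$ is itself an IFE function for the same data. Each $\phi^P_{i,T}$ is piecewise in $\Pi_T$, and since $\Pi_T$ is a linear space the sum $\Phi$ is again piecewise in $\Pi_T$, hence of the form \eqref{eq4_10}. The constraints \eqref{eq4_8} and \eqref{eq4_9} are linear and homogeneous in the function, so they are preserved under summation, and thus $\Phi$ satisfies them. By the defining interpolation property \eqref{eq4_21}, $\Phi(M_j)=\sum_{i\in\mathcal{I}}\phi^P_{i,T}(M_j)=\sum_{i\in\mathcal{I}}\delta_{ij}=1$ for every $j\in\mathcal{I}$, so $\Phi$ also realizes the data $v=(1,\ldots,1)$.

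Both $\mathbf{1}$ and $\Phi$ are therefore IFE functions of the form \eqref{eq4_10} meeting \eqref{eq4_8}--\eqref{eq4_12} with identical nodal values, and the hypotheses of Theorem \ref{thm4_1} guarantee that such a function is unique. Hence $\Phi\equiv\mathbf{1}$, which is \eqref{eq4_25}, and identity \eqref{eq4_26} follows at once by differentiating \eqref{eq4_25}. There is no real obstacle beyond confirming that the constant function verifies every homogeneous jump and flux condition; the only point deserving care is that the admissibility conditions \eqref{eq4_8}--\eqref{eq4_9} are linear and homogeneous, so that both candidate functions genuinely lie in the space to which the uniqueness theorem applies.
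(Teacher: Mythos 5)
Your proposal is correct and follows essentially the same argument as the paper: both identify the constant function $1$ and the sum $\sum_{i\in\mathcal{I}}\phi^P_{i,T}$ as IFE functions of the form \eqref{eq4_10} satisfying \eqref{eq4_8}--\eqref{eq4_9} with identical nodal values, and then invoke the unisolvence of Theorem \ref{thm4_1} to conclude they coincide. Your version merely spells out the linearity and homogeneity checks that the paper leaves as ``obvious.''
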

\begin{proof}
Let $p(X) = \sum_{i\in \mathcal{I}}\phi^P_{i,T}(X)$ and $q(X) = 1$. Obviously both $p(X)$ and $q(X)$ are in the format of \eqref{eq4_10} and they satisfy \eqref{eq4_8} and
\eqref{eq4_9}. Furthermore, it is easy to verify that $p(M_i) = 1 = q(M_i), i \in \mathcal{I}$. Hence,
\begin{eqnarray*}
\sum_{i\in \mathcal{I}}\phi^P_{i,T}(X) = p(X) = q(X) = 1
\end{eqnarray*}
according to the unisolvence stated in Theorem \ref{thm4_1}. Property \eqref{eq4_26} follows from \eqref{eq4_25} directly.
\end{proof}

Now, on every $T \in \mathcal{T}_h^i$, choosing arbitrary points $\overline{X}_i \in l, i \in \mathcal{I}$, we can
construct two vector functions as follows:
\begin{subequations}
\label{eq4_27}
\begin{equation}\label{eq4_27a}
\bfLambda_1(X)=\sum_{i\in\mathcal{I}}(M_i-X)\phi^{P\;+}_{i,T}(X)+\sum_{i\in \mathcal{I}^-}(\overline{M}^+(F) - I)^T(M_i-\overline{X}_i)\phi^{P\;+}_{i,T}(X), \;\;\; \text{if}\; X\in T^+,
\end{equation}
\begin{equation}\label{eq4_27b}
\bfLambda_2(X)=\sum_{i\in\mathcal{I}}(M_i-X)\phi^{P\;-}_{i,T}(X)+\sum_{i\in \mathcal{I}^+}(\overline{M}^-(F) - I)^T(M_i-\overline{X}_i)\phi^{P\;-}_{i,T}(X), \;\;\; \text{if}\; X\in T^-.
\end{equation}
\end{subequations}
It follows from the Lemma \ref{lem2_5} that the two functions in \eqref{eq4_27} are independent of the location of $\overline{X}_i\in l, i \in \mathcal{I}$. Furthermore, from the partition of unity stated in Lemma \ref{lem:POU}, we have
\begin{subequations}
\label{eq4_27_modified}
\begin{align}
\label{}
   \bfLambda_1(X)&=\sum_{i\in \mathcal{I}}M_i\phi^{P\;+}_{i,T}(X) - X + \sum_{i \in \mathcal{I}^-}(\overline{M}^+(F) -I)^T(M_i-\overline{X}_i)\phi^{P\;+}_{i,T}(X),    \\
   \bfLambda_2(X)&=\sum_{i\in \mathcal{I}}M_i\phi^{P\;-}_{i,T}(X) - X + \sum_{i \in \mathcal{I}^+}(\overline{M}^-(F) -I)^T(M_i-\overline{X}_i)\phi^{P\;-}_{i,T}(X),
\end{align}
\end{subequations}
which imply that each component of $\bfLambda_1(X)$ and $\bfLambda_2(X)$ is a polynomial in $\Pi_T$ because $\phi^{P\;s}_{i,T}(X)\in \Pi_T, s = \pm$, for $i\in\mathcal{I}$.
We consider two auxiliary vector functions
\begin{equation}
\begin{split}
\label{eq4_29}
&\bfLambda^+(X)=\bfLambda_1(X), \;\; \text{and} \;\;\;\; \bfLambda^-(X)=(\overline{M}^+(F))^T\bfLambda_2(X).
\end{split}
\end{equation}
Let $d(\bfLambda^s), s = \pm$ be the vector of the coefficients of the second degree term in each component of $\bfLambda^s$.
\begin{lemma}
\label{lem4_2}
$\bfLambda^+$ and $\bfLambda^-$ are such that $d(\bfLambda^+)=d(\bfLambda^-)$.
\end{lemma}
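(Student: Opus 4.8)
The plan is to reduce the claim to a short linear-algebra identity by extracting the quadratic coefficient from each side and then exploiting the jump condition \eqref{eq4_8} together with Lemmas \ref{lem2_4}, \ref{lem2_5} and the partition of unity of Lemma \ref{lem:POU}. The first observation is that $d(\cdot)$ is linear on $\Pi_T$ and passes through both multiplication by constant vectors and left multiplication by constant matrices: if $A$ is a constant matrix and $\mathbf{v}$ is a vector-valued function whose components lie in $\Pi_T$, then each component of $A\mathbf{v}$ again lies in $\Pi_T$ and $d(A\mathbf{v}) = A\,d(\mathbf{v})$, since the sole quadratic monomial ($xy$ for bilinear, $x^2-y^2$ for rotated-$Q_1$) is carried linearly through linear combinations. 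For the linear and Crouzeix-Raviart elements $\Pi_T = \textrm{Span}\{1,x,y\}$ contains no quadratic term, so $d(\bfLambda^+) = d(\bfLambda^-) = 0$ and the statement is immediate; all the content lies in the bilinear and rotated-$Q_1$ cases.

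Next I would apply $d(\cdot)$ directly to \eqref{eq4_27_modified}. Since the $M_i$ and the vectors $(\overline{M}^\pm(F)-I)^T(M_i-\overline{X}_i)$ are constant and $-X$ contributes nothing quadratic, writing $d_i := d(\phi^{P+}_{i,T})$ gives
\begin{equation*}
d(\bfLambda^+) = \sum_{i\in\mathcal{I}} M_i d_i + \sum_{i\in\mathcal{I}^-}(\overline{M}^+(F)-I)^T(M_i-\overline{X}_i)\,d_i .
\end{equation*}
For $\bfLambda^- = (\overline{M}^+(F))^T\bfLambda_2$ I would use the jump condition \eqref{eq4_8}, which forces $d(\phi^{P-}_{i,T}) = d(\phi^{P+}_{i,T}) = d_i$, together with $d(A\mathbf{v}) = A\,d(\mathbf{v})$, to obtain
\begin{equation*}
d(\bfLambda^-) = (\overline{M}^+(F))^T\sum_{i\in\mathcal{I}} M_i d_i + (\overline{M}^+(F))^T\sum_{i\in\mathcal{I}^+}(\overline{M}^-(F)-I)^T(M_i-\overline{X}_i)\,d_i .
\end{equation*}
By Lemma \ref{lem2_4} we have $\overline{M}^+(F)\overline{M}^-(F)=I$, hence $(\overline{M}^+(F))^T(\overline{M}^-(F)-I)^T = I - (\overline{M}^+(F))^T = -(\overline{M}^+(F)-I)^T$, which rewrites the correction sum in $d(\bfLambda^-)$ over $\mathcal{I}^+$ with exactly the same prefactor $(\overline{M}^+(F)-I)^T$ that appears in $d(\bfLambda^+)$.

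Finally I would subtract the two expressions. Invoking Lemma \ref{lem2_5} I may take all $\overline{X}_i$ equal to a single point $\overline{X}_0\in l$, since the combinations $(\overline{M}^\pm(F)-I)^T(M_i-\overline{X}_i)$ are independent of that choice. Using $I-(\overline{M}^+(F))^T = -(\overline{M}^+(F)-I)^T$ to merge the $\sum_{i\in\mathcal{I}}M_id_i$ terms, and then collecting the $\mathcal{I}^-$ and $\mathcal{I}^+$ correction sums into one sum over $\mathcal{I}$, the $M_i$-contributions telescope and only the $\overline{X}_0$-part survives:
\begin{equation*}
d(\bfLambda^+) - d(\bfLambda^-) = -(\overline{M}^+(F)-I)^T\,\overline{X}_0 \sum_{i\in\mathcal{I}} d_i .
\end{equation*}
The remaining scalar $\sum_{i\in\mathcal{I}}d_i$ vanishes by the partition of unity \eqref{eq4_25}: on $T^+$ one has $\sum_{i\in\mathcal{I}}\phi^{P+}_{i,T}\equiv 1$, a constant whose quadratic coefficient is zero, so $\sum_{i\in\mathcal{I}}d_i = d(1) = 0$. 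Therefore $d(\bfLambda^+)=d(\bfLambda^-)$. I expect the main obstacle to be organizational rather than conceptual: one must keep straight which correction sum ranges over $\mathcal{I}^-$ versus $\mathcal{I}^+$ and which of $\overline{M}^\pm(F)$ it carries, and verify that the transpose-inverse relation from Lemma \ref{lem2_4} aligns the two prefactors precisely so that the telescoping leaves only the partition-of-unity term to be killed.
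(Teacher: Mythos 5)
Your proposal is correct and follows essentially the same route as the paper's proof: apply $d(\cdot)$ to \eqref{eq4_27_modified}, use $\overline{M}^+(F)\overline{M}^-(F)=I$ from Lemma \ref{lem2_4} to convert the prefactor on the $\mathcal{I}^+$ sum into $-(\overline{M}^+(F)-I)^T$, replace the $\overline{X}_i$ by a single $\overline{X}_0$ via Lemma \ref{lem2_5}, and kill the leftover term with $\sum_{i\in\mathcal{I}}d_i=0$ from the partition of unity. The only cosmetic difference is that you subtract the two expressions and show the difference vanishes, whereas the paper rewrites $d(\bfLambda^-)$ step by step until it coincides with $d(\bfLambda^+)$.
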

\begin{proof}
Let $d(\phi^{P\; +}_{i,T})=d(\phi^{P\; -}_{i,T})=d_i$, $i\in \mathcal{I}$. By the partition of unity, $\sum_{i\in\mathcal{I}}d_i=0$.
By using \eqref{eq2_10} given in Lemma \ref{lem2_4} and using Lemma \ref{lem2_5} to interchange $\overline{X}_i, i \in \mathcal{I}$ with a fixed $\overline{X} \in l$ and, we have
\begin{equation*}
\begin{split}
\label{}
d(\bfLambda^-)&=\sum_{i\in\mathcal{I}}(\overline{M}^+(F))^TM_id_i+\sum_{i\in \mathcal{I}^+}(I-\overline{M}^+(F))^T(M_i-\overline{X})d_i\\
=&\sum_{i\in\mathcal{I}^-}(\overline{M}^+(F))^TM_id_i+\sum_{i\in\mathcal{I}^+}M_id_i - (I-\overline{M}^+(F))^T\overline{X}\sum_{i\in \mathcal{I}^+}d_i\\
=&\sum_{i\in\mathcal{I}}M_id_i+\sum_{i\in\mathcal{I}^-}(\overline{M}^+(F) - I)^TM_i d_i
+ (I-\overline{M}^+(F))^T\overline{X}\sum_{i\in \mathcal{I}^-}d_i \\
=& \sum_{i\in\mathcal{I}}M_id_i+\sum_{i\in\mathcal{I}^-}(\overline{M}^+(F) - I)^T (M_i - \overline{X}_i) d_i
\end{split}
\end{equation*}
which is exactly $d(\bfLambda^+)$.
\end{proof}

\begin{lemma}
\label{lem4_3}
$\bfLambda^+$ and $\bfLambda^-$ satisfy the condition \eqref{eq4_8}, i.e., $\bfLambda^+(X)|_l=\bfLambda^-(X)|_l$.
\end{lemma}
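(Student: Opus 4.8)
The plan is to work directly on the line $l$, where condition \eqref{eq4_8} guarantees that the two branches of each IFE shape function coincide: for $X \in l$ one may write $\phi^{P\;+}_{i,T}(X) = \phi^{P\;-}_{i,T}(X) =: \phi_{i,T}(X)$. Since Lemma \ref{lem2_5} ensures that each vector $(\overline{M}^{\pm}(F)-I)^T(M_i - \overline{X}_i)$ is independent of the choice of $\overline{X}_i \in l$, I would first replace all of the points $\overline{X}_i$ by a single common point $\overline{X} \in l$. The goal is then to show that the difference $\bfLambda^+(X) - \bfLambda^-(X)$ vanishes identically for $X \in l$.

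Next I would expand $\bfLambda^-(X) = (\overline{M}^+(F))^T \bfLambda_2(X)$ and push the factor $(\overline{M}^+(F))^T$ through the two sums in \eqref{eq4_27b}. The crucial algebraic simplification comes from \eqref{eq2_10}: because $\overline{M}^-(F)$ and $\overline{M}^+(F)$ are mutual inverses, one gets $(\overline{M}^+(F))^T (\overline{M}^-(F))^T = (\overline{M}^-(F)\overline{M}^+(F))^T = I$, whence $(\overline{M}^+(F))^T(\overline{M}^-(F)-I)^T = I - (\overline{M}^+(F))^T = -(\overline{M}^+(F)-I)^T$. This converts the correction term summed over $\mathcal{I}^+$ in $\bfLambda_2$ into a $-(\overline{M}^+(F)-I)^T$-weighted sum over $\mathcal{I}^+$. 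Collecting the pieces together with the matrix-free leading sum, the difference should reduce to $(\overline{M}^+(F)-I)^T$ applied to a single combination of the vectors $(M_i - X)$ and $(M_i - \overline{X})$ weighted by $\phi_{i,T}$.

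The key step is then to evaluate that leftover combination using the partition of unity (Lemma \ref{lem:POU}): the matrix-free pieces $\sum_{i\in\mathcal{I}}(M_i - X)\phi_{i,T}(X)$ and $\sum_{i\in\mathcal{I}}(M_i - \overline{X})\phi_{i,T}(X)$ differ only by $\sum_{i\in\mathcal{I}}(X - \overline{X})\phi_{i,T}(X) = X - \overline{X}$ thanks to \eqref{eq4_25}. Thus I expect the difference to collapse to $\bfLambda^+(X) - \bfLambda^-(X) = (\overline{M}^+(F)-I)^T(X - \overline{X})$. Since both $X$ and $\overline{X}$ lie on $l$, the vector $X - \overline{X}$ is a scalar multiple of $\bar{\mathbf{t}}$, and the eigenvector identity \eqref{eq2_11} in Lemma \ref{lem2_4} gives $(\overline{M}^+(F))^T \bar{\mathbf{t}} = \bar{\mathbf{t}}$, i.e. $(\overline{M}^+(F)-I)^T\bar{\mathbf{t}} = 0$. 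Hence the difference vanishes and $\bfLambda^+(X)|_l = \bfLambda^-(X)|_l$.

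The main obstacle I anticipate is bookkeeping rather than conceptual: keeping the transposes and the inverse relationship straight while pushing $(\overline{M}^+(F))^T$ through the sums, and making sure the index sets $\mathcal{I}^+$, $\mathcal{I}^-$, and $\mathcal{I}$ line up so that the partition of unity can be applied to a full sum over $\mathcal{I}$. The conceptual heart is the observation that after cancellation only a tangential vector $X - \overline{X} \parallel \bar{\mathbf{t}}$ survives, and $\bar{\mathbf{t}}$ is precisely the eigenvector of $(\overline{M}^+(F))^T$ with eigenvalue $1$; this is what forces the residual to be zero on $l$.
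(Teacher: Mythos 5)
Your proposal is correct and follows essentially the same route as the paper: normalize all the $\overline{X}_i$ to a common $\overline{X}\in l$ via Lemma \ref{lem2_5}, use the continuity of $\phi^P_{i,T}$ across $l$ together with the inverse relation \eqref{eq2_10} to merge the correction sums, and finish with the partition of unity. The only cosmetic difference is that the paper evaluates at $X=\overline{X}$ so that no residual term ever appears, whereas you keep $X$ and $\overline{X}$ distinct and annihilate the leftover $(\overline{M}^+(F)-I)^T(X-\overline{X})$ with the eigenvector identity \eqref{eq2_11}; both are valid and rest on the same lemmas.
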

\begin{proof}
Since $\bfLambda^s(X), s = \pm$ are independent of points $\overline{X}_i \in l, i \in \mathcal{I}$, we can replace these points by an arbitrary $\overline{X} \in l$ without
changing values of $\bfLambda^s(X), s = \pm$. Then, since $\phi^P_{i,T}$ satisfies \eqref{eq4_8}, applying \eqref{eq2_10}, we have
\begin{equation*}
\begin{split}
\bfLambda^+(\overline{X})=&\sum_{i\in\mathcal{I}}(M_i-\overline{X})\phi^{P\;-}_{i,T}(\overline{X})+\sum_{i\in\mathcal{I}^-}(\overline{M}^+(F)-I)^T(M_i-\overline{X})\phi^{P\;-}_{i,T}(\overline{X})\\
=&\sum_{i\in\mathcal{I}^+}(M_i-\overline{X})\phi^{P\;-}_{i,T}(\overline{X}) + (\overline{M}^+(F))^T\sum_{i\in\mathcal{I}^-}(M_i-\overline{X})\phi^{P\;-}_{i,T}(\overline{X})\\
&= (\overline{M}^+(F))^T \left((\overline{M}^-(F))^T \sum_{i\in\mathcal{I}^+}(M_i-\overline{X})\phi^{P\;-}_{i,T}(\overline{X})+\sum_{i\in\mathcal{I}^-}(M_i-\overline{X})\phi^{P\;-}_{i,T}(\overline{X}) \right)\\
&=(\overline{M}^+(F))^T \left(\sum_{i\in\mathcal{I}}(M_i-\overline{X})\phi^{P\;-}_{i,T}(\overline{X})
+ (\overline{M}^-(F) - I)^T \sum_{i\in\mathcal{I}^+}(M_i-\overline{X})\phi^{P\;-}_{i,T}(\overline{X})\right)\\
\end{split}
\end{equation*}
which is exactly $\bfLambda^-(\overline{X})$.
\end{proof}

\begin{lemma}
\label{lem4_4}
$\bfLambda^+$ and $\bfLambda^-$ satisfy the condition \eqref{eq4_9}, i.e., $\beta^+\nabla\bfLambda^+(F)\cdot\mathbf{ n}(F)=\beta^-\nabla\bfLambda^-(F)\cdot\mathbf{ n}(F)$, where the gradient operator is understood as the gradient on each component.
\end{lemma}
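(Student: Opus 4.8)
The plan is to reduce this vector identity to two scalar facts about the shape functions: the flux jump condition satisfied by each $\phi^P_{i,T}$ individually, and the vanishing of the gradient sum in the partition of unity. First I would invoke Lemma~\ref{lem2_5} to replace all the points $\overline X_i$ appearing in \eqref{eq4_27} by a single common point $\overline X\in l$, which is legitimate because $\bfLambda_1,\bfLambda_2$ are independent of the choice of these points. Next, recalling that every shape function $\phi^P_{i,T}$ is itself an IFE function and hence satisfies \eqref{eq4_9}, I would set
\[
G_i:=\beta^+\nabla\phi^{P\,+}_{i,T}(F)\cdot\mathbf n(F)=\beta^-\nabla\phi^{P\,-}_{i,T}(F)\cdot\mathbf n(F),\qquad i\in\mathcal I,
\]
and observe, using the gradient partition of unity \eqref{eq4_26}, that $\sum_{i\in\mathcal I}G_i=\beta^+\bigl(\sum_{i\in\mathcal I}\nabla\phi^{P\,+}_{i,T}(F)\bigr)\cdot\mathbf n(F)=0$. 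This vanishing sum is the fact that will eventually close the argument.

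Then I would differentiate. Since each $(\overline M^+(F)-I)^T(M_i-\overline X)$ is a constant vector, applying the product rule to \eqref{eq4_27a} componentwise and using $\sum_i\phi^{P\,+}_{i,T}=1$ from Lemma~\ref{lem:POU} gives the Jacobian $\nabla\bfLambda_1(X)=\sum_{i\in\mathcal I}(M_i-X)(\nabla\phi^{P\,+}_{i,T}(X))^T-I+\sum_{i\in\mathcal I^-}(\overline M^+(F)-I)^T(M_i-\overline X)(\nabla\phi^{P\,+}_{i,T}(X))^T$, where the $-I$ arises from differentiating the factor $(M_i-X)$. Contracting with $\mathbf n(F)$, multiplying by $\beta^+$, and using $\bfLambda^+=\bfLambda_1$, I obtain $\beta^+\nabla\bfLambda^+(F)\cdot\mathbf n(F)=\sum_{i\in\mathcal I}(M_i-F)G_i-\beta^+\mathbf n(F)+\sum_{i\in\mathcal I^-}(\overline M^+(F)-I)^T(M_i-\overline X)G_i$. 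The same computation applied to $\bfLambda_2$, followed by the constant left multiplication by $A:=(\overline M^+(F))^T$ coming from \eqref{eq4_29}, produces $\beta^-\nabla\bfLambda^-(F)\cdot\mathbf n(F)=A\bigl[\sum_{i\in\mathcal I}(M_i-F)G_i-\beta^-\mathbf n(F)+\sum_{i\in\mathcal I^+}(\overline M^-(F)-I)^T(M_i-\overline X)G_i\bigr]$.

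Finally I would match the two expressions term by term using Lemma~\ref{lem2_4}. The eigenvector identity \eqref{eq2_11} gives $A\mathbf n(F)=\rho^{-1}\mathbf n(F)$, so $A(\beta^-\mathbf n(F))=\beta^-\rho^{-1}\mathbf n(F)=\beta^+\mathbf n(F)$, which cancels the two normal terms. The inverse relation \eqref{eq2_10} gives $A(\overline M^-(F)-I)^T=I-A$, which converts the $\mathcal I^+$ sum into $\sum_{i\in\mathcal I^+}(I-A)(M_i-\overline X)G_i$; combining this with the $\mathcal I^-$ sum over the full index set reduces the desired equality to $(I-A)(\overline X-F)\sum_{i\in\mathcal I}G_i=0$, which holds because $\sum_{i\in\mathcal I}G_i=0$.

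The only real obstacle is the bookkeeping in the middle step: keeping the rank-one matrices $(M_i-X)(\nabla\phi^{P\,\pm}_{i,T})^T$ and the constant factor $A$ straight so that the eigenvector and inverse identities apply cleanly, and correctly isolating the $-I$ contribution from differentiating $(M_i-X)$. Once the algebra is organized this way, the partition-of-unity identity $\sum_i G_i=0$ makes the conclusion immediate, so that $\beta^+\nabla\bfLambda^+(F)\cdot\mathbf n(F)=\beta^-\nabla\bfLambda^-(F)\cdot\mathbf n(F)$.
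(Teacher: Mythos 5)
Your proposal is correct and uses essentially the same ingredients as the paper's proof: Lemma \ref{lem2_5} to fix a common $\overline{X}\in l$, the flux condition \eqref{eq4_9} for each shape function, the gradient partition of unity \eqref{eq4_26}, and the inverse and eigenvector identities of Lemma \ref{lem2_4}. The only difference is organizational --- you compute both sides and cancel the difference via $\sum_{i\in\mathcal{I}}G_i=0$, whereas the paper transforms $\beta^+\nabla\bfLambda^+(F)\cdot\mathbf{n}(F)$ step by step into $\beta^-\nabla\bfLambda^-(F)\cdot\mathbf{n}(F)$ by factoring out $(\overline{M}^+(F))^T$ --- so the two arguments are equivalent.
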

\begin{proof}
Again, Lemma \ref{lem2_5} allows us to exchange $\overline{X}_i, i \in \mathcal{I}$ for an arbitrary point $\overline{X} \in l$ in the discussion below. By \eqref{eq4_9}, \eqref{eq4_26}, \eqref{eq2_10}, and \eqref{eq2_12} we have
\begin{equation*}
\begin{split}
\label{}
& \beta^+ \nabla \bfLambda^+(F)\cdot\mathbf{ n}(F) = \sum_{i\in\mathcal{I}}M_i\beta^-\nabla\phi^{P\;-}_{i,T}(F)\cdot\mathbf{ n}(F) \\
& \hspace{0.5in} + (\overline{M}^+(F) - I)^T\sum_{i\in\mathcal{I}^-}(M_i-\overline{X})\beta^-\nabla\phi^{P\;-}_{i,T}(F)\cdot\mathbf{ n}(F)
- \beta^+\mathbf{ n}(F) \\
=&\sum_{i\in\mathcal{I}^+}M_i\beta^-\nabla\phi^{P\;-}_{i,T}(F)\cdot\mathbf{ n}(F) + \sum_{i\in\mathcal{I}^-}\overline{X}\beta^-\nabla\phi^{P\;-}_{i,T}(F)\cdot\mathbf{ n}(F)  \\
& \hspace{0.5in}+ (\overline{M}^+(F))^T\sum_{i\in\mathcal{I}^-}(M_i-\overline{X})\beta^-\nabla\phi^{P\;-}_{i,T}(F)\cdot\mathbf{ n}(F)
- \beta^+\mathbf{ n}(F)\\
=&\sum_{i\in\mathcal{I}^+}(M_i-\overline{X})\beta^-\nabla\phi^{P\;-}_{i,T}(F)\cdot\mathbf{ n}(F)
+(\overline{M}^+(F))^T\sum_{i\in\mathcal{I}^-}(M_i-\overline{X})\beta^-\nabla\phi^{P\;-}_{i,T}(F)\cdot\mathbf{ n}(F) - \beta^+\mathbf{ n}(F)\\
=& (\overline{M}^+(F))^T \left( (\overline{M}^-(F))^T\sum_{i\in\mathcal{I}^+}(M_i-\overline{X})\beta^-\nabla\phi^{P\;-}_{i,T}(F)\cdot \bfn(F) \right. \\
&\hspace{1in} \left. +\sum_{i\in\mathcal{I}^-}(M_i-\overline{X})\beta^-\nabla\phi^{P\;-}_{i,T}(F)\cdot \bfn(F) - \beta^+(\overline{M}^-(F))^T\cdot \bfn(F) \right)\\
=& (\overline{M}^+(F))^T \left( (\overline{M}^-(F))^T\sum_{i\in\mathcal{I}^+}(M_i-\overline{X})\beta^-\nabla\phi^{P\;-}_{i,T}(F)\cdot \bfn(F) \right. \\
&\hspace{1in} \left. +\sum_{i\in\mathcal{I}^-}(M_i-\overline{X})\beta^-\nabla\phi^{P\;-}_{i,T}(F)\cdot \bfn(F) - \beta^-\cdot \bfn(F) \right)
= \beta^-\nabla\bfLambda^-(F)\cdot\mathbf{ n}(F).
\end{split}
\end{equation*}

\end{proof}

\begin{thm}
\label{thm4_3}
On every interface element $T \in \mathcal{T}_h^i$ we have
\begin{subequations}
\label{eq_thm4_3}
\begin{align}
&\sum_{i\in\mathcal{I}}(M_i-X)\phi^{P\;-}_{i,T}(X)+\sum_{i\in \mathcal{I}^+}(\overline{M}^-(F) - I)^T(M_i-\overline{X}_i)\phi^{P\;-}_{i,T}(X) = \bfLambda_1(X) = 0, ~\forall X\in T^-,
\label{eq_thm4_3_1} \\
&\sum_{i\in\mathcal{I}}(M_i-X)\phi^{P\;+}_{i,T}(X)+\sum_{i\in \mathcal{I}^-}(\overline{M}^+(F) - I)^T(M_i-\overline{X}_i)\phi^{P\;+}_{i,T}(X) = \bfLambda_2(X) = 0, ~\forall X\in T^+,
\label{eq_thm4_3_2}
\end{align}
\end{subequations}
and
\begin{subequations}
\label{eq_thm4_4}
\begin{align}
&\sum_{i\in\mathcal{I}}(M_i-X)\partial_d\phi^{P\;-}_{i,T}(X)+\sum_{i\in \mathcal{I}^+}\left[ (\overline{M}^-(F) - I)^T(M_i-\overline{X}_i)\partial_s\phi^{P\;-}_{i,T}(X) \right] - \mathbf{ e}_d = 0, \forall X \in T^-, \label{eq4_30} \\
&\sum_{i\in\mathcal{I}}(M_i-X)\partial_d\phi^{P\;+}_{i,T}(X)+\sum_{i\in \mathcal{I}^-}\left[ (\overline{M}^+(F) - I)^T(M_i-\overline{X}_i)\partial_s\phi^{P\;+}_{i,T}(X) \right] - \mathbf{ e}_d = 0, \forall X \in T^+, \label{eq4_31}
\end{align}
\end{subequations}
where $d = 1, 2$, $\partial_1 = \partial_x, \partial_2 = \partial_y$ are partial differential operators, and $\mathbf{ e}_d, d = 1, 2$ is the standard $d$-th unit vector in
$\mathbb{R}^2$.

\end{thm}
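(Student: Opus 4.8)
The plan is to recognize that the two auxiliary vector functions $\bfLambda^+$ and $\bfLambda^-$ built in \eqref{eq4_29} already assemble into a single componentwise IFE function, and then to pin that function down by its nodal values. By Lemmas \ref{lem4_2}, \ref{lem4_3} and \ref{lem4_4}, the piecewise polynomial equal to $\bfLambda^+$ on $T^+$ and to $\bfLambda^-$ on $T^-$ has each scalar component in $\Pi_T$ on each sub-element and satisfies the continuity condition \eqref{eq4_8} (with the matching of second-degree coefficients supplied by Lemma \ref{lem4_2} in the rectangular case) together with the flux condition \eqref{eq4_9}; hence it is a genuine vector-valued IFE function of the form \eqref{eq4_10}.

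I would then evaluate this function at the nodes. For $j\in\mathcal{I}^+$ we have $M_j\in T^+$, so the relevant value is $\bfLambda^+(M_j)=\bfLambda_1(M_j)$; using $\phi^{P\;+}_{i,T}(M_j)=\phi^P_{i,T}(M_j)=\delta_{ij}$ from \eqref{eq4_21}, the leading sum collapses to $M_j-M_j=0$, while every term of the correction sum vanishes because its index $i$ ranges over $\mathcal{I}^-$ and can never equal $j$. The same computation with the signs interchanged gives $\bfLambda_2(M_j)=0$ for $j\in\mathcal{I}^-$, and since $\bfLambda^-=(\overline{M}^+(F))^T\bfLambda_2$ with $\overline{M}^+(F)$ invertible, also $\bfLambda^-(M_j)=0$. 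Thus all nodal values of the assembled IFE function vanish.

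Applying the uniqueness part of the unisolvence Theorem \ref{thm4_1} componentwise --- the zero function being the only IFE function with vanishing nodal data --- forces $\bfLambda^+\equiv 0$ on $T^+$ and $\bfLambda^-\equiv 0$ on $T^-$, the latter being equivalent to $\bfLambda_2\equiv 0$ on $T^-$ by invertibility of $\overline{M}^+(F)$. As the components are polynomials in $\Pi_T$ vanishing on an open sub-element they vanish identically, which is exactly \eqref{eq_thm4_3}. To obtain the gradient identities \eqref{eq_thm4_4} I would differentiate each line of \eqref{eq_thm4_3}: since $M_i$, $\overline{X}_i$ and the constant matrices do not depend on $X$, the operator $\partial_d$ acts only on the shape functions and on the explicit $-X$, producing $\sum_{i\in\mathcal{I}}(M_i-X)\partial_d\phi^{P\;s}_{i,T}(X)$, the differentiated correction sum, and the extra term $-\mathbf{e}_d\sum_{i\in\mathcal{I}}\phi^{P\;s}_{i,T}(X)$; the partition of unity \eqref{eq4_25} turns the last factor into $1$, giving \eqref{eq4_30} and \eqref{eq4_31}.

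The argument is conceptually direct once Lemmas \ref{lem4_2}--\ref{lem4_4} are available, so I anticipate no serious obstacle beyond the index bookkeeping in the nodal-value step, where one must keep straight which correction sum runs over $\mathcal{I}^+$ versus $\mathcal{I}^-$ and must use the invertibility of $\overline{M}^+(F)$ to transfer the vanishing from $\bfLambda^-$ back to $\bfLambda_2$.
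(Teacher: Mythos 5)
Your proposal is correct and follows essentially the same route as the paper: assemble $\bfLambda^{\pm}$ into a piecewise vector polynomial satisfying \eqref{eq4_8}--\eqref{eq4_9} via Lemmas \ref{lem4_2}--\ref{lem4_4}, check the nodal values vanish, invoke the unisolvence of Theorem \ref{thm4_1} componentwise, and differentiate using the partition of unity for \eqref{eq_thm4_4}. You merely spell out the nodal-value computation and the invertibility transfer from $\bfLambda^-$ to $\bfLambda_2$, which the paper states without detail.
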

\begin{proof}
We define a piecewise vector polynomial on $T$ as
\begin{equation*}
\bfLambda(X)=
\left\{\begin{array}{cc}
\bfLambda^+(X) & \text{if} \; X\in T^+, \\
\bfLambda^-(X) & \text{if} \; X\in T^-.
\end{array}\right.
\end{equation*}
First, the restriction of each component of $\bfLambda$ to $T^s, s = \pm$ is a polynomial in $\Pi_T$. By Lemmas \ref{lem4_2}-\ref{lem4_4}, the components of
$\bfLambda$ also satisfy \eqref{eq4_8} and \eqref{eq4_9}. In addition, we can easily see that $\bfLambda(M_i)= {\bf 0}, i\in \mathcal{I}$. Therefore,
by the unisolvence stated in Theorem \ref{thm4_1}, we have $\bfLambda^+(X)=\bfLambda_1(X)\equiv0$ and $\bfLambda^-(X)=(\overline{M}^+(F))^T\bfLambda_2(X)\equiv0$. Since $(\overline{M}^+(F))^T$ is nonsingular, we have $\bfLambda_2(X)\equiv0$. Therefore \eqref{eq_thm4_3} is proved. The proof for \eqref{eq_thm4_4} can be accomplished by differentiating \eqref{eq_thm4_3} and applying \eqref{eq4_27_modified}.
\end{proof}


\subsection{Optimal Approximation Capabilities of IFE Spaces}

As usual, the local IFE spaces on elements can be employed to define the IFE function space globally on $\Omega$. As an example, we consider
\begin{equation}
\begin{split}
\label{eq4_23}
S_h(\Omega)=&\left\{ v\in L^2(\Omega):v|_{T}\in S^{P}_h(T); \right. \\
& \hspace{0.2in} \left. v|_{T_1}(M)=v|_{T_2}(M)~\forall M \in \mathcal{N}_h, \forall\,T_1, T_2 \in \mathcal{T}_h \text{~such that~} M \in T_1\cap T_2 \right\},
\end{split}
\end{equation}
where $\mathcal{N}_h$ is the set of nodes in the mesh $\mathcal{T}_h$, and this implies every IFE
function in this space is continuous at every node in the mesh. However, as observed in \cite{2015LinLinZhang}, every $v\in S_h(\Omega)$ is usually discontinuous across the interface edges. When the interface is a generic curve, the discontinuity of $v\in S_h(\Omega)$ also occurs along the interface curve because these IFE shape functions are defined according to the actual interface. These features ensure the $H^1$ regularity of $v\in S_h(\Omega)$ in the subdomain
of $\Omega$ minus the union of interface elements.

\par
We proceed to show that these IFE spaces formed above by linear, bilinear, CR and rotated-$Q_1$ polynomials have the optimal approximation property from the point of view how
well the interpolation of a function $u$ in these IFE spaces can approximate $u$. First we define the interpolation operator on an element $T \in \mathcal{T}_h$ as the mapping $I_{h,T}: C^0(T) \rightarrow S_h^P(T)$ such that
\begin{equation}
\label{eq4_33}
I^P_{h,T}u(X)= \begin{cases}
\sum_{i\in\mathcal{I}}u(M_i)\psi^P_{i,T}(X), & \text{if~} T \in \mathcal{T}_h^n, \\
\sum_{i\in\mathcal{I}}u(M_i)\phi^P_{i,T}(X), & \text{if~} T \in \mathcal{T}_h^i.
\end{cases}
\end{equation}
Furthermore, the global IFE interpolation $I^P_{h}: C^0(\Omega)\rightarrow S_h(\Omega)$ can be defined piecewisely as
\begin{equation}
\label{eq4_34}
(I^P_hu)|_{T}=I^P_{h,T}u, \;\;\; \forall T\in \mathcal{T}_h.
\end{equation}

On every non-interface element $T\in\mathcal{T}^n_h$, the standard scaling argument \cite{1994BrennerScott,1978Ciarlet,1992RanacherTurek}
yields the following error estimate for the local interpolation $I_{h,T}^Pu$ on $T$:
\begin{equation}
\label{eq4_32}
\| I^P_{h,T}u-u\|_{0,T}+h| I^P_{h,T}u-u|_{1,T}\leqslant Ch^2|u|_{2,T}, ~\forall u \in H^2(T).
\end{equation}
However, how to use the scaling argument to derive an error bound for the interpolation on an interface element is unclear because the local IFE space $S^{P}_h(T)$ is interface dependent and it is not even a subspace of $H^1(T)$ in general.  Instead, we will use the multi-point Taylor expansion method \cite{2009HeTHESIS,2008HeLinLin,2004LiLinLinRogers,1982Xu,2013ZhangTHESIS}
to derive estimates for the IFE interpolation error.

\begin{thm}
\label{thm4_5}
Let $T \in \mathcal{T}_h^i$, assume $u\in PC_{int}^2(T)$. Then for any $\overline{X}_i\in l, i \in \mathcal{I}$, we have
\begin{subequations}\label{eq4_35}
\begin{equation}
\label{eq4_35a}
I^P_{h,T}u(X)-u(X)=\sum_{i\in\mathcal{I}^{s'}}(E_i^s+F_i^s)\phi^P_{i,T}(X)+\sum_{i\in\mathcal{I}}R_i^s\phi^P_{i,T}(X),~\forall X \in T^s, ~s = \pm,
\end{equation}
\begin{equation}
\label{eq4_35b}
\partial_d( I^P_{h,T}u(X)-u(X))=\sum_{i\in\mathcal{I}^{s'}}(E_i^s + F_i^s)\partial_d\phi^P_{i,T}(X)+\sum_{i\in\mathcal{I}}R_i^s \partial_d\phi^P_{i,T}(X), ~\forall X \in T^s, ~s = \pm,
\end{equation}
\end{subequations}
where $d=1$ or $2$, $R_i^s$ are given by \eqref{eq3_2} and \eqref{eq3_4}, and
\begin{equation}
\label{eq4_37}
\begin{split}
&E_i^s = \left( (M^s(\widetilde{Y}_i)-\overline{M}^s(F))\nabla u^s(X) \right)\cdot (M_i-\widetilde{Y}_i), \;\;\; s = \pm, ~i\in\mathcal{I}^{s'}, \\
&F_i^s = -\left( (\overline{M}^s(F) - I)\nabla u^s(X) \right) \cdot (\widetilde{Y}_i-\overline{X}_i), \;\;\; s = \pm, ~i\in\mathcal{I}^{s'}.
\end{split}
\end{equation}
\end{thm}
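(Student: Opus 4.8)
The plan is to expand the interpolation error at an arbitrary $X\in T^s$ by first using the partition of unity to subtract $u^s(X)$ termwise, then inserting the multipoint Taylor expansions at the nodes, and finally applying the two vector identities of Theorem \ref{thm4_3} to convert the leftover gradient terms into the coefficients $E_i^s+F_i^s$ defined in \eqref{eq4_37}.

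First I would start from the definition \eqref{eq4_33} of $I^P_{h,T}u$ on an interface element and use the partition of unity \eqref{eq4_25} to write, for $X\in T^s$,
\[
I^P_{h,T}u(X)-u(X)=\sum_{i\in\mathcal{I}}\bigl(u(M_i)-u^s(X)\bigr)\phi^{P\,s}_{i,T}(X).
\]
For each index I would substitute the appropriate expansion: \eqref{eq3_1} when $i\in\mathcal{I}^s$ (so that $M_i$ and $X$ lie on the same side of $\Gamma$) and \eqref{eq3_3} when $i\in\mathcal{I}^{s'}$. This gives $u(M_i)-u^s(X)=\nabla u^s(X)\cdot(M_i-X)+R_i^s(X)$ for $i\in\mathcal{I}^s$, with the additional term $\bigl((M^s(\widetilde{Y}_i)-I)\nabla u^s(X)\bigr)\cdot(M_i-\widetilde{Y}_i)$ present when $i\in\mathcal{I}^{s'}$. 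Hence $I^P_{h,T}u-u$ splits into a gradient sum over all of $\mathcal{I}$, a jump-correction sum over $\mathcal{I}^{s'}$, and the remainder sum $\sum_{i\in\mathcal{I}}R_i^s\phi^{P\,s}_{i,T}$, which already matches the last term of \eqref{eq4_35a}.

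The key step is to eliminate the gradient sum. I would dot the vector identity \eqref{eq_thm4_3} of Theorem \ref{thm4_3} with $\nabla u^s(X)$; since that identity reads $\sum_{i\in\mathcal{I}}(M_i-X)\phi^{P\,s}_{i,T}(X)+\sum_{i\in\mathcal{I}^{s'}}(\overline{M}^s(F)-I)^T(M_i-\overline{X}_i)\phi^{P\,s}_{i,T}(X)=0$, and since $a^T(\overline{M}^s(F)-I)^T v=\bigl((\overline{M}^s(F)-I)a\bigr)\cdot v$, this yields
\[
\sum_{i\in\mathcal{I}}\bigl(\nabla u^s(X)\cdot(M_i-X)\bigr)\phi^{P\,s}_{i,T}(X)=-\sum_{i\in\mathcal{I}^{s'}}\bigl((\overline{M}^s(F)-I)\nabla u^s(X)\bigr)\cdot(M_i-\overline{X}_i)\phi^{P\,s}_{i,T}(X).
\]
Adding this to the jump-correction sum, the per-index combination is purely algebraic: writing $M^s(\widetilde{Y}_i)-I=(M^s(\widetilde{Y}_i)-\overline{M}^s(F))+(\overline{M}^s(F)-I)$ and $M_i-\overline{X}_i=(M_i-\widetilde{Y}_i)+(\widetilde{Y}_i-\overline{X}_i)$ makes the $\overline{M}^s(F)-I$ contributions telescope, leaving exactly $E_i^s+F_i^s$. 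This proves \eqref{eq4_35a}.

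For the derivative identity \eqref{eq4_35b} I would stress that it is \emph{not} the termwise derivative of \eqref{eq4_35a}, since the coefficients $E_i^s,F_i^s,R_i^s$ themselves depend on $X$ through $\nabla u^s(X)$ and the remainder integrals; instead one repeats the argument at the level of $\partial_d$. Starting from $\partial_d(I^P_{h,T}u-u)=\sum_{i\in\mathcal{I}}u(M_i)\partial_d\phi^{P\,s}_{i,T}(X)-\partial_d u^s(X)$, I would use \eqref{eq4_26} to insert the vanishing quantity $u^s(X)\sum_{i\in\mathcal{I}}\partial_d\phi^{P\,s}_{i,T}(X)=0$, and then use the derivative identity \eqref{eq_thm4_4}, dotted with $\nabla u^s(X)$, to replace $\partial_d u^s(X)=\nabla u^s(X)\cdot\mathbf{e}_d$ by precisely the gradient and $\overline{M}^s(F)-I$ sums carrying $\partial_d\phi^{P\,s}_{i,T}$ in place of $\phi^{P\,s}_{i,T}$. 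The same Taylor substitution and telescoping as above then deliver \eqref{eq4_35b}. The only genuine obstacle is the bookkeeping here: one must keep straight that Theorem \ref{thm4_3} is applied in its undifferentiated form \eqref{eq_thm4_3} for \eqref{eq4_35a} and in its differentiated form \eqref{eq_thm4_4} for \eqref{eq4_35b}, rather than differentiating the finished expansion, whose $X$-dependent coefficients would otherwise generate spurious terms.
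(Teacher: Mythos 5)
Your proposal is correct and follows essentially the same route as the paper's proof: substitute the multipoint Taylor expansions \eqref{eq3_1} and \eqref{eq3_3} into the nodal interpolant, use the partition of unity together with the identities \eqref{eq_thm4_3} (and their differentiated forms \eqref{eq_thm4_4} with \eqref{eq4_26} for the gradient case) to eliminate the gradient sum, and split $M_i-\overline{X}_i=(M_i-\widetilde{Y}_i)+(\widetilde{Y}_i-\overline{X}_i)$ to recover $E_i^s+F_i^s$. Your explicit warning that \eqref{eq4_35b} must be rederived from $\sum_i u(M_i)\partial_d\phi^P_{i,T}$ rather than obtained by termwise differentiation is exactly the point the paper's argument is built around.
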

\begin{proof}
For $X \in T^s, s = \pm$, substituting the expansion \eqref{eq3_1} and \eqref{eq3_3} into the IFE interpolation \eqref{eq4_33}, we have
\begin{equation}
\begin{split}
\label{eq4_38}
I^P_{h,T}u(X) &= u^s(X)\sum_{i\in\mathcal{I}}\phi^P_{i,T}(X)+ \nabla u^s(X)\cdot \sum_{i\in\mathcal{I}}(M_i-X)\phi^P_{i,T}(X)\\
+ &\sum_{i\in \mathcal{I}^{s'}}\left( \left( M^s(\widetilde{Y}_i)-I \right)\nabla u^-(X) \right)\cdot (M_i-\widetilde{Y}_i)\phi^P_{i,T}(X) + \sum_{i\in\mathcal{I}}R_i^s\phi^P_{i,T}(X),
~X \in T^s, ~s = \pm.
\end{split}
\end{equation}
From Theorem \ref{thm4_3}, we have
\begin{equation}
\begin{split}
\label{eq4_39}
\sum_{i\in\mathcal{I}}(M_i-X)\phi^P_{i,T}(X)=-\sum_{i\in \mathcal{I}^{s'}}(\overline{M}^s(F) - I)^T(M_i-\overline{X}_i)\phi^{P}_{i,T}(X), ~\forall X \in T^s, ~s = \pm.
\end{split}
\end{equation}
Then, applying \eqref{eq4_39} and the partition of unity to \eqref{eq4_38} leads to
\begin{equation}
\begin{split}
\label{eq4_40}
I^P_{h,T}u(X) &= u^s(X)- \sum_{i\in\mathcal{I}^{s'}}\left( (\overline{M}^s(F) - I)\nabla u^s(X) \right) \cdot(M_i-\overline{X}_i)\phi^P_{i,T}(X)\\
+& \sum_{i\in \mathcal{I}^{s'}}\left( \left( M^s(\widetilde{Y}_i)-I \right)\nabla u^s(X) \right)\cdot (M_i-\widetilde{Y}_i)\phi^P_{i,T}(X)+\sum_{i\in\mathcal{I}}R_i^s\phi^P_{i,T}(X),
~X \in T^s, ~s = \pm
\end{split}
\end{equation}
from which \eqref{eq4_35a} follows by using $M_i-\overline{X}_i=(M_i- \widetilde{Y}_i)+(\widetilde{Y}_i -\overline{X}_i)$. For \eqref{eq4_35b}, applying the expansions \eqref{eq3_1} and \eqref{eq3_3}
in $\partial_d I^P_{h,T}u(X)=\sum_{i\in\mathcal{I}}u(M_i) \partial_d\phi^P_{i,T}(X), d = 1, 2$, we have
\begin{equation}
\begin{split}
\label{eq4_42}
& \partial_dI^P_{h,T}u(X) = u^s(X)\sum_{i\in\mathcal{I}}\partial_d\phi^P_{i,T}(X)+ \nabla u^s(X)\cdot \sum_{i\in\mathcal{I}}(M_i-X)\partial_d\phi^P_{i,T}(X)\\
&~~+ \sum_{i\in \mathcal{I}^{s'}}\left( \left( M^s(\widetilde{Y}_i)-I \right)\nabla u^s(X)\right) \cdot (M_i-\widetilde{Y}_i)\partial_d\phi^P_{i,T}(X)+\sum_{i\in\mathcal{I}}R_i^s\partial_d\phi^P_{i,T}(X), ~\forall X \in T^s, ~s = \pm.
\end{split}
\end{equation}
Then, applying \eqref{eq4_26} and Theorem \ref{thm4_3} to \eqref{eq4_42} we have
\begin{equation*}
\begin{split}
&\partial_dI^P_{h,T}u(X) = \nabla u^s(X)\cdot \mathbf{ e}_d - \sum_{i\in\mathcal{I}^{s'}}\left( \left(\overline{M}^s(F) - I\right)\nabla u^s(X) \right)\cdot(M_i-\overline{X}_i)\partial_d\phi^P_{i,T}(X)\\
&~~+\sum_{i\in \mathcal{I}^{s'}}\left( \left( M^s(\widetilde{Y}_i)-I \right)\nabla u^s(X)\right)\cdot (M_i-\widetilde{Y}_i)\partial_d\phi^P_{i,T}(X)+\sum_{i\in\mathcal{I}}R_i^s\partial_x\phi^P_{i,T}(X), ~\forall X \in T^s, ~s = \pm \\
\end{split}
\end{equation*}
which leads to \eqref{eq4_35b} because $\nabla u^s(X)\cdot \mathbf{ e}_d=\partial_d u^s(X)$ and $M_i-\overline{X}_i=(M_i-\tilde{Y}_i)+(\tilde{Y}_i-\overline{X}_i)$.
\end{proof}

By an argument similar to that used in \cite{2013ZhangTHESIS}, we can estimate $E_i^s$ and $F_i^s$ in \eqref{eq4_37} by geometric properties established in Section 3.
\begin{lemma}
\label{lem4_5}
There exist constants $C>0$ independent of the interface location and $\beta^{\pm}$ such that the following estimates hold for
every $T\in\mathcal{T}^i_h$ and $u\in PC^2_{int}(T)$:
\begin{subequations}
\begin{equation}
\label{eq_lem4_5_1}
\| E_i^-\|_{0,T^-}\leqslant C\kappa h^2 |u|_{1,T^-}, \;\;\;\; i\in \mathcal{I}^{+}, ~~~~\| E_i^+\|_{0,T^+}\leqslant C\frac{\kappa}{\rho}h^2 |u|_{1,T^+}, \;\;\;\; i\in \mathcal{I}^{-}.
\end{equation}
\begin{equation}
\label{eq_lem4_5_2}
\| F_i^-\|_{0,T^-}\leqslant Ch^2 |u|_{1,T^-}, \;\;\;\; i\in \mathcal{I}^{+}, ~~~~\| F_i^+\|_{0,T^+}\leqslant \frac{C}{\rho}h^2 |u|_{1,T^+}, \;\;\;\; i\in \mathcal{I}^{-}.
\end{equation}
\end{subequations}
\end{lemma}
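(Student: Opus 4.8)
The plan is to treat both families in \eqref{eq4_37} as pointwise products of a matrix acting on $\nabla u^s$ with a short geometric vector, bound them pointwise by Cauchy--Schwarz, and then integrate over $T^s$, exactly as in the passage from Lemma \ref{lem3_2} to Lemma \ref{lem3_3}. For each fixed $X\in T^s$ one has the elementary bounds
\begin{equation*}
|E_i^s(X)|\leqslant \|M^s(\widetilde{Y}_i)-\overline{M}^s(F)\|\,\|\nabla u^s(X)\|\,\|M_i-\widetilde{Y}_i\|,\qquad
|F_i^s(X)|\leqslant \|\overline{M}^s(F)-I\|\,\|\nabla u^s(X)\|\,\|\widetilde{Y}_i-\overline{X}_i\|,
\end{equation*}
so the whole problem reduces to controlling two operator norms and two vector lengths. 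Once the non-gradient factors are bounded by a constant times the appropriate powers of $h$, $\kappa$ and $1/\rho$, squaring, applying Cauchy--Schwarz in the remaining $dX$ integral, and using $\int_{T^s}\|\nabla u^s(X)\|^2\,dX=|u|_{1,T^s}^2$ delivers each estimate.

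For the $E_i^s$ bounds \eqref{eq_lem4_5_1} I would invoke \eqref{eq2_13} of Lemma \ref{lem2_6} with $\widetilde{X}_1=F$ and $\widetilde{X}_2=\widetilde{Y}_i$, both lying on $\Gamma\cap T$, which gives $\|M^-(\widetilde{Y}_i)-\overline{M}^-(F)\|\leqslant C\kappa h$ and $\|M^+(\widetilde{Y}_i)-\overline{M}^+(F)\|\leqslant \frac{C\kappa}{\rho}h$. Since $\widetilde{Y}_i$ lies on the segment $\overline{XM_i}\subset T$, we have $\|M_i-\widetilde{Y}_i\|\leqslant\|M_i-X\|\leqslant\sqrt{2}\,h$. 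Multiplying these two factors yields the pointwise bounds $|E_i^-(X)|\leqslant C\kappa h^2\|\nabla u^-(X)\|$ for $i\in\mathcal{I}^+$ and $|E_i^+(X)|\leqslant \frac{C\kappa}{\rho}h^2\|\nabla u^+(X)\|$ for $i\in\mathcal{I}^-$, and integration over $T^s$ produces \eqref{eq_lem4_5_1}.

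For the $F_i^s$ bounds \eqref{eq_lem4_5_2} I would first exploit the freedom in Theorem \ref{thm4_5}: the identities hold for \emph{any} $\overline{X}_i\in l$, and $E_i^s$ does not involve $\overline{X}_i$, so I am free to take $\overline{X}_i$ to be the orthogonal projection of $\widetilde{Y}_i$ onto $l$. Then \eqref{eq2_6} gives $\|\widetilde{Y}_i-\overline{X}_i\|\leqslant 2(1-2\epsilon^2)^{-3/2}\kappa h^2$, which is $O(h^2)$. The operator norms of $\overline{M}^s(F)-I$ are read off from the eigenstructure in Lemma \ref{lem2_4}: $(\overline{M}^-(F))^T$ has eigenvalues $1$ and $\rho$, so $\overline{M}^-(F)-I$ has eigenvalues $0$ and $\rho-1$ with $|\rho-1|\leqslant 1$, whence $\|\overline{M}^-(F)-I\|\leqslant C$; likewise $(\overline{M}^+(F))^T$ has eigenvalues $1$ and $1/\rho$, giving $\|\overline{M}^+(F)-I\|\leqslant C/\rho$. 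The eigenvectors $\bar{\mathbf{t}}$ and $\mathbf{n}(F)$ are uniformly separated by \eqref{n_nbar_angle_lower_bnd}, so passing from eigenvalues to operator norms costs only a constant (alternatively this follows from \eqref{bounds_for_Mbar} and the triangle inequality). Combining the two factors and integrating yields \eqref{eq_lem4_5_2}.

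The only genuinely delicate point is bookkeeping the dependence on $\rho$: recognizing that the ``$-$'' quantities are bounded independently of $\rho$ while the ``$+$'' quantities carry a factor $1/\rho$. This asymmetry is dictated entirely by the eigenvalues $\rho$ versus $1/\rho$ of $\overline{M}^{\mp}$ in Lemma \ref{lem2_4} (and of $M^{\mp}$ through \eqref{bounds_for_Mbar}) together with the two-sided closeness estimate \eqref{eq2_13}. Everything else is the same Cauchy--Schwarz-then-integrate mechanism already used for the remainders $R_i^s$ in Lemma \ref{lem3_3}, so no new analytic difficulty arises; the curvature $\kappa$ enters the $E_i^s$ bounds through the matrix difference \eqref{eq2_13} and enters the $F_i^s$ bounds through the curve-to-chord distance \eqref{eq2_6}.
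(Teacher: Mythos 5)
Your proposal is correct and follows essentially the same route as the paper: a pointwise Cauchy--Schwarz factorization of $E_i^s$ and $F_i^s$, with the matrix difference controlled by \eqref{eq2_13} and $\|M_i-\widetilde{Y}_i\|\leqslant Ch$ for the $E$-terms, and the choice $\overline{X}_i=\widetilde{Y}_{i\bot}$ (justified by Lemma \ref{lem2_5}) combined with \eqref{eq2_6} and \eqref{bounds_for_Mbar} for the $F$-terms. The only cosmetic difference is that you derive $\|\overline{M}^s(F)-I\|$ from the eigenstructure in Lemma \ref{lem2_4}, whereas the paper simply cites \eqref{bounds_for_Mbar}; both yield the same $\rho$-dependence.
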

\begin{proof}
By $\|M_i-\widetilde{Y}_i\|\leqslant h$ and Lemma \ref{lem2_6}, we have
\begin{equation}
\|E_i^s\|_{0,T^s}\leqslant\|(M^s(\widetilde{Y}_i)-\overline{M}^s(F))\| \; \|\nabla u^s\|_{0,T^s} \; \|(M_i-\widetilde{Y}_j)\|
\end{equation}
which implies the \eqref{eq_lem4_5_1} by \eqref{eq2_13} in Lemma \ref{lem2_6}. By Lemma \ref{lem2_5}, $F_i^s$ is independent of the choice of $\overline{X}_i \in l$. Hence,
by taking $\overline{X}_i=\widetilde{Y}_{i\bot}$ in $F_i^s$ and applying Lemma \ref{lem2_3}, we have
\begin{equation}
\|F^s_i\|_{0,T^s}\leqslant\|\overline{M}^s(F) - I\| \; \|\nabla u^s\|_{0,T^s} \; \|\widetilde{Y}_i-\widetilde{Y}_{i\bot}\|
\end{equation}
which establishes \eqref{eq_lem4_5_2} by \eqref{bounds_for_Mbar}.
\end{proof}

Now we are ready to prove the main result in this section.
\begin{thm}
\label{thm4_6}
Assume all the conditions required by Theorem \ref{thm4_2} hold and $u\in PH^2_{int}(T)$. Then on every $T\in\mathcal{T}^i_h$ the following hold.
\begin{itemize}
\item
For rotated-$Q_1$ and Crouzeix-Raviart finite elements,
\begin{subequations}
\label{eq4_47}
\begin{equation}
\label{eq4_47_a}
\| I^P_{h,T}u-u\|_{0,T^-} + h| I^P_{h,T}u-u |_{1,T^-}\leqslant C\frac{1+\kappa}{\rho}h^2 (|u|_{1,T}+|u|_{2,T}),
\end{equation}
\begin{equation}
\label{eq4_47_a}
\| I^P_{h,T}u-u\|_{0,T^+} + h| I^P_{h,T}u-u |_{1,T^+}\leqslant C\frac{1+\kappa}{\rho^2}h^2 (|u|_{1,T}+|u|_{2,T}),
\end{equation}
\end{subequations}
where $C$ depends on $\lambda$ chosen for rotated-$Q_1$ case in \eqref{rem_unisol_eq1_1}.
\item
For linear finite elements,
\begin{subequations}
\label{linear_local_est}
\begin{equation}
\label{linear_local_est_a}
\| I^P_{h,T}u-u\|_{0,T^-} +  h| I^P_{h,T}u-u |_{1,T^-}\leqslant C\frac{1+\kappa}{\rho}h^2 (|u|_{1,T}+|u|_{2,T}),
\end{equation}
\begin{equation}
\label{linear_local_est_b}
\| I^P_{h,T}u-u\|_{0,T^+} + h| I^P_{h,T}u-u |_{1,T^+}\leqslant C\frac{1+\kappa}{\rho^{3/2}}h^2 (|u|_{1,T}+|u|_{2,T}).
\end{equation}
\end{subequations}

\item
For bilinear finite elements,
\begin{equation}
\label{bilinear_local_est}
\| I^P_{h,T}u-u\|_{0,T} +  h| I^P_{h,T}u-u |_{1,T}\leqslant C \frac{1+\kappa}{\rho}h^2 (|u|_{1,T}+|u|_{2,T}).
\end{equation}
\end{itemize}
\end{thm}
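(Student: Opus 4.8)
The plan is to read both estimates straight off the pointwise expansions established in Theorem \ref{thm4_5}, which already isolate the interpolation error into a sum of products of IFE shape functions with the geometric coefficients $E_i^s$, $F_i^s$ and the Taylor remainders $R_i^s$. Since each coefficient is an explicit $L^2$-controllable function of $X$ and each shape function is $L^\infty$-controllable, the whole argument reduces to the elementary bound $\|fg\|_{0,T^s}\le\|f\|_{\infty,T^s}\|g\|_{0,T^s}$ applied term by term, together with the triangle inequality over the finitely many (at most four) indices in $\mathcal{I}$. Because the statement is phrased for $u\in PH^2_{int}(T)$ while Theorem \ref{thm4_5} and Lemmas \ref{lem4_5}, \ref{lem3_3}, \ref{lem3_5} are stated for $u\in PC^2_{int}(T)$, I would first prove the estimates for $u\in PC^2_{int}(T)$ and then invoke Hypothesis \textbf{(H4)}, i.e. the density of $PC^2_{int}(T)$ in $PH^2_{int}(T)$, to pass to the general case; all constants produced below are independent of $u$, so this limiting step is routine.

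For the $L^2$ part I would fix $s=\pm$ and start from \eqref{eq4_35a}, splitting its right-hand side into the geometric sum $\sum_{i\in\mathcal{I}^{s'}}(E_i^s+F_i^s)\phi^P_{i,T}$ and the remainder sum $\sum_{i\in\mathcal{I}}R_i^s\phi^P_{i,T}$. For the first sum I would insert the bounds $\|E_i^s\|_{0,T^s}$, $\|F_i^s\|_{0,T^s}$ from Lemma \ref{lem4_5} and $\|\phi^P_{i,T}\|_{\infty,T^s}$ from Theorem \ref{thm4_2}; for the second sum I would use $\|R_i^s\|_{0,T^s}\le Ch^2|u|_{2,T}$ for $i\in\mathcal{I}^s$ from \eqref{eq3_8}, and $\|R_i^s\|_{0,T^s}\le\|R_{i1}^s\|_{0,T^s}+\|R_{i2}^s\|_{0,T^s}+\|R_{i3}^s\|_{0,T^s}$ for $i\in\mathcal{I}^{s'}$ bounded by \eqref{eq3_9} and \eqref{eq3_11}. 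Collecting the $|u|_{1,T}$-contributions from $E,F$ and the $|u|_{2,T}$-contributions from $R$, while tracking the $\rho$-powers separately on each side, yields the three families of estimates: for instance, for rotated-$Q_1$/CR the factor $\|\phi^P_{i,T}\|_{\infty,T^+}\le C/\rho$ multiplied against $\|E_i^+\|_{0,T^+}=O(\kappa\rho^{-1}h^2)$, $\|F_i^+\|_{0,T^+}=O(\rho^{-1}h^2)$ and $\|R_{i3}^+\|_{0,T^+}=O(\rho^{-1}h^2)$ produces the $\rho^{-2}$ of \eqref{eq4_47}, whereas on $T^-$ the better coefficient bounds leave only $\rho^{-1}$.

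For the $H^1$ semi-norm I would repeat the identical splitting on the differentiated identity \eqref{eq4_35b}: the coefficients $E_i^s$, $F_i^s$, $R_i^s$ are unchanged, so their $L^2$ bounds carry over verbatim, and the sole difference is that $\|\partial_d\phi^P_{i,T}\|_{\infty,T^s}$ now carries the extra factor $h^{-1}$ coming from the $k=1$ case of Theorem \ref{thm4_2}. This is precisely compensated by the prefactor $h$ in $h|I^P_{h,T}u-u|_{1,T^s}$, so the $H^1$ term matches the $L^2$ order $h^2$ with the same $\rho$-powers. For the bilinear element, where $\|\phi^P_{i,T}\|_{\infty,T^+}\le C$ and $\|\phi^P_{i,T}\|_{\infty,T^-}\le C/\rho$ combine with their respective coefficient bounds to give $\rho^{-1}$ on \emph{each} side, I would finally add the $T^-$ and $T^+$ contributions via $\|\cdot\|_{0,T}^2=\|\cdot\|_{0,T^-}^2+\|\cdot\|_{0,T^+}^2$ to obtain the single whole-element estimate \eqref{bilinear_local_est}.

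The genuine content of the theorem resides entirely in the earlier lemmas; the present step is essentially bookkeeping, and the only place demanding care is the correct multiplicative accounting of the $\rho$-powers, which are asymmetric between $T^+$ and $T^-$. The asymmetry arises because both the shape-function bounds (Theorem \ref{thm4_2}) and the coefficient bounds for $E_i^+$ and $R_{i3}^+$ (Lemmas \ref{lem4_5} and \ref{lem3_5}) degrade by a factor $1/\rho$ on the $+$ side; verifying that these factors multiply to exactly the exponents $\rho^{-2}$, $\rho^{-3/2}$, and $\rho^{-1}$ claimed in \eqref{eq4_47}, \eqref{linear_local_est_b}, and \eqref{bilinear_local_est} is the main thing to get right, and no estimate beyond those already proved is needed.
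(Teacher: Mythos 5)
Your proposal is correct and follows essentially the same route as the paper: both start from the expansions of Theorem \ref{thm4_5}, bound each term by the product of the $L^2$ bounds on $E_i^s$, $F_i^s$, $R_i^s$ (Lemmas \ref{lem4_5}, \ref{lem3_3}, \ref{lem3_5}) with the $L^\infty$ bounds on the shape functions from Theorem \ref{thm4_2}, and then invoke the density hypothesis \textbf{(H4)} to pass from $PC^2_{int}(T)$ to $PH^2_{int}(T)$. Your $\rho$-power accounting on $T^+$ versus $T^-$ matches the paper's stated exponents in all three cases.
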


\begin{proof}
On each $T \in \mathcal{T}_h^i$, by Theorem \ref{thm4_5}, for every $u\in PC^2_{int}(T)$ we have
\begin{equation}
\label{}
| I^P_{h,T}u-u |_{k,T^s}\leqslant \sum_{i \in\mathcal{I}^{s'}}\left(\|E^s_i\|_{0,T^s}+\|F^s_i\|_{0,T^s} \right)|\phi^P_{i,T}|_{k,\infty,T^s} +\sum_{i\in \mathcal{I}}\|R^s_i\|_{0,T^s}|\phi^P_{i,T}|_{k,\infty,T^s},
\end{equation}
where $k=0,1$.
Then, applying Lemmas \ref{lem3_3}, \ref{lem3_5}, \ref{lem4_5} and Theorem \ref{thm4_2} for corresponding IFE functions to the inequality above,
we obtain \eqref{eq4_47}-\eqref{bilinear_local_est} for $u\in PC_{int}^2(T)$. Finally,
the density hypothesis \textbf{(H4)} shows that \eqref{eq4_47}-\eqref{bilinear_local_est} also hold for any $u\in PH^2_{int}(T)$.
\end{proof}

The local estimate in Theorem \ref{thm4_6} leads to the following global estimate for the IFE interpolation directly.
\begin{thm}
\label{thm5_7}
For any $u\in PH^2_{int}(\Omega)$, the following estimate of interpolation error holds
\begin{equation}
\label{eq4_49}
\| I^P_{h}u-u\|_{0,\Omega^s} + h| I^P_{h}u-u |_{1,\Omega^s}\leqslant \widetilde{C}^sh^2 (|u|_{1,\Omega}+|u|_{2,\Omega}), ~~ s=\pm.
\end{equation}
The constants $\widetilde{C}$ depending on $\kappa$ and $\rho$ are specified as the following:
\begin{itemize}
\item for the rotated-$Q_1$ and Crouzeix-Raviart IFE space,
\begin{equation}
\label{interp_estimates_Q1_CR}
\widetilde{C}^-=C\frac{1+\kappa}{\rho},~~~\widetilde{C}^+=C\frac{1+\kappa}{\rho^2},
\end{equation}
where $C$ depends on $\lambda$ for rotated-$Q_1$ case;
\item for the linear IFE space,
\begin{equation}
\label{interp_estimates_linear}
\widetilde{C}^-=C\frac{1+\kappa}{\rho},~~~\widetilde{C}^+=C\frac{1+\kappa}{\rho^{3/2}};
\end{equation}
\item for the bilinear IFE space,
\begin{equation}
\label{interp_estimates_linear}
\widetilde{C}^-=\widetilde{C}^+=C\frac{1+\kappa}{\rho}.
\end{equation}
\end{itemize}
\begin{proof}
Estimate \eqref{eq4_49} follows directly from combining the estimates \eqref{eq4_47} to \eqref{bilinear_local_est} and \eqref{eq4_32}.
\end{proof}
\end{thm}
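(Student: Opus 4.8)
The plan is to deduce the global estimate \eqref{eq4_49} from the local estimates already in hand by a standard summation over elements, exploiting that the global IFE interpolation is defined piecewise through \eqref{eq4_34}. First I would note that $\Omega^s$ is partitioned, up to a set of measure zero, by the pieces $T^s = T \cap \Omega^s$ as $T$ ranges over $\mathcal{T}_h$: a non-interface element lies entirely in one subdomain (so it contributes to exactly one of $\Omega^{\pm}$), while an interface element contributes both a $T^+$ and a $T^-$ piece. Consequently the $L^2$ norm and $H^1$ seminorm over $\Omega^s$ split additively,
\begin{equation*}
\|I_h^P u - u\|_{0,\Omega^s}^2 = \sum_{T \in \mathcal{T}_h} \|I_{h,T}^P u - u\|_{0,T^s}^2, \qquad |I_h^P u - u|_{1,\Omega^s}^2 = \sum_{T \in \mathcal{T}_h} |I_{h,T}^P u - u|_{1,T^s}^2 .
\end{equation*}

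Next, on each element I would insert the appropriate local bound: for $T\in\mathcal{T}_h^n$ lying in $\Omega^s$ the standard estimate \eqref{eq4_32}, and for $T\in\mathcal{T}_h^i$ the relevant inequality from Theorem \ref{thm4_6}, namely \eqref{eq4_47} in the rotated-$Q_1$ and Crouzeix--Raviart case, \eqref{linear_local_est} in the linear case, and \eqref{bilinear_local_est} in the bilinear case. Each of these controls the quantity $\|I_{h,T}^P u - u\|_{0,T^s} + h\,|I_{h,T}^P u - u|_{1,T^s}$ by $\widetilde{C}^s_T\, h^2\,(|u|_{1,T}+|u|_{2,T})$ for a local constant $\widetilde{C}^s_T$. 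Squaring, using $(a+b)^2 \le 2(a^2+b^2)$, summing over $T$, and invoking the additivity $\sum_{T} |u|_{k,T}^2 = |u|_{k,\Omega}^2$ for $k=1,2$ of the piecewise seminorms defined in Section 2, the right-hand side collapses into a multiple of $(|u|_{1,\Omega}+|u|_{2,\Omega})^2$; taking square roots and recombining the $L^2$ and $h$-weighted $H^1$ contributions then yields \eqref{eq4_49} with $\widetilde{C}^s = \max_{T}\widetilde{C}^s_T$.

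The only bookkeeping point is to identify this maximal constant with the one recorded in the statement. Since $\rho\le 1$ and $\kappa\ge 0$, the bare constant $C$ from the non-interface estimate \eqref{eq4_32} satisfies $C \le C(1+\kappa)/\rho \le C(1+\kappa)/\rho^{3/2}\le C(1+\kappa)/\rho^2$, so it is dominated by every interface-element constant appearing in Theorem \ref{thm4_6}; hence the maximum is attained on interface elements and reproduces exactly the stated $\widetilde{C}^{\pm}$. I would close by remarking that no separate density argument is needed here, since Theorem \ref{thm4_6} already holds on $PH^2_{int}(T)$ and the restriction of any $u\in PH^2_{int}(\Omega)$ to an element $T$ lies in $PH^2_{int}(T)$. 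Candidly, this final step is essentially bookkeeping; all the substantive analysis resides in the local estimates. The only place demanding a little care is verifying that summing the additive seminorms does not inflate the constant and that the non-interface contribution is genuinely absorbed, which follows from the uniform mesh size $h$ and the $h$-independent constants furnished by Theorem \ref{thm4_6} and \eqref{eq4_32}, so that no element-by-element blow-up can occur.
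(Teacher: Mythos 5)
Your proposal is correct and is simply a careful unpacking of what the paper's one-line proof asserts: the global bound follows from summing the squared local estimates of Theorem \ref{thm4_6} and \eqref{eq4_32} over the elements, using the additivity of the piecewise norms and the fact that the interface-element constants dominate the non-interface one. Your handling of the partition of $\Omega^s$ into the pieces $T^s$, the elementary inequality for the squares, and the observation that no further density argument is needed all match the intended argument.
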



\section{Numerical Examples}

In this section we use numerical examples to demonstrate the approximation capability of the IFE spaces by IFE interpolation and IFE solutions. In generating numerical results, all computations involving integrations on interface elements, such as the assemblage
of local matrices and vectors with IFE shape functions or assessing the errors with integral norms, are handled by the numerical quadratures based on
the transfinite mapping between the reference straight edge triangle/square and the physical curved edge triangles/quadrilaterals. More
details about quadrature techniques on curved-edge domains can be found in \cite{2009Kopriva,2011SevillaSonia}.

All the numerical results to be presented are generated in the domain $\Omega = (-1,1)\times (-1,1)$ in which the interface curve $\Gamma$ is a circle with radius $r_0=\pi/6.28$ which divides $\Omega$ into two subdomains $\Omega^-$ and $\Omega^+$ with
$$
\Omega^-=\{ (x,y): x^2+y^2 < r_0^2 \}.
$$
The function to be approximated is
\begin{equation}
\label{example_u}
u(x,y) =\left\{
\begin{aligned}
&\frac{1}{\beta^-}r^{\alpha},  &\; (x,y)\in \Omega^-,\\
&\frac{1}{\beta^+}r^{\alpha}+\left( \frac{1}{\beta^-} -\frac{1}{\beta^+} \right)r_0^{\alpha}, & \; (x,y) \in \Omega^+,
\end{aligned}
\right.
\end{equation}
where $r=\sqrt{x^2+y^2}$ and $\alpha=5$. It is easy to verify that $u$ satisfies the interface jump condition \eqref{eq1_3} and \eqref{eq1_4}. We note that this is the same interface problem for the numerical examples given in \cite{2015LinLinZhang}. Numerical examples presented here are generated with the bilinear IFE space developed in Section \ref{sec:IFE Spaces and Their Properties}, and we note that numerical results with other IFE spaces developed in Section \ref{sec:IFE Spaces and Their Properties} are similar which are therefore not presented in order to avoid redundancy.

Note that the curvature of the interface in this interface problem is $\kappa\approx2$.
Condition \eqref{rem_unisol_eq2_1} allows us to use $\bar{\kappa} \approx 0.031$. Then using $\epsilon \approx 0.4$ in
\eqref{h_small_assump} leads to a suggested bound for the mesh size $h \approx 0.0273$. Therefore, our numerical experiments presented in this
section are all on meshes whose sizes are not larger than $1/40 = 0.025$ which can sufficiently satisfy the conditions in the error estimation in the previous section. \\
\commentout{
clear
kappa_b = 0.031
lambda = 1 - 4*sqrt(2)*sqrt(kappa_b)
kappa = 2
ep = 0.4
[sqrt(kappa_b)/(sqrt(2)*(1 + (1-2*ep^2)^(-3/2)*kappa)), ep/kappa]
1./[20, 40]
}

\noindent
{\bf The convergence of IFE interpolation}: Table \ref{table:bilinearIterpolationError_1_10000} and Table \ref{table:bilinearIterpolationError_10000_1} present interpolation error
$u - I_hu$ in both the $L^2$ and the semi-$H^1$ norms over a sequence of meshes whose mesh size is $h$. In these tables, the rate is the estimated
values of $r$ such that $\|u - I_hu\|_{0,\Omega} \approx Ch^r$ or $|u - I_hu|_{1,\Omega} \approx Ch^r$ with numerical results generated on two consecutive meshes. The estimated
values for $r$ clearly demonstrate the optimal convergence of $I_hu$. We note that this example involves a coefficient $\beta$ with a jump quite large. Our numerical experiments show that these IFE spaces converge optimally also when $\beta$ has a moderate jump such as $\beta^- = 1, \beta^+ = 10$.



\begin{table}[H]
\begin{center}
\begin{tabular}{|c |c c|c c|}
\hline
$h$    & $\|u - I_hu\|_{0,\Omega}$ & rate   & $|u - I_hu|_{1,\Omega}$ & rate   \\ \hline
1/40   & 2.7681E-4                 &        & 1.4482E-2               & \\ \hline
1/80   & 7.2447E-4                 & 1.9339 & 7.4468E-3               & 0.9596 \\ \hline
1/160  & 1.8580E-5                 & 1.9632 & 3.7827E-3               & 0.9772 \\ \hline
1/320  & 4.7122E-6                 & 1.9793 & 1.9061E-3               & 0.9888 \\ \hline
1/640  & 1.1858E-6                 & 1.9906 & 9.5723E-4               & 0.9937 \\ \hline
1/1280 & 2.9744E-7                 & 1.9952 & 4.7965E-4               & 0.9969 \\ \hline
\end{tabular}
\end{center}
\caption{Interpolation errors and rates for the bilinear IFE function, $\beta^-=1$ and $\beta^+=10000$}
\label{table:bilinearIterpolationError_1_10000}
\end{table}

\begin{table}[H]
\begin{center}
\begin{tabular}{|c |c c|c c|}
\hline
$h$    & $\|u - I_hu\|_{0,\Omega}$ & rate   & $|u - I_hu|_{1,\Omega}$ & rate   \\ \hline
1/40   & 9.0663E-3                 &        & 4.3850E-1               &  \\ \hline
1/80   & 2.2680E-3                 & 1.9991 & 2.1939E-1               & 0.9991 \\ \hline
1/160  & 5.6711E-4                 & 1.9997 & 1.0971E-1               & 0.9998 \\ \hline
1/320  & 1.4179E-4                 & 1.9999 & 5.4859E-2               & 0.9999 \\ \hline
1/640  & 3.5447E-5                 & 2.0000 & 2.7430E-2               & 1.0000 \\ \hline
1/1280 & 8.8618E-6                 & 2.0000 & 1.3715E-2               & 1.0000 \\ \hline
\end{tabular}
\end{center}
\caption{Interpolation errors and rates for the bilinear IFE function, $\beta^-=10000$ and $\beta^+=1$}
\label{table:bilinearIterpolationError_10000_1}
\end{table}

\noindent
{\bf The convergence of the IFE solution}: Let $u_h$ be the IFE solution generated by the bilinear IFE space applied in the partially penalized method in
\cite{2015LinLinZhang} for the interface problem \eqref{eq1_1}-\eqref{eq1_4} where $f$ and $g$ are chosen such that $u$ given by \eqref{example_u} is its exact solution. The errors in the bilinear IFE solution generated by the symmetric partially penalized IFE (SPP IFE) method on a sequence of meshes are listed in
Tables \ref{table:bilinearIFESolutionError_SymPen1_10000} and \ref{table:bilinearIFESolutionError_SymPen10000_1}. The values of numerically estimated rate $r$ in these tables clearly indicate the optimal convergence of the bilinear IFE solution gauged in either the $L^2$ norm or $H^1$ norm. We also have carried out extensive numerical experiments by applying the IFE spaces developed in Section
\ref{sec:IFE Spaces and Their Properties} to the partially penalized IFE methods in \cite{2015LinLinZhang} with all the popular penalties, and we have
observed similar optimal convergence in the related IFE solution for this interface problem.

\begin{table}[H]
\begin{center}
\begin{tabular}{| c |c c|c c|}\hline
       & \multicolumn{2}{|c|}{SPP IFE}   & \multicolumn{2}{|c|}{SPP IFE}     \\ \hline
$h$    & $\|u-u_h\|_{0,\Omega}$ & rate   & $|u-u_h|_{1,\Omega}$    & rate    \\ \hline
1/40   & 3.7917E-4              &        & 1.5276E-2               &   \\ \hline
1/80   & 1.0409E-4              & 1.8650 & 7.9599E-3               & 0.9405  \\ \hline
1/160  & 2.5628E-5              & 2.0220 & 3.9096E-3               & 1.0257  \\ \hline
1/320  & 6.6828E-6              & 1.9392 & 1.9501E-3               & 1.0035  \\ \hline
1/640  & 1.7806E-6              & 1.9081 & 9.7745E-4               & 0.9964  \\ \hline
1/1280 & 4.0278E-7              & 2.1443 & 4.8374E-4               & 1.0148  \\ \hline
\end{tabular}
\end{center}
\caption{Errors in the bilinear IFE solution generated by the symmetric partially penalized IFE method, $\beta^-=1$, $\beta^+=10000$.}
\label{table:bilinearIFESolutionError_SymPen1_10000}
\end{table}

\begin{table}[H]
\begin{center}
\begin{tabular}{| c |c c|c c|}\hline
       & \multicolumn{2}{|c|}{SPP IFE}   & \multicolumn{2}{|c|}{SPP IFE}     \\ \hline
$h$    & $\|u-u_h\|_{0,\Omega}$ & rate   & $|u-u_h|_{1,\Omega}$    & rate    \\ \hline
1/40   & 1.0734E-2              &        & 4.4052E-1               &   \\ \hline
1/80   & 2.5715E-3              & 2.0616 & 2.1966E-1               & 1.0040  \\ \hline
1/160  & 6.2918E-4              & 2.0310 & 1.0974E-1               & 1.0012  \\ \hline
1/320  & 1.5709E-4              & 2.0019 & 5.4864E-2               & 1.0001  \\ \hline
1/640  & 4.0137E-5              & 1.9686 & 2.7431E-2               & 1.0000  \\ \hline
1/1280 & 9.8101E-6              & 2.0326 & 1.3715E-2               & 1.0000  \\ \hline
\end{tabular}
\end{center}
\caption{Errors in the bilinear IFE solution generated by the symmetric partially penalized IFE method, $\beta^-=10000$, $\beta^+=1$.}
\label{table:bilinearIFESolutionError_SymPen10000_1}
\end{table}


\end{document}